\def\op{\operatorname}
\def\mmod{\kern-1pt\operatorname{-mod}}
\newtheorem{Thm}{Theorem}[section]
\newtheorem{Lem}[Thm]{Lemma}
\newtheorem{Cor}[Thm]{Corollary}
\newtheorem{Prop}[Thm]{Proposition}
\newtheorem{Rem}[Thm]{Remark}
\newtheorem{Ex}[Thm]{Example}
\newtheorem{Cv}[Thm]{\bf Assumption}
\numberwithin{equation}{section}
\begin{document}

\title[Irreducible Modules with ${\bf B}$-stable Line]{Irreducible Modules of Reductive Groups with Borel-stable Line}

\author{Xiaoyu Chen}
\address{Department of Mathematics, Shanghai Normal University, 100 Guilin Road, Shanghai 200234,
P. R. China}
\email{gauss\_1024@126.com}


\subjclass[2010]{20G05}

\date{Feb 21, 2022}

\keywords{Reductive group, Abstract representation}

\begin{abstract}
Let $p$ be a prime number and $\Bbbk=\bar{\mathbb{F}}_p$, the algebraic closure of the finite field $\mathbb{F}_p$ of $p$ elements. Let ${\bf G}$ be a connected reductive group defined over $\mathbb{F}_p$ and ${\bf B}$ be a Borel subgroup of ${\bf G}$ (not necessarily  defined over $\mathbb{F}_p$).  We show that for each (one-dimensional) character $\theta$ of ${\bf B}$ (not necessarily rational), there is a unique (up to isomorphism) irreducible $\Bbbk{\bf G}$-module $\mathbb{L}(\theta)$ containing $\theta$ as a $\Bbbk{\bf B}$-submodule, and moreover, $\mathbb{L}(\theta)$ is isomorphic to a parabolic induction from a finite-dimensional irreducible $\Bbbk{\bf L}$-module for some Levi subgroup ${\bf L}$ of ${\bf G}$. Thus, we have classified and constructed all (abstract) irreducible $\Bbbk{\bf G}$-modules with ${\bf B}$-stable line (i.e. an one-dimensional $\Bbbk{\bf B}$-submodule). As a byproduct, we give a new proof of a result of Borel and Tits on the classification of finite-dimensional irreducible $\Bbbk{\bf G}$-modules.
\end{abstract}

\maketitle

\section*{Introduction}
The classification and construction of certain irreducible modules for a given group is one of the fundamental problems in representation theory. The representation theory of reductive algebraic groups (or  Lie groups) is a main steam of mathematics since it not only has deep connections with algebraic geometry, number theory, etc, but is also interesting in its own right. The representation theory gives important information on the structure of the algebraic (even geometric) objects (groups, algebras, varieties, schemes, $\cdots$).

Earlier attentions has focused on the rational representations of reductive algebraic groups, or representations of finite groups of Lie type. The famous conjecture of Lusztig predicts that character of irreducible rational module of algebraic groups in prime characteristic is determined by Kazhdan--Lusztig polynomials of affine Weyl groups (cf. \cite{L1} and \cite{L2}). Lusztig's conjecture turned out to be true if the characteristic of the base field is large enough (cf. \cite{AJ}). Fiebig showed that Lusztig's conjecture holds if the characteristic of the base field is bigger than an explicit number (cf. \cite{Fie}). Williamson showed that the conjecture is false for small characteristics (cf. \cite{Wil}). The characters of irreducible modules for finite reductive groups in characteristic zero were given in terms of the cohomology of Deligne-Lusztig varieties (cf. \cite{DL}), and the modular version of Deligne--Lusztig theory were given in \cite{BR1} and \cite{BR2}.

But it is still difficult to give a complete classification of all (abstract) irreducible representations for an reductive algebraic group. Even for some ``simple" objects such as $SL_2(\bar{\mathbb{F}}_p)$, the special linear group with coefficients in $\bar{\mathbb{F}}_p$, little was known for its irreducible abstract representations. In \cite{BT}, Borel and Tits gave the classification of finite-dimensional abstract irreducible $\Bbbk'$-representations of an algebraic $\Bbbk$-group, where $\Bbbk'$ is algebraically closed field and $\Bbbk$ is an infinite field.

Recently, some progress has been made in the construction of infinite-dimensional abstract irreducible modules of reductive groups with Frobenius maps (for example, $SL_n(\bar{\mathbb{F}}_p)$, $SO_{2n}(\bar{\mathbb{F}}_p)$, $SO_{2n+1}(\bar{\mathbb{F}}_p)$, $Sp_{2n}(\bar{\mathbb{F}}_p)$). Let ${\bf G}$ be a connected reductive group defined over $\mathbb{F}_p$. In \cite{Xi}, Nanhua Xi constructed certain irreducible modules by taking the union of a system irreducible representations of finite subgroups of ${\bf G}$. In particular, some classical results of Steinberg and Deligne-Lusztig on
complex representations of finite groups of Lie type are extended to infinite reductive algebraic groups with Frobenius maps. It was proved in \cite{Xi} that the direct limit of a system irreducible representations of certain finite subgroups of ${\bf G}$ is still irreducible. In particular, one can take the union of Steinberg modules for finite reductive groups to get an infinite-dimensional Steinberg module.

However, in general, an irreducible $\Bbbk{\bf G}$-module may not be a union of irreducible modules of finite subgroups of ${\bf G}$. It was proved in \cite{Yang} that the infinite-dimensional Steinberg module is always irreducible, although the Steinberg modules for finite reductive groups may not be irreducible in non-defining characteristic. The author has made first attempts to find some irreducible $\Bbbk{\bf G}$-modules for $\Bbbk=\bar{\mathbb{F}}_p$ in unpublished notes \cite{C1} and \cite{C2}. Let ${\bf B}$ be a Borel subgroup of ${\bf G}$, and $\operatorname{tr}$ the trivial character of ${\bf B}$. It was proved in \cite{CD} (resp. \cite{CD2}) that the induced module $\Bbbk{\bf G}\otimes_{\Bbbk{\bf B}}\operatorname{tr}$ has a composition series whose subquotients are indexed by the subsets of simple reflections in Weyl groups if the characteristic of $\Bbbk$ is not equal to $p$ (resp. equal to $p$), although the composition factors of its finite version $\Bbbk G(\mathbb{F}_{p^a})\otimes_{\Bbbk B(\mathbb{F}_{p^a})}\operatorname{tr}$ might be complicated. This is a surprising and new phenomenon in the representation theory of infinite reductive groups. Later, in \cite{CD3}, the composition factors of abstract induced modules from any character of ${\bf B}$ in cross characteristic and some such induced modules in defining characteristic are determined.

Let $\Bbbk=\bar{\mathbb{F}}_p$, the algebraic closure of finite field $\mathbb{F}_p$ of $p$ elements, and $\theta$ be a (one-dimensional) $\Bbbk$-character (possibly not be rational) of ${\bf B}$. In this paper, we show that the induced module $\Bbbk{\bf G}\otimes_{\Bbbk{\bf B}}\theta$ has a unique simple quotient (although this module may have infinitely many composition factors as pointed out in \cite{CD3},  and its ``finite version" $\Bbbk G(\mathbb{F}_{p^a})\otimes_{\Bbbk B(\mathbb{F}_{p^a})}\theta$ may be decomposable in general, with indecomposable summands was given in \cite{SW} and \cite{YY}). Moreover, this simple quotient is isomorphic to a parabolic induction from a finite-dimensional irreducible module for some Levi subgroup of ${\bf G}$. Meanwhile, a result of Borel and Tits (\cite[Theorem 10.3]{BT}) says that any finite-dimensional representations of ${\bf G}$ is isomorphic to a twist tensor product of irreducible rational representations. Thus, our result means that {\bf the rational irreducible modules ``control" the size of all irreducible $\Bbbk{\bf G}$-modules with ${\bf B}$-stable line}.

This paper is organized as follows: In Section 1, we recall some basic facts on the structure theory and representation theory of reductive algebraic groups. In Section 2, we give a series of key observations which lead to the proof of main theorem. Section 3 is devoted to proving a series of main results (Theorem \ref{hd=sim}, Corollary \ref{unique}, Proposition \ref{ifpart}, Theorem \ref{paraind}, Corollary \ref{IndPG}). Combining these results gives a complete classification and construction of $\Bbbk{\bf G}$-modules with ${\bf B}$-stable line.

\smallskip
\noindent{\bf Acknowledgements.} I would like to thank
Professor Nanhua Xi for his helpful suggestions and comments in
writing this paper. I thank
Professor Jianpan Wang and Professor Naihong Hu for their valuable advices. I also thank Junbin Dong for enlightening discussion with him.

%
%

\section{Reductive Groups and Representations}
In this section, we briefly recall some notation and basic structure theory of reductive algebraic groups, together with their rational representations and abstract representations (see \cite{Car}, \cite{Jan2}, \cite{Xi} for details).

\subsection{Reductive groups and abstract representations}
Let $p$ be a prime number and $\Bbbk=\bar{\mathbb{F}}_p$, the algebraic closure of the finite field $\mathbb{F}_p$ with $p$ elements. Let ${\bf G}$ be a connected reductive algebraic group defined over $\mathbb{F}_p$ with the standard Frobenius map $F$. For any closed subgroup ${\bf H}$ of ${\bf G}$ defined over $\mathbb{F}_p$, we denote $H_n$ the $\mathbb{F}_{p^{n!}}$-point of ${\bf H}$ (here we use factorial for technical convenience, i.e. $H_m\subset H_n$ if $m<n$). Since $\Bbbk=\bigcup_{n\in\mathbb{N}^*}\mathbb{F}_{p^{n!}}$, we have $H_m\subset H_n$ if $m<n$, and ${\bf H}=\bigcup_{n>0}H_n$. For any finite subset $E$ of ${\bf G}$, we denote $\underline{E}=\sum_{h\in E}h\in\Bbbk{\bf G}$.

Let ${\bf B}$ be a Borel subgroup of ${\bf G}$, and $\theta$ be a (one-dimensional) character of ${\bf B}$ and ${\bf 1}_{{\bf B},\theta}$ be a nonzero vector in the corresponding one-dimensional space. As in \cite{Xi}, define the abstract induced module $\mathbb{M}_{\bf B}(\theta):=\Bbbk{\bf G}\otimes_{\Bbbk{\bf B}}\theta$. For any $g\in{\bf G}$, write $^g{\bf B}=g{\bf B}g^{-1}$. It is easy to check that there is a $\Bbbk{\bf G}$-module isomorphism $\mathbb{M}_{\bf B}(\theta)\simeq\mathbb{M}_{^g{\bf B}}(\theta')$ sending ${\bf 1}_{{\bf B},\theta}$ to $g^{-1}\otimes{\bf 1}_{^g{\bf B},\theta'}$, where $\theta'$ is the character of $^g{\bf B}$ obtained by twisting $\theta$ by the conjugation $\operatorname{Int}g^{-1}:^g{\bf B}\rightarrow{\bf B}$. Therefore, it is enough to study $\mathbb{M}_{\bf B}(\theta)$ for a fixed ${\bf B}$ since any two Borel subgroups are conjugate.

We can assume that ${\bf B}$ is a Borel subgroup defined over $\mathbb{F}_p$ without loss of generality by the above discussion, and ${\bf T}$ a maximal torus in ${\bf B}$ defined over $\mathbb{F}_p$ (Lang--Steinberg Theorem (cf, \cite[10]{St2}) implies the existence of such ${\bf B}$ and ${\bf T}$). In particular, ${\bf U}=R_u({\bf B})$, the unipotent radical of ${\bf B}$, is defined over $\mathbb{F}_p$, and ${\bf B}={\bf T}\ltimes{\bf U}$. We denote $\Phi=\Phi({\bf G};{\bf T})$ the corresponding root system, and $\Phi^+$ (resp. $\Phi^-$) is the set of positive (resp. negative) roots determined by ${\bf B}$. For any $w\in W$, let $\Phi_w^-=\{\alpha\in\Phi^+\mid w\alpha\in\Phi^-\}$ and $\Phi_w^+=\{\alpha\in\Phi^+\mid w\alpha\in\Phi^+\}$. Let $W=N_{\bf G}({\bf T})/{\bf T}$ be the corresponding Weyl group. For each $w\in W$, let $\dot{w}$ be a representative in $N_{\bf G}({\bf T})$. It is well known that $B={\bf B}$ and $N=N_{\bf G}({\bf T})$ form a $BN$-pair of ${\bf G}$. In particular, for each $w\in W$, ${\bf U}$ has two subgroups ${\bf U}_w$ and ${\bf U}_w'$ such that ${\bf U}={\bf U}_w'{\bf U}_w$ and $\dot{w}{\bf U}_w'\dot{w}^{-1}\subset{\bf U}$. The Bruhat decomposition says that ${\bf G}$ is a disjoint union of the double cosets ${\bf B}\dot{w}{\bf B}={\bf U}_{w^{-1}}\times\{\dot{w}\}\times{\bf B}$ $(w\in W)$. The above results also hold for their $\mathbb{F}_{p^{a!}}$-points: $G_a, B_a, U_a, U_{w,a}, U_{w,a}'$ under the following assumption:
\begin{Cv}
We assume that all representatives of the elements of $W$ involved are in $G_1$ without loss of generality. $($Otherwise, one replaces $G_1$ with some $G_n$ containing them. This does no harm to the result.$)$
\end{Cv}
Since $U_{w,a}$ is a finite $p$-group, \cite[Proposition 26]{Se} implies that
\begin{Lem}\label{fixedpt}
Any nonzero $\Bbbk U_{w,a}$-module has a nonzero $U_{w,a}$-fixed point.
\end{Lem}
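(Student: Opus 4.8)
The plan is to establish the standard general fact behind this statement: if $P$ is any finite $p$-group and $\Bbbk$ is a field of characteristic $p$, then every nonzero $\Bbbk P$-module has a nonzero $P$-fixed vector; taking $P=U_{w,a}$ then gives Lemma \ref{fixedpt}. Since the modules in question need not be finite-dimensional, the first step is a reduction to the finite-dimensional case: given a nonzero $v$ in a $\Bbbk P$-module $V$, the submodule $\Bbbk P\cdot v$ is spanned by the finite set $\{g\cdot v\mid g\in P\}$, hence is finite-dimensional, and any $P$-fixed vector it contains is $P$-fixed in $V$; so one may assume $\dim_\Bbbk V<\infty$.

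Next I would induct on $|P|$. The case $P=\{1\}$ is trivial. If $P\neq\{1\}$, its center is nontrivial (a nontrivial finite $p$-group has nontrivial center), so $P$ has a central subgroup $Z=\langle z\rangle$ of order $p$. The crucial observation is that $z-1$ acts nilpotently on $V$: since $z^p=1$ and $\operatorname{char}\Bbbk=p$, one has $(z-1)^p=z^p-1=0$ in $\operatorname{End}_\Bbbk(V)$, because all intermediate binomial coefficients $\binom{p}{i}$ with $0<i<p$ vanish in $\Bbbk$. Therefore $\ker(z-1)=V^Z$ is nonzero. As $Z$ is central, $V^Z$ is a $\Bbbk P$-submodule on which $Z$ acts trivially, so it is a nonzero module for the strictly smaller $p$-group $P/Z$; by the inductive hypothesis it has a nonzero $P/Z$-fixed vector, which is then a nonzero $P$-fixed vector of $V$.

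This is precisely the content of \cite[Proposition 26]{Se}, so there is no real obstacle to overcome; the only mild subtlety is that the modules relevant to this paper are typically infinite-dimensional, which is why the reduction to finite dimension is worth making explicit — and that reduction is itself immediate from the finiteness of $U_{w,a}$.
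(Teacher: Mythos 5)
Your proof is correct and rests on exactly the same fact the paper invokes: the paper simply notes that $U_{w,a}$ is a finite $p$-group and cites \cite[Proposition 26]{Se}, whereas you unwind the standard self-contained argument behind that citation (reduction to the finite-dimensional submodule $\Bbbk P\cdot v$, induction on $|P|$ via a central element $z$ of order $p$, and nilpotency of $z-1$ since $(z-1)^p=z^p-1=0$ in characteristic $p$). Since you explicitly identify your argument with Serre's proposition, the two approaches are substantively the same.
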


For each character $\theta$ of ${\bf B}$,  we abbreviate $\mathbb{M}(\theta)$ for $\mathbb{M}_{\bf B}(\theta)$, ${\bf 1}_\theta$ for ${\bf 1}_{{\bf B},\theta}$ (due to the fixed ${\bf B}$), and $x{\bf 1}_\theta$ for $x\otimes{\bf 1}_\theta\in\mathbb{M}(\theta)$. It is clear that ${\bf U}$ acts trivially on ${\bf 1}_\theta$ since ${\bf U}=[{\bf B},{\bf B}]$. The Bruhat decomposition implies that $\mathbb{M}(\theta)=\sum_{w\in W}\Bbbk{\bf U}_{w^{-1}}\dot{w}{\bf 1}_\theta$.

Let $\Delta=\{\alpha_i|i\in I\}$ be the set of simple roots in $\Phi^+$ and $s_i\in W$ $(i\in I)$ corresponding simple reflections. For each $\alpha\in\Phi$, let ${\bf U}_\alpha$ be the corresponding root subgroup of ${\bf G}$. Denote ${\bf G}_i$ the subgroup of ${\bf G}$ generated by ${\bf U}_{\alpha_i}$ and ${\bf U}_{-\alpha_i}$, ${\bf T}_i={\bf T}\cap{\bf G}_i$, and ${\bf T}'$ be the subgroup of ${\bf T}$ generated by ${\bf T}_i$ $(i\in I)$. For each $J\subset I$, let ${\bf G}_J$ be the subgroup of ${\bf G}$ generated by ${\bf G}_i$ ($i\in J$), and $W_J$ be the subgroup of $W$ generated by $s_i$ $(i\in J)$, $W^J$ the set of distinguished representatives of left cosets (i.e. representatives with minimal length in left cosets)  of $W_J$ in $W$, and ${\bf P}_J$ the subgroup of ${\bf G}$ generated by ${\bf B}$ and $\dot{s_i}$ $(i\in J)$. The parabolic version of Bruhat decomposition says that ${\bf G}$ is a disjoint union of the double cosets ${\bf U}_{w^{-1}}\times\{\dot{w}\}\times{\bf P}_J$ $(w\in W^J)$. The same holds for their $\mathbb{F}_{p^{a!}}$-points: $G_a, P_{J,a}$.

For each $\alpha\in\Phi$, we fix an isomorphism $\varepsilon_\alpha: \Bbbk\rightarrow{\bf U}_\alpha$ such that $t\varepsilon_\alpha(c)t^{-1}=\varepsilon_\alpha(\alpha(t)c)$ for any $t\in{\bf T}$ and $c\in\Bbbk$. Set $U_{\alpha,a}=\varepsilon_\alpha(\mathbb{F}_{p^{a!}})$. For each $i\in I$, let ${\bf G}_i$ be the subgroup of ${\bf G}$ generated by ${\bf U}_{\alpha_i}$ and ${\bf U}_{-\alpha_i}$. We fix a homomorphism  $\varphi_i: SL_2(\Bbbk)\rightarrow{\bf G}_i$ such that
$$\varphi_i\left(\begin{array}{cc}1 &\ t\\0 &\ 1\end{array}\right)=\varepsilon_{\alpha_i}(t),~~\varphi_i\left(\begin{array}{cc}1 &\ 0\\t &\ 1\end{array}\right)=\varepsilon_{-\alpha_i}(t),$$
and for $t\in\Bbbk^*$ and $i\in I$, write
$$h_i(t):=\varphi_i\left(\begin{array}{cc}t &\ 0\\0 &\ t^{-1}\end{array}\right),~~\dot{s_i}=\varphi_i\left(\begin{array}{cc}0 &\ 1\\-1 &\ 0\end{array}\right).$$
An easy calculation shows that
\begin{equation}\label{sus}
\dot{s_i}^{-1}\varepsilon_{\alpha_i}(t)\dot{s_i}=
\varepsilon_{\alpha_i}(-t^{-1})\dot{s_i}h_i(t)\varepsilon_{\alpha_i}(-t^{-1})\quad t\in\Bbbk^*.
\end{equation}

We also have the following well known commutator relations. Namely, given $\alpha,\beta\in\Phi^+$,  there exist a total ordering on $\Phi^+$, and integers $c^{mn}_{\alpha \beta}$ such that
\begin{equation}\label{commrel}
[\varepsilon_\alpha(a),\varepsilon_\beta(b)]:=\varepsilon_\alpha(a)\varepsilon_\beta(b)\varepsilon_\alpha(a)^{-1}\varepsilon_\beta(b)^{-1}=
\underset{m,n>0}{\prod} \varepsilon_{m\alpha+n\beta}(c^{mn}_{\alpha \beta}a^mb^n)
\end{equation}
for all $a,b\in\Bbbk$, where the product is over all
$m,n\in\mathbb{N}^*$ such that $m\alpha+n\beta \in \Phi^{+}$, taken
according to the chosen ordering.

For any abelian group $A$, denote $\widehat{A}$ the group of $\Bbbk$-characters of $A$. For convenience, {\it we always regard $\widehat{A}$ as an additive group throughout the paper}. One has the restriction map $\pi:\widehat{\bf T}\rightarrow\widehat{{\bf T}'}$. Since $\Bbbk^*$ is a divisible abelian group, it follows that $\Bbbk^*$ is an injective abelian group. Thus, one obtains

\begin{Lem}\label{surj}
$\pi$ is surjective.
\end{Lem}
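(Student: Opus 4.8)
The plan is to deduce the statement from the injectivity of $\Bbbk^{*}$ as an abelian group. Write $\widehat{A}=\operatorname{Hom}_{\mathbb{Z}}(A,\Bbbk^{*})$ for any abelian group $A$, and regard $\pi$ as the map $\operatorname{Hom}_{\mathbb{Z}}({\bf T},\Bbbk^{*})\to\operatorname{Hom}_{\mathbb{Z}}({\bf T}',\Bbbk^{*})$ induced by the inclusion of abelian groups ${\bf T}'\hookrightarrow{\bf T}$ (recall ${\bf T}'$ is the subgroup of ${\bf T}$ generated by the ${\bf T}_i$). Surjectivity of $\pi$ then says precisely that every group homomorphism $\theta'\colon{\bf T}'\to\Bbbk^{*}$ extends to a homomorphism ${\bf T}\to\Bbbk^{*}$, which is the defining extension property of an injective $\mathbb{Z}$-module.

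First I would check that $\Bbbk^{*}=\bar{\mathbb{F}}_{p}^{*}$ is divisible, i.e.\ that multiplication by every positive integer $n$ is surjective on $\Bbbk^{*}$. Since $\Bbbk^{*}$ is a torsion group it suffices to treat the case $n=\ell$ a prime: for $\ell\neq p$ and $x\in\Bbbk^{*}$, the polynomial $Y^{\ell}-x$ has a root in the algebraically closed field $\Bbbk$, and that root is again a unit; for $\ell=p$, the Frobenius $y\mapsto y^{p}$ is a bijection of $\bar{\mathbb{F}}_{p}$, hence surjective on $\Bbbk^{*}$. Then I would invoke the classical fact (Baer's criterion, or directly the theorem that over a principal ideal domain divisible modules are injective) that a divisible abelian group is an injective $\mathbb{Z}$-module.

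Finally, applying the exact functor $\operatorname{Hom}_{\mathbb{Z}}(-,\Bbbk^{*})$ to the short exact sequence $0\to{\bf T}'\to{\bf T}\to{\bf T}/{\bf T}'\to 0$ — or simply unwinding the extension property — yields that the restriction map $\widehat{\bf T}\to\widehat{{\bf T}'}$ is onto, which is the assertion. I do not expect a genuine obstacle here; the only points deserving a word of care are that we work with \emph{abstract} (not necessarily rational) characters, so the argument must stay at the level of abelian groups and cannot appeal to the structure theory of tori as algebraic groups, and the verification of divisibility of $\bar{\mathbb{F}}_{p}^{*}$ in residue characteristic $p$, which is handled by the Frobenius as above.
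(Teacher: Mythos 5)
Your proposal is correct and follows essentially the same route as the paper: the paper also deduces surjectivity of the restriction map $\widehat{\bf T}\rightarrow\widehat{{\bf T}'}$ from the fact that $\Bbbk^*$ is divisible, hence an injective abelian group, so every character of ${\bf T}'$ extends to ${\bf T}$. Your additional verification of divisibility of $\bar{\mathbb{F}}_p^*$ (separating the cases $\ell\neq p$ and $\ell=p$ via Frobenius) simply fills in a step the paper takes for granted.
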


\subsection{Rational Representations of ${\bf G}$ and representations of $G_n$}
Let $X({\bf T})$ be the group of rational characters of ${\bf T}$, and $Y({\bf T})$ be the set of algebraic group homomorphisms $\Bbbk^*\rightarrow{\bf T}$. There is a pairing $\langle\cdot,\cdot\rangle: X({\bf T})\times Y({\bf T})\rightarrow\mathbb{Z}$ such that for any $\lambda\in X$ and $\mu\in Y({\bf T})$, $\lambda\circ\mu(t)=t^{\langle\lambda,\mu\rangle}$ $(t\in\Bbbk^*)$. For each $\alpha\in\Phi$, let $\alpha^\vee$ be the coroot in $Y({\bf T})$ such that $\langle\alpha,\alpha^\vee\rangle=2$. A $\lambda\in X({\bf T})$ is called a dominant weight if $\langle\lambda,\alpha_i^\vee\rangle\ge0$ for any $i\in I$. Denote by $X({\bf T})^+$ for the set of dominant weights. For any $\lambda\in X({\bf T})^+$, there is a unique highest weight module $V(\lambda)$ called a Weyl module, which is universal in the sense that if $M$ is a
highest weight module with highest weight $\lambda$, there is a unique, up to scalar, surjection
$V(\lambda)\rightarrow M$. One denotes $\nabla(\lambda)=V(-w_0\lambda)^*$ (this is called a co-standard module). It is well known that $\operatorname{Soc}\nabla(\lambda)$ is simple (denoted $L(\lambda)$), and each irreducible rational ${\bf G}$-module is isomorphic to some $L(\lambda)$ $(\lambda\in X({\bf T})^+)$.

 Let $X_n({\bf T})=\{\lambda\in X({\bf T})^+\mid\langle\lambda,\alpha_i^\vee\rangle<p^{n!}\}$. The classification of irreducible $\Bbbk G_n$-modules is summarized in the following theorem which was first proved in full generality by Steinberg in \cite{St3}.
 \begin{Thm}\label{restrictionthm}
 Assume that ${\bf G}$ is semisimple. Then

 $(\op{i})$ For each $\lambda\in X_n({\bf T})$, $L(\lambda)$ remains irreducible when restricted to $G_n$;

 $(\op{ii})$ Any irreducible $\Bbbk G_n$-module is of the form $L(\lambda)$ for some $\lambda\in X_n({\bf T})$.
 \end{Thm}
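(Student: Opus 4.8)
The plan is to follow (a streamlined form of) Steinberg's original argument in \cite{St3} (see also \cite[II.3]{Jan2}): a ``generation plus maximal vector'' step yields $(\op{i})$, and a counting step yields $(\op{ii})$. Write $q=p^{n!}$, so $X_n({\bf T})=\{\lambda\in X({\bf T})^+\mid\langle\lambda,\alpha_i^\vee\rangle<q,\ i\in I\}$, and let ${\bf U}^-$ be the subgroup of ${\bf G}$ generated by the root subgroups ${\bf U}_{-\alpha}$, $\alpha\in\Phi^+$. For $(\op{i})$, fix $\lambda\in X_n({\bf T})$ and a highest weight vector $v^+\in L(\lambda)$; I would prove (a) $L(\lambda)=\Bbbk U_n^-\,v^+$, and (b) every nonzero $\Bbbk G_n$-submodule $M$ of $L(\lambda)$ contains $v^+$ — together these force $M=L(\lambda)$. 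For (b): $M$ is a nonzero $\Bbbk U_n$-module, so $M^{U_n}\neq 0$ by Lemma \ref{fixedpt}, and since $T_n$ normalises $U_n$ with $\op{ord}(T_n)$ prime to $p$, the $T_n$-module $M^{U_n}$ is semisimple and contains a $T_n$-eigenvector $v$. One argues $v\in\Bbbk v^+$: expanding each $\varepsilon_\alpha(t)$ ($\alpha\in\Phi^+$, $t\in\mathbb F_q$) in divided powers in $\op{Dist}({\bf G})$ and using $\varepsilon_\alpha(t)v=v$, one compares ${\bf T}$-weight components to see that $v$ is killed by all positive divided powers, hence is a maximal vector of the simple $\op{Dist}({\bf G})$-module $L(\lambda)$, hence $v\in\Bbbk v^+$; here $\langle\lambda,\alpha_i^\vee\rangle<q$ is exactly the input that keeps the weight multiplicities under control when distinct ${\bf T}$-weights restrict to the same character of $T_n$ — concretely one reduces to the $p$-restricted factors via Steinberg's tensor product theorem $L(\lambda)\cong L(\lambda_0)\otimes L(\lambda_1)^{[1]}\otimes\cdots\otimes L(\lambda_{n!-1})^{[n!-1]}$ (with $\lambda=\sum_j p^j\lambda_j$, each $\lambda_j$ $p$-restricted), or equivalently relates $L(\lambda)$ to the simple modules of the $n!$-th Frobenius kernel. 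Part (a) is of the same nature: $v^+$ generates $L(\lambda)$ over $\op{Dist}({\bf G})$, hence over $\Bbbk{\bf U}^-=\bigcup_m\Bbbk U_m^-$, and $q$-restrictedness forces this increasing chain of subspaces to stabilise already at $m=n$.

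For $(\op{ii})$: by $(\op{i})$, $L(\lambda)|_{G_n}$ is simple with $(L(\lambda))^{U_n}=\Bbbk v^+$ a unique $B_n$-stable line carrying the $T_n$-character $\lambda|_{T_n}$, so we get a well-defined map $X_n({\bf T})\to\{\text{simple }\Bbbk G_n\text{-modules}\}/{\cong}$, $\lambda\mapsto L(\lambda)|_{G_n}$. It is injective: an isomorphism $L(\lambda)|_{G_n}\cong L(\mu)|_{G_n}$ identifies these lines, so $\lambda|_{T_n}=\mu|_{T_n}$, which for $q$-restricted weights forces $\lambda=\mu$ (with the usual care at the largest $q$-restricted weight, where one must use $<q$ rather than $<q-1$). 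For surjectivity one counts: the number of isomorphism classes of simple $\Bbbk G_n$-modules equals the number of $p$-regular (i.e. semisimple) conjugacy classes of $G_n$, and this equals $\abs{X_n({\bf T})}$ (Steinberg's count of semisimple classes; $=q^{\abs I}$ when ${\bf G}$ is simply connected). Hence the $\abs{X_n({\bf T})}$ pairwise non-isomorphic modules $L(\lambda)|_{G_n}$ exhaust the simple $\Bbbk G_n$-modules, which is $(\op{ii})$.

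The main obstacle is the surjectivity in $(\op{ii})$, i.e. the count of semisimple classes of $G_n$: this is the one place where the fine arithmetic of ${\bf G}$ over $\mathbb F_q$ genuinely enters (Bruhat decomposition over finite fields, Steinberg's theorem on semisimple classes), whereas the generation/maximal-vector step and the injectivity are comparatively formal once the tensor product theorem and Lemma \ref{fixedpt} are in hand. A variant that sidesteps the class count is to prove directly that a tensor product of Frobenius-twisted simple $\Bbbk G_n$-modules is again simple, thereby reducing $(\op{ii})$ to the classification of simple $\Bbbk G_n$-modules with $p$-restricted highest $T_n$-weight; but this merely relocates the difficulty into that classification.
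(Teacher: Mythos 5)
The paper does not prove Theorem \ref{restrictionthm}: it states it as a known result and cites Steinberg \cite{St3} (``which was first proved in full generality by Steinberg''), so there is no in-paper argument to compare your sketch against. That said, your outline is the standard one (Steinberg, Curtis, Cline--Parshall--Scott, Jantzen): for $(\op{i})$, generation $L(\lambda)=\Bbbk U_n^-v^+$ plus $\dim L(\lambda)^{U_n}=1$ via Lemma \ref{fixedpt} and the divided-power/maximal-vector analysis; for $(\op{ii})$, injectivity from the uniqueness of the $B_n$-stable line together with a count of $p$-regular classes. Within $(\op{i})$, the two claims you label (a) and (b) are exactly where the $q$-restriction does the work and where the real content lies; your sketch points in the right direction but the weight-string bookkeeping (or, equivalently, the reduction via Steinberg's tensor product theorem to $p$-restricted $\lambda$ and the Frobenius kernel) would have to be carried out to make it a proof rather than a plan.

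One substantive caveat, which your own parenthetical ``$=q^{|I|}$ when ${\bf G}$ is simply connected'' already signals: the surjectivity count in $(\op{ii})$ genuinely requires ${\bf G}$ simply connected. Steinberg's formula for the number of semisimple classes of $G_n$ and the equality with $|X_n({\bf T})|$ both fail for non--simply-connected semisimple groups; already for ${\bf G}=PGL_2$ one has $|X_n({\bf T})|\approx q/2$ while $PGL_2(\mathbb F_q)$ has on the order of $q$ semisimple classes (some $F$-stable classes split because centralizers of semisimple elements are disconnected in the adjoint group). So the theorem as literally stated, with only ``${\bf G}$ semisimple'', over-claims in part $(\op{ii})$ and your proof strategy would not close the gap in that generality. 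This does not affect the paper --- every application (Proposition \ref{ifpart}, Remark \ref{r}) passes to $\mathcal D{\bf G}$ assumed simply connected, or to its simply connected cover $\widetilde{\bf G}$ --- but it is worth flagging if one reads the theorem statement at face value.
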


\begin{Ex}
\normalfont
In the case ${\bf G}=SL_2(\Bbbk)$, $X({\bf T})$ and $X({\bf T})^+$ are identified with $\mathbb{Z}$ and $\mathbb{N}$, respectively, and for any $n\in\mathbb{N}$, $\nabla(n)$ is identified with space of polynomials in two variables of degree $n$. For each $n\in\mathbb{N}$, By \cite[II Prop 5.2]{Jan1}, there is a basis $v_0,\cdots,v_n$ of $\nabla(n)$ such that
\begin{equation}\label{H0}
\left(\begin{array}{cc}1 &\ t\\0 &\ 1\end{array}\right)v_i=\sum_{j=0}^i\left(i\atop j\right)a^{i-j}v_j\quad\forall t\in\Bbbk,
\end{equation}
and $L(n)$ is spanned by all $v_i$ with $\displaystyle\left(n\atop i\right)\ne0$ $(\operatorname{mod}p)$.

The situation of $PGL_2(\Bbbk)$ is the same as that of $SL_2(\Bbbk)$ except that $X({\bf T})$ and $X({\bf T})^+$ are identified with $2\mathbb{Z}$ and $2\mathbb{N}$, respectively.
\end{Ex}

\section{Key Observations}
In this section, we give some elementary observation to be used in the next section.

For each $n\in\mathbb{N}$, let $f(n)$ be the sum of coefficients in the $p$-adic expansion of $n$. Let $M_n$ be the number of nonzero coefficients in the $p$-adic expansion of $n$.
\begin{Lem}\label{cor33}
Let $r\in\mathbb{N}^*$, $r>1$, and $m,m'\in\mathbb{N}$. Assume that $q=p^r$, $0\le m\le q-1$ and $m'\equiv m$ $(\operatorname{mod}q-1)$. Let $m=\sum_{i=0}^{r-1}m_ip^i$ and $m'=\sum_jm_j'p^j$ be their $p$-adic expansion. Set $r_i'=\sum_{j\equiv i(\operatorname{mod}r)}m'_j$. Then $f(m')\ge f(m)$ and the equality holds if and only if $r_i'=m_i$ for all $i$. In particular, if $f(m')=f(m)$, then $M_{m'}\ge M_m$.
\end{Lem}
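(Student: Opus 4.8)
The plan is to reduce everything to a statement about $p$-adic digits and then exploit the ``carry'' behaviour of addition modulo $q-1$. Write $m' \equiv m \pmod{q-1}$ with $0 \le m \le q-1$. The key elementary fact is that reducing an integer modulo $q-1 = p^r - 1$ corresponds, on the level of $p$-adic digits grouped into blocks of length $r$, to \emph{summing} those blocks: if $n = \sum_j n_j p^j$ then $n \equiv \sum_i r_i(n) p^i \pmod{q-1}$ where $r_i(n) = \sum_{j \equiv i \ (\mathrm{mod}\ r)} n_j$, simply because $p^{r} \equiv 1$, hence $p^j \equiv p^{j \bmod r} \pmod{q-1}$. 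So from $m' \equiv m$ we get $m \equiv \sum_{i=0}^{r-1} r_i' p^i \pmod{q-1}$, where $r_i' := r_i(m')$ as in the statement.

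The first step is to observe $f(m') = \sum_j m_j' = \sum_{i=0}^{r-1} r_i'$, since grouping the digits of $m'$ into residue classes mod $r$ just repartitions the same multiset of digit-values; this is where $f$ enters. So the inequality to prove becomes $\sum_i r_i' \ge \sum_i m_i = f(m)$. Now I would argue by the standard ``digit-sum drops under addition'' principle: for nonnegative integers $a, b$ one has $f(a+b) \le f(a) + f(b)$, with equality iff the base-$p$ addition $a+b$ involves no carries. Since $\sum_i r_i' p^i$ is a nonnegative integer congruent to $m$ modulo $q-1$, and $0 \le m \le q-1$, one checks that $\sum_i r_i' p^i$ equals $m$ plus a nonnegative multiple of $q-1$; writing $q - 1 = \sum_{i=0}^{r-1}(p-1)p^i$ and noting $f(q-1) = r(p-1)$ while $f$ of a multiple $k(q-1)$ need not be $k r (p-1)$ in general, I would instead directly compare digit sums of the two integers $\sum r_i' p^i$ and $m$ that are congruent mod $q-1$: any two nonnegative integers congruent mod $q - 1$ have digit sums congruent mod $q-1$ is too weak, so the cleaner route is: reduce $\sum r_i' p^i$ again mod $q-1$ back into $[0, q-1]$ by the same block-summing operation, which can only decrease (or preserve) the digit sum and lands exactly on $m$ (since $m \in [0, q-1]$, it is the unique such representative unless $m = 0$, handled separately, or $m = q-1$). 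Thus $f(m') = \sum_i r_i' \ge f(m)$, with equality precisely when this last reduction introduces no carries, i.e. when already $r_i' = m_i$ for every $i$ (each block-sum is itself a single digit matching that of $m$). This gives the ``if and only if''.

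For the final clause, suppose $f(m') = f(m)$; then $r_i' = m_i$ for all $i$. Each $r_i' = m_i \le p - 1$ is a sum of the digits $m_j'$ with $j \equiv i \pmod r$, and this sum of nonnegative integers bounded by $p-1$ has at least as many nonzero terms as... no: rather, a sum equal to $m_i$ of several nonnegative integers each $\le p-1$ has \emph{at least one} nonzero summand whenever $m_i \ne 0$, and in fact the number of nonzero summands is $\ge 1 = $ (number of nonzero digits of $m$ in position $i$); summing over $i$ with $m_i \ne 0$ gives $M_{m'} = \#\{j : m_j' \ne 0\} \ge \#\{i : m_i \ne 0\} = M_m$.

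The main obstacle I anticipate is purely bookkeeping: making the ``no carries'' equality condition precise, in particular handling the boundary cases $m = 0$ and $m = q - 1$ (where the representative in $[0,q-1]$ congruent to $m$ is not unique in the relevant sense, since $q - 1 \equiv 0$), and ensuring that the iterated block-summing genuinely terminates at $m$ rather than at $0$ or $q-1$. I expect $r > 1$ is used exactly here, to guarantee the blocks have length $\ge 2$ so that the relevant reductions behave as claimed and the digit-sum-versus-carry dictionary applies cleanly. Everything else is the elementary theory of base-$p$ digit sums.
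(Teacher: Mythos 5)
Your argument is correct in outline and, after a common starting point, takes a genuinely different route from the paper's. Both proofs begin by forming the intermediate integer $n_1 := \sum_{i=0}^{r-1} r_i' p^i$ (the paper arrives at the same quantity as $\sum_i R_i$, the sum of the base-$q$ digits of $m'$) and both use $f(m') = \sum_i r_i'$ together with $n_1 \equiv m \pmod{q-1}$. From there they diverge: the paper introduces the auxiliary function $g(n) = n_{r-1} + f(t)$, where $n = n_{r-1}p^{r-1}+t$ with $0 \le t < p^{r-1}$, and proves two clean claims, namely $\sum_i r_i' \ge g(n_1)$ (by comparing generalized coefficients $b_i$ to the genuine digits $a_i$) and $g(n_1) \ge f(m)$ (by induction along steps of $q-1$, using how $g$ changes under $n \mapsto n-1$ and $n \mapsto n+q$); the equality analysis then drops out of the equality conditions in those claims. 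You instead iterate the base-$p$ block-sum map $n \mapsto \sum_{i=0}^{r-1}\bigl(\sum_{j\equiv i\pmod r}c_j\bigr)p^i$ on the digits $c_j$ of $n$, observe it is non-increasing in value and in digit sum, preserves the class mod $q-1$, and stabilizes once the value drops below $q$. Your route is more directly visual (you watch the digit sum fall as you cast out $(q-1)$'s); the paper's pays the cost of defining $g$ but then encapsulates the bookkeeping in two reusable lemmas, which makes the equality case especially tidy. Two points to pin down if you flesh this out. First, in the equality case, ``no carries'' in forming $n_1$ from the $r_i'$ is exactly the condition $r_i'<p$ for every $i$; that forces $n_1 \le q-1$, so the iteration halts after a single step, and then $n_1 = m$ reads off $r_i' = m_i$. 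Second, your unease about the boundary is well-founded, but it is a flaw in the lemma itself rather than in your argument: $m = q-1$, $m' = 0$ satisfies the hypotheses yet $f(m') = 0 < r(p-1) = f(m)$. The paper's Claim~2 silently excludes this by writing $m' = m + k(q-1)$ with $k \in \mathbb{N}$, and in the only application $m$ arises from a character restricted to $\mathbb{F}_{p^{r!}}$, hence lies in $[0,q-2]$, so no harm is done; still, a corrected statement should either require $m < q-1$ or require $m' \ge m$.
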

\begin{proof}
For each $n\in\mathbb{N}$, write $n=n_{r-1}p^{r-1}+t$ with $0\le t<p^{r-1}$ and define $g(n)=n_{r-1}+f(t)$. We will prove through the following two claims.

\medskip
\noindent{\it Claim 1}: Let $b_i\in\mathbb{N}~(0\le i\le r-1)$ and $n=\sum_{i=0}^{r-1}b_ip^i$.
Then $\sum_{i=0}^{r-1}b_i\ge g(n)$ and the equality holds if and only if $0\le b_i<p$ $(0\le i\le r-2)$.

\smallskip
\noindent{\it Proof of Claim 1}: Write $n=\sum_{i=0}^{r-1}a_ip^i$ with $0\le a_i<p$ $(0\le i\le r-2)$. For each $0<j\le r-1$, let $r_j=\sum_{i=0}^{j-1}a_ip^i$, $r_j'=\sum_{i=0}^{j-1}b_ip^i$, and $q_j=\sum_{i=j}^{r-1}a_ip^{i-j}$, $q_j'=\sum_{i=j}^{r-1}b_ip^{i-j}$. Then $r_j+q_jp^j=r_j'+q_j'p^j$ $(0<j\le r-1)$. Since $0\le a_i<p$ $(0\le i\le j-1)$, we have $r_j\le r_j'$ $(0<j\le r-1)$, and hence $q_j\ge q_j'$ $(0<j\le r-1)$. A simple calculation shows that $b_0-a_0=(q_1-q_1')p$, $b_i-a_i=(q_{i+1}-q_{i+1}')p-(q_i-q_i')$ $(1\le i\le r-2)$, $b_{r-1}-a_{r-1}=-(q_{r-1}-q_{r-1}')$. Taking the sum of the above equations yields $$\sum_{i=0}^{r-1}b_i-g(n)=\sum_{i=0}^{r-1}(b_i-a_i)=(p-1)\sum_{i=1}^{r-1}(q_i-q_i')\ge0,$$
and the equality holds if and only if $q_i=q_i'$ $(1\le i\le r-1)$ which is equivalent to $a_i=b_i$ $(0\le i\le r-1)$, and hence $0\le b_i<p$ $(0\le i\le r-2)$.

\medskip
\noindent{\it Claim 2}: Assume that $0\le m\le q-1$ and $m'\equiv m$ $(\operatorname{mod}q-1)$. Then $g(m')\ge f(m)$ and the equality holds if and only if $m'=m$.

\noindent{\it Proof of Claim 2}: It is clear that $g(n+q)=g(n)+p$ for any $n$ by definition. For each $n\in\mathbb{N}^*$, let $i_n\in\mathbb{N}$ determined by $p^{i_n}|n$ and $p^{i_n+1}\nmid n$. By definition, we have $$g(n-1)=g(n)+(p-1)i_n-1\ge g(n)-1.$$ Thus, we have $$g(n+q-1)\ge g(n+q)-1=g(n)+p-1>g(n).$$
Let $m'=m+k(q-1)$, $k\in\mathbb{N}$. By the above inequality, it follows immediately that $$g(m')=g(m+k(q-1))\ge g(m)=f(m),$$
and the equality holds if and only if $k=0$, and hence $m'=m$.

\medskip
\noindent{\it Proof of lemma}: Let $m'=\sum_iR_iq^i$ be the $q$-adic expansion of $m'$. Then $\sum_iR_i\equiv m$ $(\operatorname{mod}q-1)$. On the other hand, $\sum_iR_i=\sum_{i=0}^{r-1}r_i'p^i$. It follows above two claims that $$f(m')=\sum_{i=0}^{r-1}r_i'\ge g\left(\sum_iR_i\right)\ge f(m),$$
where the first inequality follows from Claim 1 and the second one follows from Claim 2.
Thus, the equality holds if and only if $\sum_{i=0}^{r-1}r_i'p^i=g\left(\sum_iR_i\right)=f(m)$.
The former holds if and only if $0\le r_i'<p$ $(0\le i\le r-2)$ and the latter holds if and only if $\sum_iR_i=m$, i.e. $\sum_{i=0}^{r-1}r_i'p^i=\sum_{i=0}^{r-1}m_i'p^i$. Since $0\le r_i'<p$ $(0\le i\le r-2)$ and $0\le m_i<p$ $(0\le i\le r-1)$, we have $r_i'=m_i$ $(0\le i\le r-1)$ by comparing the coefficients, and hence $M_{m'}\ge M_m$.
\end{proof}

 For each $\sigma\in\widehat{\Bbbk^*}$, by considering its restriction to all $\mathbb{F}_{p^{n!}}$ with $n\in\mathbb{N}^*$, one identifies $\sigma$ with an array $(m_n)_{n\in\mathbb{N}^*}$ such that $0\leq m_n<p^{n!}-1$, and
\begin{equation}\label{mod}
m_i\equiv m_j~(\operatorname{mod}p^{i!}-1)~\mbox{if}~i<j.
\end{equation}
All $m_n$ are characterized by $\sigma(t)=t^{m_n}$ for any $t\in\mathbb{F}_{p^{n!}}$. In other words, we have $$\widehat{\Bbbk^*}\simeq\underleftarrow{\lim}_n~\mathbb{Z}/(p^{n!}-1)\mathbb{Z}$$
as abelian groups.

The following elementary lemma is well known and will be used later.

\begin{Lem}\label{Binomial}
$(\op{i})$ $(${\cite[5.1]{Haboush}}$)$ Assume that $p$ is a prime. Let $m, n$ be two positive integers with $p$-adic expansion
$$m=\sum_{i}a_i p^i~~~~~ \text{and} ~~~~~n=\sum_{i}b_i p^i.$$
Then we have
$$\left(m\atop n\right)\equiv \prod_{i}\left(a_i\atop b_i\right) \quad(\op{mod}p) .$$
In particular, $p|\left(m\atop n\right)$ if and only if there exists $i$ such that
$a_i <b_i$.

\noindent $(\op{ii})$ $(${\cite[Lemma 2.1]{Se2}}$)$ Let $q$ be a power of $p$. Then
$$\sum\limits_{t\in\mathbb{F}_{q}}t^k=\left\{
\begin{array}{ll}
-1 &\ \mbox{if}~(q^a-1)|k~\mbox{and}~k\neq0\\
0 &\ \mbox{otherwise}
\end{array}
\right.,~~~\sum\limits_{t\in\mathbb{F}_{q^a}^*}t^k=\left\{
\begin{array}{ll}
-1 &\ \mbox{if}~(q^a-1)|k\\
0 &\ \mbox{otherwise}
\end{array}
\right..$$
\end{Lem}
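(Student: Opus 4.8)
The plan is to prove the two parts by the standard elementary arguments, treating them separately. For part (i) I would pass to the polynomial ring $\mathbb{F}_p[X]$ and use the Frobenius identity $(1+X)^p=1+X^p$. Writing $m=\sum_i a_ip^i$ with $0\le a_i<p$ one obtains, in $\mathbb{F}_p[X]$,
$$(1+X)^m=\prod_i\bigl((1+X)^{p^i}\bigr)^{a_i}=\prod_i(1+X^{p^i})^{a_i}=\prod_i\Bigl(\sum_{b_i=0}^{a_i}\binom{a_i}{b_i}X^{b_ip^i}\Bigr).$$
Multiplying the right-hand side out, a monomial $X^n$ arises exactly once for each choice of digits $b_i$ with $0\le b_i\le a_i$, and since $b_i\le a_i<p$ such a choice is forced to be the unique $p$-adic expansion $n=\sum_i b_ip^i$; hence the coefficient of $X^n$ is the single product $\prod_i\binom{a_i}{b_i}$. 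Comparing with the coefficient of $X^n$ in $(1+X)^m$, which is $\binom{m}{n}$ reduced modulo $p$, gives the congruence. The ``in particular'' clause is then immediate: if $b_i>a_i$ for some $i$ then $\binom{a_i}{b_i}=0$ and the product vanishes; conversely if $b_i\le a_i<p$ for all $i$ then every factor $\binom{a_i}{b_i}$ is a unit of $\mathbb{F}_p$, so the product is nonzero.

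For part (ii) I would use that the multiplicative group of a finite field of order $q^a$ is cyclic of order $q^a-1$. Fixing a generator $g$, one has
$$\sum_{t\in\mathbb{F}_{q^a}^*}t^k=\sum_{j=0}^{q^a-2}(g^k)^j.$$
If $(q^a-1)\mid k$ then $g^k=1$, so the sum equals $q^a-1$, which is $-1$ in $\mathbb{F}_p$; otherwise $g^k\ne1$ and summing the geometric series gives $\bigl((g^k)^{q^a-1}-1\bigr)/(g^k-1)=0$, since $(g^k)^{q^a-1}=1$. This proves the second formula. The first then follows by adjoining the term $t=0$: when $k\ne0$ we have $0^k=0$, so the sum over the full field agrees with the sum over $\mathbb{F}_{q^a}^*$; the hypothesis $k\ne0$ is exactly what rules out the degenerate term $0^0=1$ (for $k=0$ one instead gets $\sum_{t\in\mathbb{F}_{q^a}}1=q^a=0$ in $\mathbb{F}_p$).

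Neither part presents a genuine obstacle; the content is purely bookkeeping. The two points that deserve a moment's attention are: in (i), the uniqueness of the $p$-adic digits when the product is expanded --- this is precisely what makes the coefficient of $X^n$ a single binomial product rather than a sum of several --- and, in (ii), the correct handling of the two boundary cases, the zero element of the field and the exponent $k=0$, which is the reason the stated formula over the whole field carries the proviso $k\ne0$. Since the lemma enters the paper only as a computational tool, this level of detail is sufficient.
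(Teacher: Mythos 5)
Your proof is correct in both parts, but note that the paper does not actually prove this lemma: it simply cites Haboush \cite[5.1]{Haboush} for (i) and Serre \cite[Lemma 2.1]{Se2} for (ii), using them as black boxes. Your argument for (i) is the standard generating-function proof of Lucas' theorem via the Frobenius identity $(1+X)^p=1+X^p$ in $\mathbb{F}_p[X]$, and the key observation that $0\le b_i\le a_i<p$ forces the digit choice to be the $p$-adic expansion of $n$ is exactly right; your argument for (ii) via cyclicity of $\mathbb{F}_{q^a}^*$ and a geometric series, with the boundary cases $t=0$ and $k=0$ handled separately, is likewise the textbook proof and matches the cited sources. One small point: in the statement the first sum is written over $\mathbb{F}_q$ while the divisibility condition involves $q^a-1$; this is a typo in the paper (it should read $\mathbb{F}_{q^a}$, consistent with the second formula), and you implicitly corrected it.
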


Lemma \ref{padic} and \ref{red} below is the key to prove results in the next section.
\begin{Lem}\label{padic}
The following conditions on $\theta=(m_n)_{n\in\mathbb{N}^*}\in\widehat{\Bbbk^*}$ are equivalent:

\noindent$(\operatorname{i})$ For any $r\in\mathbb{N}$, we have $\displaystyle\left({m_s}\atop k(p^{r!}-1)\right)\neq0$ $(\operatorname{mod}p)$ for some $s>r$ and $k\in\mathbb{N}^*$;

\noindent$(\operatorname{ii})$ $f(m_n)$ $(n\in\mathbb{N})$ are unbounded.
\end{Lem}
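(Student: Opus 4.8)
The plan is to translate both conditions into statements about base-$p$ digits by means of Lucas' theorem, Lemma~\ref{Binomial}(i): for positive integers $a,b$ one has $\binom{b}{a}\not\equiv 0\pmod{p}$ if and only if every base-$p$ digit of $a$ is at most the corresponding digit of $b$, a relation I abbreviate $a\preceq b$. Thus condition (i) reads: for every $r\in\mathbb{N}$ there are $s>r$ and $k\in\mathbb{N}^*$ with $k(p^{r!}-1)\preceq m_s$. The first step is the observation that $\bigl(f(m_n)\bigr)_{n\ge 2}$ is non-decreasing: for $2\le r<s$ we have $0\le m_r\le p^{r!}-1$ and $m_s\equiv m_r\pmod{p^{r!}-1}$, so Lemma~\ref{cor33} (applied with its ``$r$'' taken to be $r!\ge 2$) gives $f(m_s)\ge f(m_r)$, together with a precise criterion for equality. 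Consequently (ii) is equivalent to $f(m_n)\to\infty$, and ``$f(m_n)$ bounded'' is equivalent to ``$f(m_n)$ is eventually constant''.

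For (ii)$\Rightarrow$(i), fix $r$ and put $q=p^{r!}$. For large $s$, expand $m_s=\sum_j R_jq^j$ in base $q$, so $0\le R_j<q$, the base-$p$ supports of the blocks $R_jq^j$ are pairwise disjoint, and $f(m_s)=\sum_j f(R_j)$. Each nonzero block contributes between $1$ and $r!(p-1)$ to this sum, so $m_s$ has at least $f(m_s)/\bigl(r!(p-1)\bigr)$ nonzero blocks; since $f(m_s)\to\infty$, we may choose $s>r$ having at least $q-1$ nonzero blocks. Examining the partial sums of these blocks modulo $q-1$, a pigeonhole argument produces a subset $S$ of blocks with $\sum_{j\in S}R_j$ a positive multiple of $q-1$. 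Then $n:=\sum_{j\in S}R_jq^j$ is obtained from $m_s$ by deleting some blocks, hence $n\preceq m_s$; moreover $n>0$, and $n\equiv\sum_{j\in S}R_j\equiv 0\pmod{q-1}$ because $q^j\equiv 1\pmod{q-1}$, so $n=k(p^{r!}-1)$ for some $k\ge 1$. By Lemma~\ref{Binomial}(i), $\binom{m_s}{k(p^{r!}-1)}\not\equiv 0\pmod{p}$, which is exactly (i).

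For (i)$\Rightarrow$(ii) I argue by contraposition. Suppose $f(m_n)$ is bounded; by the first step it equals a constant $C$ for all $n\ge N$, where we may take $N\ge 2$. Fix $r=N$ and let $s>N$ be arbitrary. Since $f(m_s)=f(m_N)$, the equality criterion in Lemma~\ref{cor33} applies to the pair $(m_N,m_s)$ and shows that, writing $m_s=\sum_j R_jq^j$ in base $q=p^{N!}$, adding the base-$p$ digits of the $R_j$ position by position recovers the base-$p$ digits of $m_N$ with no carrying; hence $\sum_j R_j=m_N$ as integers, and $\sum_j R_j<p^{N!}-1=q-1$. Now suppose $n=k(q-1)$ with $k\ge 1$ satisfied $n\preceq m_s$; expanding $n=\sum_j c_jq^j$ in base $q$, blockwise domination gives $c_j\le R_j$ for all $j$, so $\sum_j c_j\le\sum_j R_j<q-1$, whereas $\sum_j c_j\equiv n\equiv 0\pmod{q-1}$; therefore $\sum_j c_j=0$, so $n=0$, contradicting $n\ge q-1>0$. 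Hence no $s>N$ and $k\in\mathbb{N}^*$ work for $r=N$, i.e.\ (i) fails.

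The step I expect to be the main obstacle is (i)$\Rightarrow$(ii). The crude inequality $f(m_s)\ge f\bigl(k(p^{r!}-1)\bigr)$ is useless, since the base-$p$ digit sum of a multiple of $p^{r!}-1$ can stay small for arbitrarily large multiples. What makes the argument work is the two-fold use of Lemma~\ref{cor33}: first its inequality, to promote ``$f(m_n)$ bounded'' to ``$f(m_n)$ eventually constant''; then its equality case, to extract the sharp identity $\sum_j R_j=m_N$ (with no carrying) for the base-$q$ blocks of $m_s$, from which $\sum_j R_j<q-1$ instantly excludes any positive multiple of $q-1$ dominated by $m_s$. By contrast (ii)$\Rightarrow$(i) is comparatively soft: the one real point is to recognise that ``$m_s$ dominates a positive multiple of $p^{r!}-1$'' is a subset-sum question about the base-$p^{r!}$ digit blocks of $m_s$, settled by pigeonhole once $f(m_s)$ is large enough; the rest is routine bookkeeping with $p$-adic expansions.
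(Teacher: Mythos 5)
Your proof is correct, and it uses the same basic toolkit as the paper (Lucas via Lemma~\ref{Binomial}(i), base-$p^{r!}$ blocks, pigeonhole, and Lemma~\ref{cor33}), but deploys those tools differently in both directions, so a comparison is worth recording. For (ii)$\Rightarrow$(i), the paper first pigeonholes the total digit sum over the residue classes $i\pmod{r!}$ to find a ``fat'' class with $f_{i_0}\ge p^{r!}-1$, and then corrects the block-sum modulo $q-1$ by \emph{decrementing} selected digits in that class until the total drops by $p^{i_0}N$; you instead pigeonhole the \emph{partial sums of the base-$q$ blocks} modulo $q-1$ to find a sub-collection of blocks summing to a positive multiple of $q-1$, and then simply keep those blocks and zero out the rest. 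Both produce a positive multiple of $q-1$ digitwise dominated by $m_s$; yours is the classical subset-sum pigeonhole and is arguably a bit cleaner, while the paper's digit-decrement argument needs the extra observation that multiplying $N$ by $p^{i_0}$ shifts it into the fat class. For (i)$\Rightarrow$(ii), both go by contraposition, but the paper uses only the \emph{inequality} half of Lemma~\ref{cor33} to get $f\bigl(k(p^{r!}-1)\bigr)\ge f(p^{r!}-1)=r!(p-1)$ and then simply chooses $r$ so large that this exceeds the bound on $f(m_n)$; you instead invoke the \emph{equality criterion} of Lemma~\ref{cor33} to upgrade ``bounded'' to ``eventually constant'', fix $r=N$ at the stabilisation point, and extract the sharp identity $\sum_j R_j=m_N<q-1$ which instantly precludes any positive multiple of $q-1$ dominated by $m_s$. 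The paper's route is shorter; yours is sharper, in that it locates a single explicit $r$ (namely $N$) at which condition (i) already fails, rather than an unspecified large one, and it makes visible the rigid block structure that Lemma~\ref{red} will exploit next. One small point worth making explicit in your write-up: the monotonicity of $(f(m_n))_{n\ge 2}$ (and hence ``bounded $\Leftrightarrow$ eventually constant'') needs $n\ge 2$ because Lemma~\ref{cor33} requires its parameter $r>1$, and you correctly guard against this by taking $N\ge 2$; this side condition never arises in the paper's version of (i)$\Rightarrow$(ii), which is one reason that version is slightly more economical.
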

\begin{proof}
(i) $\Rightarrow$ (ii): Suppose $f(m_n)$ $(n\in\mathbb{N}^*)$ are bounded. Then we can find $r\in\mathbb{N}^*$ such that $f(p^{r!}-1)>f(m_n)$ for any $n\in\mathbb{N}^*$. It follows that $f(m_n)<f(k(p^{r!}-1))$ for any $n,k\in\mathbb{N}^*$ by Lemma \ref{cor33}. This implies that some digit in $p$-adic expansion of $m_n$ is less than the corresponding digit in that of $k(p^{r!}-1)$ and hence $\displaystyle\left({m_n}\atop k(p^{r!}-1)\right)=0$ $(\operatorname{mod}p)$ for any $n,k\in\mathbb{N}^*$ by Lemma \ref{Binomial} (i). This contradicts (i).

\medskip
\noindent(ii) $\Rightarrow$ (i): For any $r\in\mathbb{N}$, (ii) implies that $f(m_s)\ge r!(p^{r!}-1)$ for some $s>r$. Let $m_s=\sum_jm_{sj}p^j$ be its $p$-adic expansion. For any $0\le i<r!$, let $f_i=\sum_{j\equiv i (\operatorname{mod}r!)}m_{sj}$. It is clear that $f(m_s)=\sum_{0\le i<r!}f_i$. It follows that
\begin{equation}\label{31}
 f_{i_0}\ge p^{r!}-1~\mbox{for~some}~0\le i_0<r!.
 \end{equation}
Let $m_s=\sum_tR_tq^t$ be the $q=p^{r!}$-adic expansion of $m_s$. Then
 $$R_t=\sum_{tr!\le j\le(t+1)r!-1}m_{sj}p^{j-tr!}.$$
Since $\operatorname{gcd}(p^{i_0},p^{r!}-1)=1$, there exists an $0\le N<p^{r!}-1$ such that
\begin{equation}\label{32}
\sum_iR_i\equiv p^{i_0}N~(\operatorname{mod}p^{r!}-1).
\end{equation}
To prove (i), it remains to find an $m_s'\in\mathbb{N}^*$ such that $(p^{r!}-1)|m_s'$. Thanks to (\ref{31}), we can find $0\le n_j\le m_{sj}$ for each $j\equiv i_0$ $(\operatorname{mod}r!)$, such that $\sum_{j\equiv i_0 ({\rm mod}r!)}n_j=N$. We denote
$$
m_{sj}'=\left\{
\begin{array}{ll}
m_{sj}-n_j & j\equiv i_0 (\operatorname{mod}r!)\\
m_{sj} & \mbox{otherwise}\\
\end{array}
\right..
$$
Let $m_s'=\sum_jm_{sj}'p^j$ and $m_s'=\sum_tR_t'q^t$ be its $q$-adic expansion. Since
 $$\sum_{j\equiv i_0 ({\rm mod}r!)}m_{sj}\ge p^{r!}-1>N=\sum_{j\equiv i_0 ({\rm mod}r!)}n_j,$$
 it follows that $m_{sj}'=m_{sj}-n_j>0$ for some $j\equiv i_0~({\rm mod}r!)$, which implies that $m_s'\ne0$.
 It is clear that $$\sum_iR_i'=\sum_iR_i-p^{i_0}N\equiv 0~(\operatorname{mod}p^{r!}-1)$$
by (\ref{32}), which implies that $(p^{r!}-1)|m_s'$. Moreover, since $m_{sj}'\le m_{sj}$ by definition, we have $\displaystyle\left({m_s}\atop m_s'\right)\neq0$ $(\operatorname{mod}p)$ by Lemma \ref{Binomial} (i). Thus, (i) is proved.
\end{proof}

\begin{Lem}\label{red}
The following conditions on $\theta=(m_n)_{n\in\mathbb{N}^*}\in\widehat{\Bbbk^*}$ are equivalent:

\noindent$(\operatorname{i})$ $f(m_n)$ $(n\in\mathbb{N}^*)$ are bounded;

\noindent$(\operatorname{ii})$ $\theta$ is trivial, or there exist positive integers $N$, $0<\theta_1,\cdots,\theta_l<p$ and $g_{i,n}\in\mathbb{N}$ $(1\le i\le l, n\ge N)$ satisfying $0\le g_{i,n}<n!$, $g_{i,n+1}\equiv g_{i,n}$ $(\operatorname{mod}n!)$ for all $1\le i\le l$ and $g_{i,n}\ne g_{j,n}$ if $i\ne j$, such that
$m_n=\sum_{1\le i\le l}\theta_ip^{g_{i,n}}$ for all $n\ge N$.
\end{Lem}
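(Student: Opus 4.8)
The plan is to prove the two implications separately, with the bulk of the work going into (i) $\Rightarrow$ (ii). For the easy direction (ii) $\Rightarrow$ (i), observe that if $\theta$ is trivial there is nothing to prove, and otherwise the hypothesis gives $m_n=\sum_{1\le i\le l}\theta_i p^{g_{i,n}}$ with all exponents $g_{i,n}$ distinct (for fixed $n$) and each digit $\theta_i$ strictly less than $p$; hence this is literally the $p$-adic expansion of $m_n$, so $f(m_n)=\sum_{i=1}^l\theta_i\le l(p-1)$ is bounded independently of $n$. The one point to be careful about is that the expansion be genuine, i.e.\ the digits $\theta_i$ are $<p$ and sit in distinct positions — both are built into the statement — so no carrying occurs.

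For (i) $\Rightarrow$ (ii), suppose $f(m_n)$ is bounded, say by some constant, and that $\theta$ is nontrivial. The key structural input is Lemma \ref{cor33}: it tells us that along the tower, $f$ is (weakly) monotone in a precise sense, and that once $f(m_n)$ stabilizes to its eventual minimum value, say for all $n\ge N$, the equality case of Lemma \ref{cor33} forces the $p$-adic digits of $m_{n+1}$ to be obtained from those of $m_n$ by ``spreading out'' the digit blocks of length $n!$ without merging — precisely, writing $m_n=\sum_j m_{n,j}p^j$ and using $r_i'=\sum_{j\equiv i (\mathrm{mod}\,n!)}m_{n+1,j}$, the equality $f(m_{n+1})=f(m_n)$ holds iff $r_i'=m_{n,i}$ for each $i$ (here $m_n<p^{n!}-1$ so its $p$-adic digits are indexed by $0\le i<n!$, well, $i\le n!-1$; and note $m_n\le p^{n!}-1$ with equality excluded gives at most $n!$ digits, one of which we may need to treat separately but since $f$ is bounded and $m_n\not\equiv 0$ we can arrange $m_n<p^{n!}-1$ strictly for large $n$). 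I would then argue that, because no merging occurs and the total digit sum is a fixed number $f(m_N)$, each nonzero digit of $m_N$ — there are $M_{m_N}$ of them, with values summing to $f(m_N)$ — gets carried along to a well-defined position in every $m_n$ for $n\ge N$. Concretely, define $\theta_1,\dots,\theta_l$ to be the nonzero digits of $m_N$ (so $l=M_{m_N}$ and $0<\theta_i<p$), and $g_{i,n}$ to be the position of the $i$-th such digit in the $p$-adic expansion of $m_n$; the compatibility condition $g_{i,n+1}\equiv g_{i,n}$ $(\mathrm{mod}\,n!)$ is exactly what the equality case of Lemma \ref{cor33} provides, and distinctness $g_{i,n}\ne g_{j,n}$ for $i\ne j$ holds because these are distinct digit positions. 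This yields $m_n=\sum_{1\le i\le l}\theta_i p^{g_{i,n}}$ for all $n\ge N$, as required.

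The main obstacle I anticipate is making rigorous the claim that ``the digits propagate coherently'' — i.e.\ bootstrapping from the $n\to n+1$ equality case of Lemma \ref{cor33} to a uniform labelling $i\mapsto g_{i,n}$ valid for all $n\ge N$ simultaneously. The subtlety is that Lemma \ref{cor33} as stated compares $m$ with $m'$ where $m<q-1=p^r-1$ and $m'\equiv m$; to apply it to consecutive levels one takes $r=n!$, $q=p^{n!}$, $m=m_n$, $m'=m_{n+1}$ (using $m_{n+1}\equiv m_n\pmod{p^{n!}-1}$ from \eqref{mod}), and the conclusion $r_i'=m_{n,i}$ tells us each length-$n!$ block of digits of $m_{n+1}$ sums to the corresponding digit of $m_n$; combined with $f$ being constant this must mean each such block contains exactly one nonzero digit, equal to that digit of $m_n$ — but proving ``exactly one'' requires knowing the digits of $m_{n+1}$ are themselves $<p$ (automatic, they're $p$-adic digits) and that there's no further collapsing, which is where one uses that $f(m_{n+1})=f(m_n)$ is the sum over all blocks and each block sum $\le$ its contribution. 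I would also need to handle the boundary bookkeeping around the top digit position $n!-1$ versus $n!$ carefully (the function $g$ in the proof of Lemma \ref{cor33} singles out the top coefficient), and verify the trivial case $m_n\equiv 0$ is correctly excluded. Once the propagation is set up, reading off $N$, the $\theta_i$, and the $g_{i,n}$ is routine.
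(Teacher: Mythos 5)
Your proof of the easy direction (ii) $\Rightarrow$ (i) is essentially the paper's, and is fine.

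For (i) $\Rightarrow$ (ii), however, there is a genuine gap precisely at the point you flagged as the ``main obstacle.'' From $f(m_{n+1})=f(m_n)$ and the equality case of Lemma~\ref{cor33}, you correctly extract that each length-$n!$ block of digits of $m_{n+1}$ sums to the corresponding digit $m_{n,i}$ of $m_n$. But this does \emph{not} force each such block to contain exactly one nonzero digit. A digit $\theta_i$ of $m_n$ could split within its block as, say, $a+b$ with $a,b>0$ and $a+b=\theta_i<p$, occupying two distinct positions: no carrying is involved, the block sum is still $\theta_i$, and $f(m_{n+1})=f(m_n)$ still holds. Your attempted justification (``$f(m_{n+1})$ is the sum over all blocks and each block sum $\le$ its contribution'') only reproves $\sum_i r_i'=\sum_i m_{n,i}$, which cannot rule out such a split. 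So the recipe ``take $\theta_1,\dots,\theta_l$ to be the nonzero digits of $m_N$ and track their positions'' is not yet well-posed: without more input, $m_{N+1}$ might have strictly more than $l$ nonzero digits.

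The missing ingredient is the quantity $M_n$ (number of nonzero $p$-adic digits), which Lemma~\ref{cor33} controls in its final clause: if $f(m')=f(m)$ then $M_{m'}\ge M_m$. Thus once $f(m_n)$ has stabilized, $M_{m_n}$ is non-decreasing; and since $M_{m_n}\le f(m_n)$ is bounded, $M_{m_n}$ also stabilizes. One must therefore pick $N$ large enough that \emph{both} $f(m_n)$ and $M_{m_n}$ are constant for $n\ge N$. With $M_{m_{k+1}}=M_{m_k}$ in hand, the $M_{m_k}$ blocks of $m_{k+1}$ with nonzero block sum each contribute at least one nonzero digit, so by counting each contributes exactly one, and a split as above is impossible. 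That is exactly how the paper's proof proceeds, and is what makes the inductive propagation of $g_{i,n}$ rigorous. Your outline recognizes where the difficulty lies but does not supply this second stabilization, so as written the argument does not close.
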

\begin{proof}
(ii) $\Rightarrow$ (i): If $\theta$ is trivial, then $f(m_n)=0$ for all $n$. Otherwise, (ii) implies that $f(m_n)\le\max\{f(m_1),\cdots,f(m_N),\sum_{1\le i\le l}\theta_i\}$, and hence (i) holds.

\smallskip
\noindent (i) $\Rightarrow$ (ii): Assume that $\theta$ is nontrivial. Lemma \ref{cor33} implies that $\{f(m_n)\}_{n\in\mathbb{N}}$ is non-decreasing. Due to (i), there exists $N_1\in\mathbb{N}$ such that
\begin{equation}\label{34}
f(m_{N_1})=f(m_{N_1+1})=f(m_{N_1+2})=\cdots.
\end{equation}
On the other hand, (i) implies that $\{M_{m_n}\}_{n\in\mathbb{N}}$ is bounded. Combining this, (\ref{34}) and Lemma \ref{cor33}, we see that $0\ne M_{m_N}=M_{m_{N+1}}=M_{m_{N+2}}=\cdots$
for some $N\ge N_1$.

 Denote $m_N=\sum_{i=1}^l\theta_ip^{g_{i,N}}$ (with all $\theta_i\ne0$) the $p$-adic expansion of $m_N$. In particular, $g_{i,N}$ $(1\le i\le l)$ are distinct. Suppose that $g_{i,n}$ $(1\le i\le l)$ satisfying requirements in (ii) are determined for $N\le n\le k$, we will determine $g_{i,k+1}$ $(1\le i\le l)$ satisfying requirements in (ii). Let $m_{k+1}=\sum_jm_{k+1,j}p^j$ be the $p$-adic expansion of $m_{k+1}$ and $r_i'=\sum_{j\equiv i(\operatorname{mod}k!)}m_{k+1,j}$ ($1\le i< k!$). Since $f(m_{k+1})=f(m_k)$, we have
 \begin{equation}\label{pikachu}
 r_{g_{i,k}}'=\theta_i~(1\le i\le l)~{\rm and} ~r_t'=0~{\rm if}~t\ne g_{i,k}~(1\le i\le l)
 \end{equation}
 by Lemma \ref{cor33}. Combining $M_{m_{k+1}}=M_{m_k}$ and (\ref{pikachu}), we see that $m_{k+1,j}=0$ if $j\not\equiv g_{i,k}$ $(\op{mod}k!)$ $(1\le i\le l)$, and for each $1\le i\le l$, there is a unique $g_{i,k+1}\in\mathbb{N}$ satisfying $g_{i,k+1}\equiv g_{i,k}$ $(\op{mod}k!)$, such that $m_{k+1,g_{i,k+1}}=\theta_i$ and $m_{k+1,j}=0$ if $j\equiv g_{i,k}$ $(\op{mod}k!)$ and $j\ne g_{i,k+1}$. Since $g_{i,k}~(1\le i\le l)$ are distinct by assumption, it follows that $g_{i,k+1}~(1\le i\le l)$ are distinct. To summarize, we have  $m_{k+1}=\sum_{1\le i\le l}\theta_ip^{g_{i,k+1}}$. This completes the proof.
\end{proof}

\begin{Rem}\label{Galois}
\normalfont
Let $\mathbb{G}=\operatorname{Gal}(\bar{\mathbb{F}}_p/\mathbb{F}_p)$, the absolute Galois group of $\mathbb{F}_p$. It is known that $\mathbb{G}$ is isomorphic to the inverse limit of the system $\cdots\rightarrow\mathbb{Z}/(n+1)!\mathbb{Z}\rightarrow\mathbb{Z}/n!\mathbb{Z}\rightarrow\cdots$.
Therefore, the congruence relation in Lemma \ref{red} (ii) is equivalent to say that the sequence $(g_{i,n})_{n\ge N}$ corresponds to an automorphism $\omega_i\in\mathbb{G}$.
\end{Rem}

Let $\mathcal{X}_0$ (resp. $\mathcal{X}_1$) be the subset of $\widehat{\Bbbk^*}$ satisfying the equivalent conditions in Lemma \ref{red} (resp. Lemma \ref{padic}). Then $\widehat{\Bbbk^*}=\mathcal{X}_0\cup\mathcal{X}_1$.
\begin{Ex}
\normalfont
In the case $G=SL_2(\Bbbk)$, it is clear that the dominant weights are in $\mathcal{X}_0$. The anti-dominant weights (i.e. the rational characters corresponding to $\mathbb{Z}_{<0}$) are in $\mathcal{X}_1$. Indeed, if $\theta=(m_n)_{n\in\mathbb{N}^*}$ is an anti-dominant weight, then there is a $\lambda\in\mathbb{N}^*$ such that $m_n=p^{n!}-1-\lambda$ if $n$ is large enough, and hence $\{f(m_n)\}_{n\in\mathbb{N}^*}$ is unbounded.
\end{Ex}

It is known that ${\bf T}_i$ (defined below Assumption 1.1) is isomorphic to the standard torus of $SL_2(\Bbbk)$ (which is isomorphic to $\Bbbk^*$) or $PGL_2(\Bbbk)$. If $\mathcal{D}{\bf G}$, the derived group of ${\bf G}$, is simply connected and semisimple, then ${\bf T}_i\simeq\Bbbk^*$ $(1\le i\le |I|)$. Let $\mathbb{X}=\{\theta\in\widehat{\bf T}\mid \theta|_{{\bf T}'}\in X({\bf T}')\}$ and $\mathbb{X}_1=\{\theta\in \mathbb{X}\mid0\le\langle\theta|_{{\bf T}'},\alpha_i^\vee\rangle<p~(i\in I)\}$.
\begin{Lem}\label{kkk}
Assume $\mathcal{D}{\bf G}$ is simply connected and semisimple and let $\theta\in\widehat{\bf T}$. If $\theta|_{{\bf T}_i}\in\mathcal{X}_0$ for any $i\in I$, then there exist $\theta_1,\cdots,\theta_l\in \mathbb{X}_1$ and distinct automorphisms $\omega_1,\cdots,\omega_l\in\mathbb{G}$ such that $\theta=\sum_{i=1}^l\theta_i^{\omega_i}$.
\end{Lem}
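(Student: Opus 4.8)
The plan is to push everything down to the subtorus ${\bf T}'$, decompose there one coroot at a time via Lemma~\ref{red}, and lift back up to ${\bf T}$ using Lemma~\ref{surj}. Since $\mathcal{D}{\bf G}$ is simply connected and semisimple, the coroots $\alpha_i^\vee$ $(i\in I)$ form a $\mathbb{Z}$-basis of $Y({\bf T}')$, so the multiplication map $\prod_{i\in I}{\bf T}_i\to{\bf T}'$ is an isomorphism of algebraic groups (each ${\bf T}_i$ being the image $\alpha_i^\vee(\Bbbk^*)\simeq\Bbbk^*$), $X({\bf T}')$ is the lattice dual to $\{\alpha_i^\vee\}$, and restriction gives an isomorphism $\widehat{{\bf T}'}\simeq\prod_{i\in I}\widehat{{\bf T}_i}$ under which $\mu\in X({\bf T}')$ corresponds to the tuple whose $i$-th entry is the rational character $t\mapsto t^{\langle\mu,\alpha_i^\vee\rangle}$ of ${\bf T}_i\simeq\Bbbk^*$. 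Thus $\mu\in X({\bf T}')$ has $0\le\langle\mu,\alpha_i^\vee\rangle<p$ for all $i$ exactly when each $\mu|_{{\bf T}_i}$ is the array attached to an integer in $[0,p)$.

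First I would apply Lemma~\ref{red} to each $\theta|_{{\bf T}_i}=(m_{i,n})_n\in\mathcal{X}_0$, discarding those $i$ with $\theta|_{{\bf T}_i}$ trivial; if all are trivial then $\theta\in\mathbb{X}_1$ and $\theta=\theta^{\mathrm{id}}$ already works. Otherwise this produces $N_i$, integers $0<\theta_{i,1},\dots,\theta_{i,l_i}<p$, and sequences $(g_{i,j,n})_{n\ge N_i}$ that are pairwise distinct in $j$ and each (by Remark~\ref{Galois}) represent an automorphism $\omega_{i,j}\in\mathbb{G}$, with $m_{i,n}=\sum_j\theta_{i,j}p^{g_{i,j,n}}$ for $n\ge N_i$; note that for fixed $i$ the $\omega_{i,j}$ are pairwise distinct because the sequences $(g_{i,j,n})_n$ are. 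Let $\omega_1,\dots,\omega_l$ enumerate $\bigcup_i\{\omega_{i,j}\}$, and define $\mu_k\in X({\bf T}')$ by $\langle\mu_k,\alpha_i^\vee\rangle:=\theta_{i,j}$ if $\omega_k=\omega_{i,j}$ for the (unique) such $j$, and $:=0$ otherwise; then $0\le\langle\mu_k,\alpha_i^\vee\rangle<p$, so by Lemma~\ref{surj} we may choose $\theta_k\in\widehat{{\bf T}}$ with $\theta_k|_{{\bf T}'}=\mu_k$, and such $\theta_k$ lies in $\mathbb{X}_1$. The main computation is $\theta|_{{\bf T}'}=\sum_k\mu_k^{\omega_k}$; by the first paragraph it is enough to verify this on each ${\bf T}_i\simeq\Bbbk^*$. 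Writing $\omega_k$ as a compatible sequence $(\gamma_{k,n})_n$ with $0\le\gamma_{k,n}<n!$, so that $\omega_k$ acts on $\mathbb{F}_{p^{n!}}$ by $t\mapsto t^{p^{\gamma_{k,n}}}$, and using $\gamma_{k,n}=g_{i,j,n}$ whenever $\omega_k=\omega_{i,j}$, the restriction of $\sum_k\mu_k^{\omega_k}$ to ${\bf T}_i$ sends $t\in\mathbb{F}_{p^{n!}}^*$ to $t^{\sum_k\langle\mu_k,\alpha_i^\vee\rangle p^{\gamma_{k,n}}}=t^{\sum_j\theta_{i,j}p^{g_{i,j,n}}}=t^{m_{i,n}}$ for $n\ge N_i$, hence — an element of $\widehat{\Bbbk^*}$ being determined by its restrictions to $\mathbb{F}_{p^{n!}}^*$ for $n$ large — it agrees with $\theta|_{{\bf T}_i}$.

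Finally, put $\psi:=\theta-\sum_k\theta_k^{\omega_k}\in\widehat{{\bf T}}$. Since ${\bf T}'$, being defined over $\mathbb{F}_p$, is $\mathbb{G}$-stable, one has $(\theta_k^{\omega_k})|_{{\bf T}'}=(\theta_k|_{{\bf T}'})^{\omega_k}=\mu_k^{\omega_k}$, so the previous paragraph gives $\psi|_{{\bf T}'}=0$, and likewise $(\psi^{\omega_1^{-1}})|_{{\bf T}'}=0$. Replacing $\theta_1$ by $\theta_1+\psi^{\omega_1^{-1}}$ therefore leaves $\theta_1|_{{\bf T}'}=\mu_1$ unchanged, so $\theta_1\in\mathbb{X}_1$ still, while now $\sum_k\theta_k^{\omega_k}=\theta$, as wanted; the $\omega_k$ are distinct by construction. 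The step I expect to be the real obstacle is the middle one — assembling the separate outputs of Lemma~\ref{red} into a single decomposition indexed by automorphisms, keeping all coefficients $\langle\mu_k,\alpha_i^\vee\rangle$ in $[0,p)$ (which is exactly where distinctness of the $g_{i,j,n}$ for fixed $i$ is used) and correctly matching a Galois twist of a rational character of $\Bbbk^*$ with the array it induces on each $\mathbb{F}_{p^{n!}}$.
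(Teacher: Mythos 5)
Your proof is correct and follows essentially the same route as the paper's: apply Lemma~\ref{red} to each $\theta|_{{\bf T}_i}$, group the resulting terms by the Galois automorphism they determine, observe that distinctness of the $g_{i,j,n}$ for fixed $i$ keeps each $\langle\mu_k,\alpha_i^\vee\rangle$ in $[0,p)$, lift through $\pi$ via Lemma~\ref{surj}, and absorb the $\ker\pi$ discrepancy into one summand by a twist. The only cosmetic difference is that you merge the paper's Steps~1 and~2 by working directly in $\widehat{{\bf T}'}\simeq\prod_i\widehat{{\bf T}_i}$ rather than first isolating the $SL_2$ and simply connected semisimple cases.
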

\begin{proof}
We proceed by 3 steps.

\smallskip
\noindent{\bf Step 1}: {\it Assume that ${\bf G}=SL_2(\Bbbk)$.}

\noindent In this case we have $\theta=(m_n)_{n\in\mathbb{N}^*}$. By Lemma \ref{red}, there are integers $0<\theta_1,\cdots,\theta_l<p$ and $N$ such that $m_n=\sum_{i=1}^l\theta_ip^{g_{i,n}}$ for all $n\ge N$, where all $g_{i,n}$ satisfy requirements in Lemma \ref{red} (ii). Equivalently, for any $n\ge N$, we have $\theta=\sum_{i=1}^l\theta_i^{[g_{i,n}]}$ when restricted to $\mathbb{F}_{p^{n!}}$, where $\theta_i$ is identified with the character $x\mapsto x^{\theta_i}$ and $[m]$ means the $m$-th Frobenius twist. For each $1\le i\le l$, let $(g_{i,n})_{n\ge N}$ corresponds to $\omega_i\in\mathbb{G}$ as in Remark \ref{Galois}. Thus, we have $\theta=\sum_{i=1}^l\theta_i^{\omega_i}$ when lifted to $\Bbbk^*$ as desired.

\smallskip
\noindent{\bf Step 2}: {\it Assume that ${\bf G}$ is simply connected and semisimple.}

\noindent By assumption, ${\bf T}$ is isomorphic to the direct product of ${\bf T}_i$ $(1\le i\le|I|)$. In this case we have the (unique) decomposition $\theta=\sum_{i=1}^{|I|}\lambda_i$ such that for each $i$, we have $\lambda_i|_{{\bf T}_j}=\op{tr}$ if $j\ne i$, and each $\lambda_i$ is identified with a character of ${\bf T}_i\simeq\Bbbk^*$. By Step 1, for each $1\le i\le |I|$, there exist $\lambda_{i,1},\cdots,\lambda_{i,l_i}\in \mathbb{X}_1$ and automorphisms $\tau_{i,1},\cdots,\tau_{i,l_i}\in\mathbb{G}$ such that (i) $\langle\lambda_{i,l_k},\alpha_j^\vee\rangle=0$ ($j\ne i,~1\le k\le l_i$); (ii) $\tau_{i,j}\ne\tau_{i,k}$ if $j\ne k$; (iii) $\lambda_i=\sum_{j=1}^{l_i}\lambda_{i,j}^{\tau_{i,j}}$.

\noindent It follows that
\begin{equation}\label{theta}
\theta=\sum_{{1\le i\le |I|}\atop{1\le j\le l_i}}\lambda_{i,j}^{\tau_{i,j}}=\sum_{\tau\in\mathbb{G}}(\sum_{\tau_{i,j}=\tau}\lambda_{i,j})^\tau.
\end{equation}
It is clear that $\sum_{\tau_{i,j}=\tau}\lambda_{i,j}\ne0$ for finitely many $\tau$ (we denote $\omega_1,\cdots,\omega_l$ for them). Write $\theta_k=\sum_{\tau_{i,j}=\omega_k}\lambda_{i,j}$. For each $t\in I$, (i) implies that $\langle\lambda_{i,j},\alpha_t^\vee\rangle=0$ if $i\ne t$, and (ii) implies that for each $t\in I$, there is at most one $1\le j_0\le l_t$ such that $\tau_{t,j_0}=\omega_k$, and hence $$\langle\theta_k,\alpha_t^\vee\rangle=\langle\sum_{\tau_{i,j}=\omega_k}\lambda_{i,j},\alpha_t^\vee\rangle
=\langle\sum_{\tau_{t,j}=\omega_k}\lambda_{t,j},\alpha_t^\vee\rangle\le\langle\lambda_{t,j_0},\alpha_t^\vee\rangle<p,$$ which implies that $\theta_k\in \mathbb{X}_1$ ($1\le k\le l$). Thus, (\ref{theta}) becomes $\theta=\sum_{k=1}^l\theta_k^{\omega_k}$.

\smallskip
\noindent{\bf Step 3}: {\it General case}.

\noindent Let $\pi:\widehat{\bf T}\rightarrow\widehat{{\bf T}'}$ be the restriction (cf. Section 1). Since $\pi$ commutes with $F$-action and $\mathcal{D}{\bf G}$ is simply connected and semisimple, Step 2 and Lemma \ref{surj} imply that $\pi(\theta)=\sum_{i=1}^l\pi(\mu_i)^{\omega_i}=\sum_{i=1}^l\pi(\mu_i^{\omega_i})$ for some $\mu_1,\cdots,\mu_l\in \mathbb{X}_1$ and distinct automorphisms $\omega_1,\cdots,\omega_l\in\mathbb{G}$. Lifting to $\widehat{\bf T}$, one obtains $\theta=\mu+\sum_{i=1}^l\mu_i^{\omega_i}$ for some $\mu=\operatorname{Ker}\pi$. Therefore, $\theta_1=\mu^{\omega_1^{-1}}+\mu_1$ and $\theta_i=\mu_i$ $(2\le i\le l)$ satisfy the requirement. This completes the proof.
\end{proof}

\section{Classification of simple modules with ${\bf B}$-stable line}
In this section, we will give the classification of irreducible $\Bbbk{\bf G}$-modules with ${\bf B}$-stable Line up to isomorphism. We will need the following result on the head and socle of the $\Bbbk G_n$-module $\Bbbk G_n{\bf 1}_\theta$.
\begin{Lem}\cite[4.6 and 6.1]{Jan2}\label{Soc}
Assume that ${\bf G}$ is semisimple. Let $\theta\in\widehat{\bf T}$ and $\lambda=\theta|_{T_n}$, and $\theta|_{T_{i,n}}\ne\op{tr}$ for any $i\in I$. Then

\noindent$(\op{i})$ $\Bbbk G_n{\bf 1}_\theta$ has simple head and socle;

\noindent$(\op{ii})$ There is a surjection $\Bbbk G_n{\bf 1}_\theta\rightarrow L(\lambda)$ sending ${\bf 1}_\theta$ to $v_\lambda$, the highest weight vector of $L(\lambda)$;

\noindent$(\op{iii})$ $\op{Soc}\Bbbk G_n{\bf 1}_\theta=\Bbbk G_n\underline{U_n}\dot{w_0}{\bf 1}_\theta$, where $w_0$ is the longest element in $W$.
\end{Lem}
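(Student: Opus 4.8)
The statement is attributed to Jantzen \cite[4.6 and 6.1]{Jan2}, so strictly speaking one could simply cite it; but here is how I would reconstruct the argument. The key input is Lemma \ref{fixedpt}: every nonzero $\Bbbk U_{w,a}$-module has a nonzero fixed point, and in particular every nonzero $\Bbbk G_n$-module $V$ has a nonzero $U_n$-fixed vector (apply the lemma with $w = e$ so that $U_{e,a} = U_a$). Combined with the Bruhat decomposition $G_n = \coprod_{w \in W} U_{w^{-1},n}\dot{w}B_n$, the $B_n$-stable line $\Bbbk{\bf 1}_\theta$ in $\mathbb{M}(\theta)$ restricts to give that $\Bbbk G_n{\bf 1}_\theta = \sum_{w\in W}\Bbbk U_{w^{-1},n}\dot{w}{\bf 1}_\theta$, a finite-dimensional $\Bbbk G_n$-module on which $U_n$ fixes ${\bf 1}_\theta$ and $T_n$ acts on ${\bf 1}_\theta$ through $\lambda = \theta|_{T_n}$. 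First I would establish $(\op{ii})$: the span of $U_n$-fixed vectors of weight $\lambda$ in any $\Bbbk G_n$-module is controlled, via the hypothesis $\theta|_{T_{i,n}}\neq\op{tr}$, by the fact that $\lambda$ is $p^{n!}$-restricted-regular enough that $L(\lambda)$ is the unique simple $\Bbbk G_n$-module with a nonzero $U_n$-fixed vector of that weight (Theorem \ref{restrictionthm}); so ${\bf 1}_\theta$ generates a module with a surjection onto $L(\lambda)$.

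For $(\op{i})$, the simple head is the harder of the two halves and is where I expect the real work. The module $\Bbbk G_n{\bf 1}_\theta$ is cyclic, generated by the $U_n$-fixed vector ${\bf 1}_\theta$ of weight $\lambda$; so any simple quotient is a simple $\Bbbk G_n$-module generated by the image of a $U_n$-fixed vector of weight $\lambda$, hence (again by the regularity forced by $\theta|_{T_{i,n}}\neq\op{tr}$ and Theorem \ref{restrictionthm}) isomorphic to $L(\lambda)$. To get that there is only one maximal submodule, I would argue that the $\lambda$-weight space of $\Bbbk G_n{\bf 1}_\theta$ that is $U_n$-fixed is one-dimensional, spanned by ${\bf 1}_\theta$ itself: any $U_n$-fixed vector lies in $\bigoplus_{w}\Bbbk U_{w^{-1},n}\dot w{\bf 1}_\theta$, and a weight/support computation using the commutator relations \eqref{commrel} and the $T_n$-action shows the only $U_n$-fixed weight-$\lambda$ line is $\Bbbk{\bf 1}_\theta$. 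Then any proper submodule misses ${\bf 1}_\theta$ but, being nonzero, has a $U_n$-fixed vector of some weight; if two distinct maximal submodules existed their sum would be everything, forcing their (semisimple) quotients to both be $L(\lambda)$ and producing two independent $U_n$-fixed weight-$\lambda$ vectors — contradiction. The socle statement is dual: $\op{Soc}\Bbbk G_n{\bf 1}_\theta$ is a sum of copies of $L(\lambda)$, each containing (by Lemma \ref{Soc}$(\op{iii})$'s mechanism) a $U_n$-lowest-weight vector; applying the longest element $\dot w_0$ and Lemma \ref{fixedpt} to the $U_n^- = \dot w_0 U_n \dot w_0^{-1}$-action pins the socle down, and one identifies the lowest-weight line with $\Bbbk\underline{U_n}\dot w_0{\bf 1}_\theta$ since $\underline{U_n}$ is exactly the projector onto $U_n$-coinvariants.

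For $(\op{iii})$ concretely: the element $\underline{U_n}\dot w_0{\bf 1}_\theta$ is $U_n^-$-fixed (because $\underline{U_n}$ kills the $U_n$-action from the left and $\dot w_0$ swaps $U_n \leftrightarrow U_n^-$), it is nonzero because in the Bruhat expansion it has a nontrivial component in the open cell $\Bbbk U_{w_0^{-1},n}\dot w_0{\bf 1}_\theta$ with top coefficient coming from $\sum_{u} \theta(\text{something}) = \pm 1$ via Lemma \ref{Binomial}$(\op{ii})$, and it has weight $w_0\lambda$. Hence $\Bbbk G_n\underline{U_n}\dot w_0{\bf 1}_\theta$ is a nonzero submodule isomorphic to (a quotient of, hence equal to) $L(\lambda)$; since the socle is simple by the dual of $(\op{i})$, this submodule \emph{is} the socle. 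The one genuinely delicate point throughout is verifying that the hypothesis $\theta|_{T_{i,n}}\neq\op{tr}$ for all $i$ is exactly what guarantees $\lambda = \theta|_{T_n}$ restricts to a weight for which $L(\lambda)$ is simple over $G_n$ and is the \emph{only} simple $G_n$-module with a $U_n$-fixed vector of that $T_n$-weight — i.e., translating "nontrivial on each $T_{i,n}$" into the restricted-regularity condition behind Theorem \ref{restrictionthm} — and I would isolate that as the first lemma-internal claim to nail down before assembling the head/socle arguments.
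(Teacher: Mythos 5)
The paper does not prove this lemma at all: it is stated as a citation to Jantzen \cite[4.6 and 6.1]{Jan2}, with no argument supplied. You note this yourself at the outset, and indeed simply citing would be the intended "proof" here. Since you go on to sketch a reconstruction, let me point out where it does not hold up.

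The critical gap is in your argument for simple head. You want to deduce uniqueness of the maximal submodule from the claim that the $U_n$-fixed, $T_n$-weight-$\lambda$ subspace of $\Bbbk G_n{\bf 1}_\theta$ is one-dimensional, spanned by ${\bf 1}_\theta$. That claim is false in general under the stated hypothesis. The full $U_n$-fixed subspace of $\Bbbk G_n{\bf 1}_\theta=\bigoplus_{w\in W}\Bbbk U_{w^{-1},n}\dot w{\bf 1}_\theta$ is $\bigoplus_{w\in W}\Bbbk\,\underline{U_{w^{-1},n}}\dot w{\bf 1}_\theta$, and the vector $\underline{U_{w^{-1},n}}\dot w{\bf 1}_\theta$ has $T_n$-weight $\theta^w|_{T_n}$. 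Hence the $\lambda$-weight part has dimension equal to the order of the stabilizer of $\lambda$ in $W$ (as a character of $T_n$, i.e.\ modulo $p^{n!}-1$). The hypothesis $\theta|_{T_{i,n}}\ne\op{tr}$ only forbids the \emph{simple} reflections $s_i$ from fixing $\lambda$; longer elements may still do so. For instance in $SL_3$ with $q=p^{n!}$, $3\mid q-1$, and $\lambda=(a,a)$ with $a=(q-1)/3$, one has $\langle\lambda,\alpha_i^\vee\rangle\not\equiv 0\ (\op{mod}\ q-1)$ for $i=1,2$, yet $s_1s_2$ fixes $\lambda$ modulo $q-1$, so the $U_n$-fixed $\lambda$-weight space is at least $3$-dimensional. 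The lemma is nevertheless true in this case (it is the $I(\theta)=\emptyset$ instance of Proposition \ref{prop45}), so the conclusion is right but your proposed route to it fails. Jantzen's actual argument is substantially more involved and does not reduce to regularity of $\lambda$ under the whole of $W$.

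A secondary, smaller point: in your sketch of (iii) you invoke Lemma \ref{Binomial}(ii) to show $\underline{U_n}\dot w_0{\bf 1}_\theta\ne 0$. No computation is needed: $\underline{U_n}\dot w_0{\bf 1}_\theta=\sum_{u\in U_n}u\dot w_0{\bf 1}_\theta$ is visibly a nonzero sum of distinct basis vectors in the big cell $\Bbbk U_{w_0^{-1},n}\dot w_0{\bf 1}_\theta$. The appeal to geometric-series cancellation is a red herring. And the parenthetical ``(by Lemma \ref{Soc}(iii)'s mechanism)'' inside your argument for (i) is circular. As presented, the reconstruction does not establish the lemma; the safe path here is the one the paper takes, namely citing Jantzen.
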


\noindent We start with the rank 1 case.

\begin{Lem}\label{sl2case}
Let ${\bf G}=SL_2(\Bbbk)$ or $PGL_2(\Bbbk)$ and $\theta=(m_n)_{n\in\mathbb{N}^*}\in\widehat{\bf T}$ $($In the $PGL_2$ case, one identifies $\theta$ with its pull back via surjection $\Bbbk^*\rightarrow{\bf T}$$)$. Then $\mathbb{M}(\theta)$ is irreducible if and only if $\theta\in\mathcal{X}_1$.
\end{Lem}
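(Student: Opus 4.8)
<br>

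The plan is to prove both directions of the equivalence separately. For the "only if" direction, I would show the contrapositive: if $\theta=(m_n)_{n\in\mathbb{N}^*}\in\mathcal{X}_0$, then $\mathbb{M}(\theta)$ is \emph{not} irreducible. If $\theta$ is trivial this is classical (the trivial submodule $\Bbbk\underline{U_{w_0,a}}\dot w_0{\bf 1}_\theta$-type construction, or rather a quotient map, gives a proper submodule, essentially the content of \cite{CD}). If $\theta$ is nontrivial, Lemma \ref{red} tells us $\theta|_{{\bf T}'}$ agrees with a sum of Frobenius-twisted restricted weights on $T_n$ for all large $n$, so for each large $n$ the $\Bbbk G_n$-module $\Bbbk G_n {\bf 1}_\theta$ surjects onto the finite-dimensional rational module $L(\lambda_n)$ with $\lambda_n=\theta|_{T_n}$ (Lemma \ref{Soc}(ii)), and more importantly $L(\lambda_n)$ is a \emph{proper} quotient because $\Bbbk G_n{\bf 1}_\theta$ has dimension of the order of $|U_{w_0^{-1},n}|\to\infty$ while $\dim L(\lambda_n)$ stays bounded as $n$ grows (the restricted parts are fixed and only the Frobenius twists move). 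Passing to the limit over $n$, one produces a proper $\Bbbk{\bf G}$-submodule of $\mathbb{M}(\theta)$ — concretely, $\sum_n \ker(\Bbbk G_n{\bf 1}_\theta \to L(\lambda_n))$, which is ${\bf G}$-stable because the kernels are compatible — so $\mathbb{M}(\theta)$ is reducible. This is the step I expect to require the most care: one must check the finite-level surjections are compatible under the inclusions $G_n\hookrightarrow G_{n+1}$ so that the union of kernels is genuinely a $\Bbbk{\bf G}$-submodule, and that it is neither $0$ nor everything.

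For the "if" direction, assume $\theta\in\mathcal{X}_1$, i.e. $f(m_n)$ is unbounded, equivalently condition (i) of Lemma \ref{padic} holds. I want to show $\mathbb{M}(\theta)$ is irreducible, which by the standard argument means showing every nonzero submodule $M$ contains ${\bf 1}_\theta$ (since $\Bbbk{\bf G}{\bf 1}_\theta=\mathbb{M}(\theta)$ as ${\bf 1}_\theta$ generates). Using the Bruhat decomposition $\mathbb{M}(\theta)=\sum_{w}\Bbbk{\bf U}_{w^{-1}}\dot w{\bf 1}_\theta$ and the filtration by length, the first reduction is: if $0\ne M\subseteq\mathbb{M}(\theta)$ then $M$ contains a nonzero element of the lowest cell, i.e. a multiple of ${\bf 1}_\theta$ up to lower-order. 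Concretely one takes a nonzero $v\in M$, writes $v=\sum_w u_w\dot w{\bf 1}_\theta$ with $u_w\in\Bbbk{\bf U}_{w^{-1}}$, picks $w$ maximal in the support, and hits $v$ with a suitable element to push down in the Bruhat order; here the key device is the $SL_2$-relation \eqref{sus} together with the vanishing sums $\sum_{t\in\mathbb{F}_{q}}t^k$ in Lemma \ref{Binomial}(ii) and the non-vanishing binomial $\binom{m_s}{k(p^{r!}-1)}\ne 0\pmod p$ from Lemma \ref{padic}(i). The point of the $\mathcal{X}_1$ hypothesis is precisely that when we apply $\dot s\, \underline{U_{s,a}}$ (or similar averaging operators) to $\dot w{\bf 1}_\theta$ for $w$ of length involving a simple reflection $s=s_i$, the action of $T$ on the relevant $U_{\alpha_i}$-coefficient is by some character $m_s$ of $\mathbb{F}_{p^{s!}}$, and the averaged coefficient is controlled by a sum $\sum_{t}t^{m_s}\binom{\cdots}{\cdots}$ which, by unboundedness of $f(m_s)$, can be made \emph{nonzero} by choosing $s$ large — this is exactly what fails in the $\mathcal{X}_0$ case and gives irreducibility here.

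So the skeleton is: (a) reduce irreducibility to "every nonzero submodule contains ${\bf 1}_\theta$"; (b) induct on Bruhat length, at each step using the rank-one relation \eqref{sus} inside the copy of $\Bbbk G_i$ for the appropriate $s_i$ together with an averaging over $U_{\alpha_i,s}$-points for $s\gg 0$; (c) invoke Lemma \ref{padic}(i) to guarantee the resulting scalar (a sum of the form $\sum_t t^{m_s}(\text{binomial coefficient})$) is nonzero in $\Bbbk$, so that the process strictly decreases the top cell without killing the vector; (d) conclude we reach $\Bbbk{\bf 1}_\theta\cap M\ne 0$, hence $M=\mathbb{M}(\theta)$. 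The main obstacle, as noted, is the bookkeeping in step (b)–(c): keeping track of which finite field $\mathbb{F}_{p^{s!}}$ the coefficients live over, ensuring the chosen $s$ works simultaneously for the reflection at hand (which one can, since only finitely many $r$ are relevant at each length-reduction step and Lemma \ref{padic}(i) lets $s$ be taken arbitrarily large), and verifying that the commutator relations \eqref{commrel} do not introduce cancellation that wipes out the leading term. I would handle this by ordering the roots so that the leading-term computation is clean and isolating a single $U_{\alpha_i}$-coordinate at a time, reducing everything to the genuinely rank-one computation already visible in \eqref{sus} and the binomial identities of Lemma \ref{Binomial}.
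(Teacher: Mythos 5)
Your two directions are handled quite differently, so let me treat them separately.

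Your ``only if'' direction is a genuinely different argument from the paper's, and it does work. The paper deduces reducibility for $\theta\in\mathcal{X}_0$ indirectly, by proving a chain of equivalences $(1)\Leftrightarrow(2)\Leftrightarrow(3)\Leftrightarrow(4)$ and noting that $(4)\Rightarrow(3)$ fails. You instead build an explicit finite-dimensional quotient: for $\theta\ne\operatorname{tr}$ in $\mathcal{X}_0$, the $p$-adic digit structure from Lemma~\ref{red} stabilizes, so the spaces $L(m_n)$ share a fixed underlying space (Steinberg tensor product), and the surjections $\Bbbk G_n{\bf 1}_\theta\to L(m_n)$ from Lemma~\ref{Soc}(ii) are compatible under $G_n\hookrightarrow G_{n+1}$ once one uses that Frobenius twists by multiples of $n!$ act trivially on $G_n$. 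The union of kernels is then a proper nonzero $\Bbbk{\bf G}$-submodule, since $\dim\Bbbk G_n{\bf 1}_\theta = 1+p^{n!}\to\infty$ while $\dim L(m_n)$ stays constant. This is essentially the construction that appears later in Proposition~\ref{ifpart}, repurposed; it is a clean alternative.

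Your ``if'' direction, however, has a real gap. You correctly identify the computational heart: the nonvanishing of $\pi(\underline{U_r}s{\bf 1}_\theta)$ in $\nabla(m_t)$, controlled by $\sum_a\binom{m_t}{l}a^l$ and hence by Lemma~\ref{Binomial}(ii) and Lemma~\ref{padic}(i); this is exactly what appears in (\ref{pi}) in the paper. But the structural step preceding it --- going from ``$M$ is a nonzero submodule'' to ``$\underline{U_a}\dot{s}{\bf 1}_\theta\in M$'' --- is not carried by the averaging/Bruhat-pushdown you sketch. Concretely: taking $0\ne v\in M$ and applying Lemma~\ref{fixedpt} yields a nonzero $U_a$-fixed element, necessarily of the form $c_0{\bf 1}_\theta+c_1\underline{U_a}\dot{s}{\bf 1}_\theta$, but if both $c_0,c_1\ne0$ you cannot extract the $\dot{s}$-component by hitting with $\underline{U_b}$ (that kills both pieces in characteristic $p$), nor always by $T_a$-weight projection (for the order-two character of $\Bbbk^*$ one has $\theta=\theta^s$ everywhere, yet this $\theta$ lies in $\mathcal{X}_1$). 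The paper's mechanism for this step is Lemma~\ref{Soc}(i),(iii): since $\Bbbk G_n{\bf 1}_\theta$ has \emph{simple} socle equal to $\Bbbk G_n\underline{U_n}\dot{w_0}{\bf 1}_\theta$ (once $\theta\ne\operatorname{tr}$), any nonzero $\Bbbk G_n$-submodule automatically contains $\underline{U_n}s{\bf 1}_\theta$. You cite Lemma~\ref{Soc}(ii) in the other direction but never invoke parts (i),(iii) here, and without them the reduction does not close. Once that ingredient is supplied, the rest of your outline matches the paper's $(3)\Leftrightarrow(4)$ computation.
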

\begin{proof}
Since $\mathbb{M}({\rm tr})$ is reducible (it contains infinite-dimensional Steinberg module as a proper submodule by \cite[Proposition 2.3]{Xi}). So it is enough to prove the result for $\theta\ne{\rm tr}$. We will prove this by showing that if $\theta\ne{\rm tr}$, then the following statements are equivalent:

(1) $\mathbb{M}(\theta)$ is irreducible;

(2) $\mathbb{M}(\theta)=\Bbbk{\bf G}\underline{U_r}s{\bf 1}_\theta$ for any $r\in\mathbb{N}^*$;

(3) For any $r\in\mathbb{N}^*$, there is a $t>r$, such that $\Bbbk G_t{\bf 1}_\theta=\Bbbk G_t\underline{U_r}s{\bf 1}_\theta$;

(4) $\theta\in\mathcal{X}_1$.

\medskip
(1) $\Rightarrow$ (2) is trivial.

(2) $\Rightarrow$ (1): Since $\theta\ne{\rm tr}$, there is an $N$ such that $m_{n}>0$ for all $n>N$. For such $n$, we have (as $\Bbbk G_n$-modules) $\op{Soc}\Bbbk G_n{\bf 1}_\theta$ is simple and equals to $\Bbbk G_n\underline{U_n}s{\bf 1}_\theta$ by Lemma \ref{Soc}. For any $x\in\mathbb{M}(\theta)$, we have $x\in\Bbbk G_r{\bf 1}_\theta$ for some $r>N$ since $\mathbb{M}(\theta)=\bigcup_{i>0}\Bbbk G_i{\bf 1}_\theta$. One has $\op{Soc}\Bbbk G_rx=\op{Soc}\Bbbk G_r{\bf 1}_\theta=\Bbbk G_r\underline{U_r}s{\bf 1}_\theta$ by the above discussion. Therefore, we have
$$\Bbbk{\bf G}x=\Bbbk{\bf G}(\Bbbk G_rx)\supset\Bbbk{\bf G}(\op{Soc}\Bbbk G_rx)=\Bbbk{\bf G}\underline{U_r}s{\bf 1}_\theta=\mathbb{M}(\theta),$$
which forces $\mathbb{M}(\theta)=\Bbbk{\bf G}x$, and hence $\mathbb{M}(\theta)$ is irreducible.

(2) $\Rightarrow$ (3): We have ${\bf 1}_\theta\in\Bbbk{\bf G}\underline{U_r}s{\bf 1}_\theta=\bigcup_{t>r}\Bbbk G_t\underline{U_r}s{\bf 1}_\theta$ by (2). It follows that ${\bf 1}_\theta\in\Bbbk G_t\underline{U_r}s{\bf 1}_\theta$ for some $t>r$, and hence $\Bbbk G_t{\bf 1}_\theta\subset\Bbbk G_t\underline{U_r}s{\bf 1}_\theta$, which forces $\Bbbk G_t{\bf 1}_\theta=\Bbbk G_t\underline{U_r}s{\bf 1}_\theta$.

(3) $\Rightarrow$ (2): Follows from
$$\mathbb{M}(\theta)=\Bbbk{\bf G}{\bf 1}_\theta=\Bbbk{\bf G}(\Bbbk G_t{\bf 1}_\theta)=\Bbbk{\bf G}(\Bbbk G_t\underline{U_r}s{\bf 1}_\theta)=\Bbbk{\bf G}\underline{U_r}s{\bf 1}_\theta.$$

(3) $\Leftrightarrow$ (4): Let $\pi$ be the composition map
$\Bbbk G_t{\bf 1}_\theta\rightarrow L(m_t)\rightarrow \nabla(m_t)$,
where the first is the natural projection sending ${\bf 1}_\theta$ to $v_0$ (recall $v_i$'s in the end of Subsection 1.2), and the second is inclusion. We claim that
\begin{center}(3) holds if and only if $\pi(\underline{U_r}s{\bf 1}_\theta)\neq0$.\end{center} The ``only if" part is clear. It remains to prove the ``if" part.

If $m_t=0$, then $\Bbbk G_t{\bf 1}_\theta$ is a direct sum of two irreducible $\Bbbk G_t$-modules, namely, $\Bbbk G_t{\bf 1}_\theta=\Bbbk({\bf 1}_\theta+\underline{U_t}s{\bf 1}_\theta)\oplus\Bbbk G_t(1-s){\bf 1}_\theta$ as $\Bbbk G_t$--modules, in which the left factor is trivial module and the right one is Steinberg module. Now $\pi(\underline{U_r}s{\bf 1}_\theta)\neq0$ implies that the projection of $\underline{U_r}s{\bf 1}_\theta$ to the left factor is nonzero. Moreover, since $\Bbbk G_t\underline{U_r}s{\bf 1}_\theta$ is not a trivial $\Bbbk G_t$-module, the projection of $\underline{U_r}s{\bf 1}_\theta$ to the right factor is also nonzero, and hence $\Bbbk G_t\underline{U_r}s{\bf 1}_\theta=\Bbbk G_t{\bf 1}_\theta$.

If $m_t\ne0$, then $\operatorname{Hd}\Bbbk G_t{\bf 1}_\theta=L(m_t)$ by Lemma \ref{Soc}, equivalently, there is a unique maximal $\Bbbk G_t$-submodule $M$ of $\Bbbk G_t{\bf 1}_\theta$. If $\pi(\underline{U_r}s{\bf 1}_\theta)\neq0$, then $M'=\Bbbk G_t\underline{U_r}s{\bf 1}_\theta\not\subset M$. This forces $M'=\Bbbk G_t{\bf 1}_\theta$ since any proper $\Bbbk G_t$-submodule is contained in $M$. Thus, the claim is proved.

Let $v_i~(0\leq i\leq m_t)$ be the basis of $\nabla(m_t)$ satisfying (\ref{H0}). Then
\begin{equation}\label{pi}
\pi(\underline{U_r}s{\bf 1}_\theta)=\sum_{a\in\mathbb{F}_{p^{r!}}}\begin{pmatrix}1&a\\ 0&1\end{pmatrix}v_{m_t}=\sum_{{0\leq l\leq m_t}\atop{a\in\mathbb{F}_{p^{r!}}}}\left({m_t}\atop l\right)a^lv_{m_t-l}
\end{equation}
by (\ref{H0}). Combining (\ref{pi}) and Lemma \ref{Binomial} (ii), $\pi(\underline{U_r}s{\bf 1}_\theta)\neq0$ is equivalent to $\displaystyle\left({m_t}\atop{k(p^{r!}-1)}\right)\neq0$ $(\operatorname{mod}p)$ for some $k\in\mathbb{N}^*$, and this holds if and only if $\theta\in\mathcal{X}_1$ by Lemma \ref{padic}.
\end{proof}

\begin{Rem}\label{rem1}
\normalfont Actually, the proof of Lemma \ref{sl2case} shows that if $\theta\in\mathcal{X}_0$, then for any $a\in\mathbb{N}$, there exists $b>a$ such that $\Bbbk G_b{\bf 1}_\theta=\Bbbk G_b\underline{U_a}s{\bf 1}_\theta$.
\end{Rem}

\noindent Lemma \ref{key}-\ref{ce} below are technical results used to prove Theorem \ref{hd=sim} and \ref{paraind} below.
\begin{Lem}\label{key}
Let $w\in W$ and $A=\{\alpha_1,\alpha_2,\dots, \alpha_m\}$ and $B=\{\beta_1,\beta_2,\dots, \beta_n\}$ be two disjoint subsets of $\Phi_{w^{-1}}^-$ $($recall its definition in 1.1$)$, and assume that $(\sum_{i=1}^m\mathbb{N}\alpha_i)\cap\Phi\subset A$.
Assume that $b>a$, and denote
$$\delta:=\underline{U_{\alpha_1,b}}\cdots\underline{U_{\alpha_m,b}}\cdot
\underline{U_{\beta_1,a}}\cdots\underline{U_{\beta_n,a}}\dot{w}{\bf 1}_\theta.$$

\noindent We have

\noindent$\operatorname{(i)}$ Assume that $(\mathbb{N}^*\beta_1+\mathbb{N}^*\alpha_i)\cap\Phi\subset A$ for all $1\le i\le m$. Then
$$x\delta=\underline{U_{\alpha_1,b}}\cdots\underline{U_{\alpha_m,b}}\cdot
x \underline{U_{\beta_1,a}}\cdots\underline{U_{\beta_n,a}}\dot{w}{\bf 1}_\theta$$
for any $x\in U_{\beta_1,b}$.

\noindent$\operatorname{(ii)}$ Let $\gamma\in\Phi_{w^{-1}}^+$ and assume that $$((\mathbb{N}^*\gamma+\sum_{i=1}^m\mathbb{N}\alpha_i+\sum_{i=1}^n\mathbb{N}\beta_i)\backslash\{\gamma\})\cap\Phi_{w^{-1}}^-\subset A.$$ Then $y\delta=\delta$ for any $y\in U_{\gamma,b}$.
\end{Lem}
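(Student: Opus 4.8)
The plan is to prove both statements by unwinding the tensor product $\Bbbk{\bf G}\otimes_{\Bbbk{\bf B}}\theta$ using the commutator relations \eqref{commrel} together with the defining relation that ${\bf U}$ acts trivially on ${\bf 1}_\theta$ while ${\bf T}$ acts by $\theta$. In both cases the root elements we wish to move or absorb lie in $\Phi_{w^{-1}}^-$ or $\Phi_{w^{-1}}^+$, so after conjugating past $\dot{w}$ they land in ${\bf U}$ (negative case $\dot{w}\varepsilon_\alpha(\cdot)\dot{w}^{-1}\in {\bf U}$ when $w\alpha\in\Phi^-$, i.e. $\alpha\in\Phi_{w^{-1}}^-$; and for $\gamma\in\Phi_{w^{-1}}^+$ the element $\dot{w}\varepsilon_\gamma(\cdot)\dot{w}^{-1}\in{\bf U}$ fixes ${\bf 1}_\theta$). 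The role of the hypothesis $(\sum_{i=1}^m\mathbb{N}\alpha_i)\cap\Phi\subset A$ is to guarantee closure: every extra root that arises as $m\alpha_i+n\alpha_j$ is again one of the $\alpha$'s, so the prefix $\underline{U_{\alpha_1,b}}\cdots\underline{U_{\alpha_m,b}}$ spans (a $\Bbbk$-multiple of) the group algebra of the finite group $U_{A,b}$ generated by $U_{\alpha_1,b},\dots,U_{\alpha_m,b}$, up to ordering — this makes it harmless to commute things past it and pick up correction terms supported on $A$.

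For part (i): I would take $x=\varepsilon_{\beta_1}(c)$ with $c\in\mathbb{F}_{p^{b!}}$ and move $x$ to the right past $\underline{U_{\alpha_1,b}}\cdots\underline{U_{\alpha_m,b}}$. Each commutator $[\varepsilon_{\beta_1}(c),\varepsilon_{\alpha_i}(d)]$ produces, via \eqref{commrel}, a product of terms $\varepsilon_{m\beta_1+n\alpha_i}(\ast)$ with $m,n\geq 1$; by hypothesis $(\mathbb{N}^*\beta_1+\mathbb{N}^*\alpha_i)\cap\Phi\subset A$, so these correction terms all live in some $U_{\alpha_j,b}$, and as $d$ ranges over $\mathbb{F}_{p^{b!}}$ they get reabsorbed into the prefix $\underline{U_{\alpha_1,b}}\cdots\underline{U_{\alpha_m,b}}$ (this is where I use that the prefix is, up to reordering and up to a scalar, $\underline{U_{A,b}}$, which is invariant under right multiplication by any $U_{\alpha_j,b}$). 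Hence $x$ passes through the $\alpha$-prefix cleanly, giving the displayed identity. One must also check the $x$ that emerges on the right is still in $U_{\beta_1,b}$ and acts on $\underline{U_{\beta_1,a}}\cdots\dot w{\bf 1}_\theta$ as stated — but that is just the same element $x$, now placed where the lemma wants it, so nothing further is needed.

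For part (ii): here the point is to show $\varepsilon_\gamma(c)\delta=\delta$ for $c\in\mathbb{F}_{p^{b!}}$ when $\gamma\in\Phi_{w^{-1}}^+$. I would commute $\varepsilon_\gamma(c)$ successively to the right through $\underline{U_{\alpha_i,b}}$ and $\underline{U_{\beta_j,a}}$; each commutator spits out terms $\varepsilon_{k\gamma+\ell\alpha_i+\cdots}(\ast)$, and by hypothesis every such root that lies in $\Phi_{w^{-1}}^-$ already lies in $A$, so it gets reabsorbed into the $\alpha$-prefix as in (i) — while any correction root still in $\Phi_{w^{-1}}^+$ can be carried along and, when it finally reaches $\dot w{\bf 1}_\theta$, becomes an element of ${\bf U}$ acting trivially. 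After $\varepsilon_\gamma(c)$ has been pushed all the way to the right it is conjugated past $\dot w$ into ${\bf U}$ (using $w\gamma\in\Phi^+$), hence fixes ${\bf 1}_\theta$, and we are left with $\delta$. The bookkeeping of which correction terms land in $\Phi_{w^{-1}}^-\subset A$ versus $\Phi_{w^{-1}}^+$ is exactly what the displayed containment hypothesis controls.

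The main obstacle is the combinatorial control of the iterated commutators: one must argue that repeatedly applying \eqref{commrel} never produces a root outside $A\cup B\cup\{\gamma\}$ in a problematic position, and that the "reabsorption into $\underline{U_{\alpha_1,b}}\cdots\underline{U_{\alpha_m,b}}$" step is legitimate despite the fixed ordering of the factors — i.e. that this ordered product is, up to scalar, genuinely the symmetric element $\underline{U_{A,b}}$ of $\Bbbk U_{A,b}$, which requires the closure hypothesis $(\sum_i\mathbb{N}\alpha_i)\cap\Phi\subset A$ and an induction on $\sum(\text{heights})$ or on $m$. Everything else is a careful but routine manipulation inside $\Bbbk{\bf G}\otimes_{\Bbbk{\bf B}}\theta$.
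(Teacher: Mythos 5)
Your part (i) is essentially the paper's argument: the closure hypothesis makes $U_{A,b}=\prod_i U_{\alpha_i,b}$ a group, the ordered product of group sums equals the single group sum $\underline{U_{A,b}}$, and the hypothesis on $(\mathbb{N}^*\beta_1+\mathbb{N}^*\alpha_i)\cap\Phi$ shows that $x$ normalizes $U_{A,b}$, hence commutes with $\underline{U_{A,b}}$. (You hedge about ``up to a scalar'' — no scalar is needed; unique factorization of $U_{A,b}$ gives equality on the nose.) So (i) is fine and matches the paper.

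For part (ii) there is a genuine gap. Your plan is to push $\varepsilon_\gamma(c)$ to the right, reabsorbing $A$-type corrections into the prefix and discarding $\Phi_{w^{-1}}^+$-type corrections at $\dot w{\bf 1}_\theta$. But ``reabsorbing into the prefix'' is not a local commutation fact — it is a statement about summation. The paper makes it precise as follows: for fixed $y\in U_{\gamma,b}$ and $g_2\in U_{B,a}$ one writes $yg_1g_2=\sigma(g_1)\,g_2\,z$ with $\sigma(g_1)\in U_{A,b}$ and $z\in{\bf U}_{w^{-1}}'$ (so $z\dot w{\bf 1}_\theta=\dot w{\bf 1}_\theta$), and the crux is that $g_1\mapsto\sigma(g_1)$ is \emph{injective}, hence a bijection on the finite group $U_{A,b}$. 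Injectivity uses ${\bf U}_{w^{-1}}\cap{\bf U}_{w^{-1}}'=\{1\}$: if $\sigma(g_1)=\sigma(g_1')$ then $g_2^{-1}g_1^{-1}g_1'g_2=z^{-1}z'\in{\bf U}_{w^{-1}}\cap{\bf U}_{w^{-1}}'=\{1\}$, so $g_1=g_1'$. Only then can you reindex the sum over $g_1$ and conclude $y\delta=\delta$. Your ``reabsorption'' step is exactly this bijectivity claim in disguise, but you never prove it; as written, the argument could equally ``prove'' the wrong identity if $\sigma$ were, say, two-to-one. There is also a positional wrinkle you gloss over: the $A$-type corrections produced by commuting $y$ past the $\beta$-factors appear to the \emph{right} of the $A$-prefix, and moving them back past the $\beta_j$'s spawns further commutators; the paper's simultaneous rewriting $yg_1g_2=\sigma(g_1)g_2z$ sidesteps this, whereas your stepwise push would need additional bookkeeping to terminate. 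In short: the intuition is right, but the missing ingredient is the injectivity (via uniqueness in ${\bf U}={\bf U}_{w^{-1}}'{\bf U}_{w^{-1}}$), and without it part (ii) is not proved.
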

\begin{proof}
The proof is similar to that of \cite[Lemma 4.5]{CD2}. We rewrite the proof for the convenience of readers. It is clear that $U_{A,b}=\prod_{i=1}^mU_{\alpha_i,b}$ is a group since $(\sum_{i=1}^m\mathbb{N}\alpha_i)\cap\Phi\subset A$.

\medskip
\noindent$\op{(i)}$ By (\ref{commrel}) and assumption, we have $xu_ix^{-1}u_i^{-1}\in U_{A,b}$ (and hence $xu_ix^{-1}\in U_{A,b}$) for all $1\le i\le m$ and $u_i\in U_{\alpha_i,b}$. Thus, we have $xU_{A,b}x^{-1}=U_{A,b}$ and hence $x$ commutes with $\underline{U_{A,b}}=\underline{U_{\alpha_1,b}}\cdots\underline{U_{\alpha_m,b}}$ which proves (i).

\medskip

\noindent $\op{(ii)}$ By assumption and applying (\ref{commrel}) repeatedly, for any $y\in U_{\gamma,b}$, $g_1\in U_{A,b}$, and $g_2\in U_{B,a}=\prod_{1\leq i\leq n}U_{\beta_i,a}$, we have
\begin{equation}\label{==}
yg_1g_2=\sigma(g_1)g_2z,
\end{equation}
where $\sigma(g_1)\in U_{A,b}$ and $z\in{\bf U}_{w^{-1}}'$. We claim that the map $g_1\mapsto\sigma(g_1)$ (for fixed $y$ and $g_2$) is injective. Indeed, assume that $\sigma(g_1)=\sigma(g_1')$ for some $g_1'\in U_{A,b}$. Since $yg_1'g_2=\sigma(g_1)g_2z'$ for some $z'\in{\bf U}_{w^{-1}}'$, we have
\begin{equation}\label{=}
g_2^{-1}g_1^{-1}g_1'g_2=z^{-1}z'.
\end{equation}
It follows from equation (\ref{=}) that $g_2^{-1}g_1^{-1}g_1'g_2\in {\bf U}_{w^{-1}}\cap{\bf U}_{w^{-1}}'=\{1\}$, and hence $g_1=g_1'$ which proves the claim. Since $z\dot{w}{\bf 1}_\theta=\dot{w}{\bf 1}_\theta$ for any $z\in{\bf U}_{w^{-1}}'$, we have $y\delta=\delta$ for any $y\in U_{\gamma,b}$ thanks to equation (\ref{==}) and the injectivity of $\sigma$.
\end{proof}

For each $\theta\in\widehat{\bf T}$, define
$$I(\theta):=\{i\in I\mid\theta|_{{\bf T}_i}=\op{tr}\}.$$
\begin{Lem}\label{forsomew}
Let $a\in\mathbb{N}^*$, $\theta\in\widehat{\bf T}$ and $J\subset I(\theta)$, and let $\xi=\sum_{w\in W_J}c_w\underline{U_{w^{-1},a}}\dot{w}{\bf 1}_\theta\ne0$ $(c_w\in\Bbbk)$. Let $Y_\xi=\{w\in W_J|c_w\ne0\}$. Then
$$\Bbbk{\bf G}\xi=\Bbbk{\bf G}\underline{U_{w'^{-1},c}}\dot{w'}{\bf 1}_\theta$$
for some $w'\in Y_\xi$ and $c>a$.
\end{Lem}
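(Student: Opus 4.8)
\textbf{Proof plan for Lemma \ref{forsomew}.}

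The plan is to reduce the general element $\xi$ to a single ``Bruhat cell'' term $\underline{U_{w'^{-1},c}}\dot{w'}{\bf 1}_\theta$ by successively annihilating all but one of the $W_J$-components, exploiting the hypothesis $J\subset I(\theta)$. The key point that $J\subset I(\theta)$ buys us is that on each ${\bf G}_j$ with $j\in J$ the character $\theta$ restricts trivially, so $\mathbb{M}(\theta)$ restricted to ${\bf G}_J$ looks like the induced module $\mathbb{M}_{{\bf B}_J}(\mathrm{tr})$ for the trivial character, and inside $\Bbbk G_{J,t}$ (for large $t$) one has the $SL_2$/rank-one type dichotomy: applying the symmetrizer $\underline{U_{j,t}}$ (or a Weyl-group element) to a Bruhat term either kills its ``top'' part or promotes it to a term supported on a longer Weyl element. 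Concretely, I would first pick $w'\in Y_\xi$ of \emph{maximal} length in $Y_\xi$, and aim to show $\Bbbk{\bf G}\xi$ contains (hence equals, by irreducibility-type arguments, or just contains — we only need $\supseteq$ in one direction and the reverse is automatic since $\underline{U_{w'^{-1},c}}\dot{w'}{\bf 1}_\theta\in\Bbbk{\bf G}\xi$ once produced) a term $\underline{U_{w'^{-1},c}}\dot{w'}{\bf 1}_\theta$.

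The main technical engine is Lemma \ref{key}: applying a suitable product of symmetrizers $\underline{U_{\alpha,b}}$ over roots $\alpha\in\Phi^-_{w^{-1}}$ (with $b>a$ chosen large) to $\underline{U_{w^{-1},a}}\dot{w}{\bf 1}_\theta$ either leaves it invariant or, after composing with an appropriate $\dot{s_j}$ for $j\in J$, converts it into a term attached to $s_jw$ or $w$. Since the $c_w$ for distinct $w\in W_J$ sit in distinct Bruhat cells ${\bf U}_{w^{-1}}\dot{w}{\bf B}$, the corresponding summands are ``linearly independent modulo lower cells'', so I can separate them: choose an element $g\in\Bbbk{\bf G}$ (a product of symmetrizers over enlarged finite fields, times possibly one $\dot{s_j}$) that acts as identity on the $w'$-term and annihilates, or pushes into cells indexed by elements $\notin Y_\xi$, all the other terms. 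This is the same bookkeeping as in \cite[Lemma 4.5]{CD2} and its surrounding arguments, generalized from a single simple reflection to the parabolic subgroup $W_J$. After applying such $g$ we land in $\Bbbk G_{c}\underline{U_{w'^{-1},c}}\dot{w'}{\bf 1}_\theta$ plus possibly lower-order garbage that a further symmetrizer (using that $\theta|_{{\bf T}_j}=\mathrm{tr}$, so the rank-one Remark \ref{rem1} applies inside each ${\bf G}_j$, $j\in J$) can be made to clean up, yielding exactly $\underline{U_{w'^{-1},c}}\dot{w'}{\bf 1}_\theta\in\Bbbk{\bf G}\xi$; taking $\Bbbk{\bf G}$-spans gives the claimed equality since the reverse inclusion is trivial (the produced vector visibly lies in $\Bbbk{\bf G}\xi$).

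The step I expect to be the main obstacle is the bookkeeping of which roots to symmetrize over and in which order, so that Lemma \ref{key}(i) and (ii) genuinely apply — i.e. verifying the combinatorial hypotheses ``$(\sum\mathbb{N}\alpha_i)\cap\Phi\subset A$'' and ``$(\mathbb{N}^*\gamma+\cdots)\cap\Phi_{w^{-1}}^-\subset A$'' for the sets $A,B$ one actually needs, while simultaneously controlling how $\dot{s_j}$ (for $j\in J$) moves a Bruhat term from cell $w$ to cell $s_jw$ using the rank-one relation \eqref{sus}. In other words, the hard part is the interplay between (a) the Chevalley commutator relations \eqref{commrel} that govern when symmetrizers commute past each other and fix $\dot{w}{\bf 1}_\theta$, and (b) the length-function combinatorics of $W_J$ that decides when multiplying by $\dot{s_j}$ shortens or lengthens $w$; this is precisely where the assumption $J\subset I(\theta)$ is indispensable, because it is what makes the rank-one module $\Bbbk G_{j,t}{\bf 1}_\theta$ behave like the trivial-character module, allowing the ``promote to longer element'' move to succeed. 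Once the maximal-length $w'$ is isolated, no further cancellation can occur (there are no longer cells in $Y_\xi$ to interfere), which is why maximal length is the right choice for $w'$.
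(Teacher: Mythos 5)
Your plan is in the right ballpark in that it appeals to Lemma~\ref{key} and to an inductive/annihilation scheme, but the mechanism you propose is not the one that works, and several of your structural choices would get you into trouble.

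\textbf{No Weyl-group elements are applied at this stage.} You propose to compose with $\dot{s_j}$ ($j\in J$) so as to ``move a Bruhat term from cell $w$ to cell $s_jw$'' using the rank-one relation \eqref{sus}. The paper's proof of Lemma~\ref{forsomew} never does this, and for good reason: applying $\dot{s_j}$ via \eqref{sus} spreads a single term into a sum over $t\in\mathbb{F}_{q}^*$ with torus twists $h_j(t)$, which destroys the clean decomposition by Bruhat cells that the whole argument relies on. The proof keeps every summand in its own cell ${\bf U}_{w^{-1}}\dot{w}{\bf B}$ throughout; it only ever multiplies by sums of coset representatives $\underline{I_{\alpha}}$ of $U_{\alpha,a}$ inside $U_{\alpha,b}$. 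The dichotomy is then Lemma~\ref{key}\,(i) versus (ii): if the root $\gamma$ in question lies in $\Phi_{w^{-1}}^-$ the multiplication ``promotes'' the corresponding symmetrizer from level $a$ to level $b$, whereas if $\gamma\notin\Phi_{w^{-1}}^-$ (but $\gamma\in\Phi_{w^{-1}}^+$) the whole term picks up a factor $q^{b-a}=0$ in characteristic $p$ and dies. That kill/promote dichotomy, not a $\dot{s_j}$-move, is the engine. The simple-reflection moves you have in mind are exactly the content of the \emph{subsequent} Lemma~\ref{c2}, which is applied after this lemma has already isolated a single cell.

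\textbf{The surviving $w'$ is not the one of maximal length.} Your heuristic --- isolate the maximal-length $w'$ because ``there are no longer cells to interfere'' --- does not match the combinatorics and is not justified. The proof orders the roots $\Phi_\xi=\bigcup_{w\in Y_\xi}\Phi_{w^{-1}}^-$ by decreasing \emph{height}, and at each inductive step picks the first root $\alpha_{d+l}$ not in $\bigcap_{w\in Y}\Phi_{w^{-1}}^-$. Those $w$ whose $\Phi_{w^{-1}}^-$ misses $\alpha_{d+l}$ are killed and the rest survive; the $w'$ that ultimately remains is determined by this height-ordered elimination, which has no simple relation to $\ell(w')$. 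Committing in advance to a maximal-length $w'$ is a step your proposal would not be able to carry out.

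\textbf{The role of $J\subset I(\theta)$ is overstated here.} Your argument leans on $\theta|_{{\bf T}_j}=\mathrm{tr}$ for $j\in J$ to invoke a trivial-character rank-one picture and Remark~\ref{rem1}. But the paper's proof of \emph{this} lemma never touches $\theta|_{{\bf T}_j}$; it only uses the universal fact that ${\bf U}_{w^{-1}}'$ fixes $\dot{w}{\bf 1}_\theta$ and the commutator relations \eqref{commrel}. The hypothesis $J\subset I(\theta)$ matters for the statement's downstream uses (Lemma~\ref{ce}, Theorem~\ref{paraind}), not for the annihilation argument itself. Building your proof around that hypothesis signals that you are importing machinery (rank-one reduction, Remark~\ref{rem1}) that belongs to Lemma~\ref{c2}\,(ii) rather than to the present lemma.

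Finally, the base case of the induction is handled by a $p$-group fixed-point argument (Lemma~\ref{fixedpt}) inside $(\Bbbk U_b\dot{w}{\bf 1}_\theta)^{U_b}=\Bbbk\underline{U_{w^{-1},b}}\dot{w}{\bf 1}_\theta$; your plan has no analogue of this step, which is what actually produces the clean vector $\underline{U_{w'^{-1},c}}\dot{w'}{\bf 1}_\theta$ rather than just something lying in its span modulo garbage.
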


\begin{proof}
The proof is similar to that of \cite[Lemma 4.7]{CD3}. We give the proof for the convenience of readers.

Let $M=\Bbbk{\bf G}\xi$ and $\Phi_{\xi}=\bigcup_{w\in Y_\xi}\Phi_{w^{-1}}^-$. We fix an order in $\Phi_\xi$ such that $\Phi_\xi=\{\beta_1,\cdots,\beta_m\}$ with $\op{ht}(\beta_1)\geq\cdots\geq\op{ht}(\beta_m)$.

Let $b>a$. For each $w\in Y_\xi$, write $\Phi_{w^{-1}}^-=\{\gamma_1,\cdots,\gamma_t\}$ with the order inherited from $\Phi_\xi$ (In particular, $\op{ht}(\gamma_1)\geq\cdots\geq\op{ht}(\gamma_t)$). For any $0\leq d\leq t$, set
$$\Theta(w,d,b,a):=\underline{U_{\gamma_1,b}}\cdots\underline{U_{\gamma_d,b}}\cdot
\underline{U_{\gamma_{d+1},a}}\cdots\underline{U_{\gamma_t,a}},$$
if $d>0$ and
$$\Theta(w,0,b,a):=\underline{U_{\gamma_1,a}}\cdots\underline{U_{\gamma_t,a}}.$$
The result follows from the following claim (applied to $Y=Y_\xi$ and $d=0$) whose proof is similar to that of \cite[Proposition 4.3]{CD2}.

\smallskip
\noindent{\it Claim: }{\it Let $Y$ be a nonempty subset of $Y_{\xi}$ and $\Phi_Y=\bigcup_{w\in Y}\Phi_{w^{-1}}^-=\{\alpha_1,\cdots,\alpha_n\}$ with the order inherited from $\Phi_\xi$ $($In particular $\op{ht}(\alpha_1)\geq\cdots\geq\op{ht}(\alpha_n)$$)$, and let $d\geq 0$ be an integer such that $\alpha_1,\dots,\alpha_d\in\bigcap_{w\in Y}\Phi_{w^{-1}}^-$. Assume that $b>a$ and $$\xi_d:=\sum_{w\in Y}c_w\Theta(w,d,b,a)\dot{w}{\bf 1}_\theta\in M.$$ Then $\underline{U_{w^{-1},b}}\dot{w}{\bf 1}_\theta\in M$ for some $w\in Y$}.

\noindent{\it Proof of the claim:} We will prove this claim by the induction on $|Y|$.
If $|Y|=1$, then $\xi_d=c\Theta(w,d,b,a)\dot{w}{\bf 1}_\theta\in M$ for some $c\in\Bbbk^*$ and $w\in Y$. We consider the $\Bbbk U_{b}$-module $N=\Bbbk U_{b} \Theta(w,d,b,a)\dot{w}{\bf 1}_\theta\subset M$. Clearly, $N^{U_{b}}\neq0$ by Lemma \ref{fixedpt}. Note that $N^{U_{b}}\subseteq(\Bbbk U_{b}\dot{w}{\bf 1}_\theta)^{U_{b}}= \Bbbk\underline{U_{w^{-1},b}}\dot{w}{\bf 1}_\theta$, then $\underline{U_{w^{-1},b}}\dot{w}{\bf 1}_\theta\in M$.

\smallskip
Assume that $|Y|>1$. Let $I_i$ be a set of left coset representatives of $U_{\alpha_i,a}$ in $U_{\alpha_i,b}$. Let $l$ be the minimal number such that $\alpha_{d+l}\not\in\Phi_{w^{-1}}^-$ for some $w\in Y$. Since $\Phi_{w_1}^-\neq\Phi_{w_2}^-$ if $w_1\neq w_2$, such $l$ always exists.

If $w\in Y$ and $\alpha_{d+l}\not\in\Phi_{w^{-1}}^-$, by our assumption on the order in each $\Phi_{w^{-1}}^-$ and applying Lemma \ref{key} (i) to $A=\{\gamma_1,\cdots,\gamma_{d+i}\}$, $B=\{\gamma_{d+i+1},\cdots,\gamma_t\}$ and $\beta=\gamma_{d+i+1}$ yields
\begin{equation}\label{d+i+1}
\underline{I_{d+i+1}}\Theta(w,d+i,b,a)\dot{w}{\bf 1}_\theta
=\Theta(w,d+i+1,b,a)\dot{w}{\bf 1}_\theta
\end{equation}
for all $0\le i<l-1$, and applying
Lemma \ref{key} (ii) to $A=\{\gamma_1,\cdots,\gamma_{d+l-1}\}$, $B=\{\gamma_{d+l},\cdots,\gamma_t\}$ and $\gamma=\gamma_{d+l}$ yields
\begin{equation}\label{d+l}
\underline{I_{d+l}}\Theta(w,d+l-1,b,a)\dot{w}{\bf 1}_\theta=q^{b-a}\Theta(w,d+l-1,b,a)\dot{w}{\bf 1}_\theta=0
\end{equation}
since $\op{char}\Bbbk=p$ and $b\neq a$.
Thus, combining (\ref{d+i+1}) and (\ref{d+l}) yields
\begin{equation}\label{=0}
\underline{I_{d+l}}\cdots\underline{I_{d+1}}\Theta(w,d,b,a)\dot{w}{\bf 1}_\theta=0.
\end{equation}

If $w\in Y$ and $\alpha_{d+l}\in\Phi_{w^{-1}}^-$, we have
\begin{equation}\label{!=0}
\underline{I_{d+l}}\cdots\underline{I_{d+1}}\Theta(w,d,b,a)\dot{w}{\bf 1}_\theta
=\Theta(w,d+l,b,a)\dot{w}{\bf 1}_\theta
\end{equation}
by Lemma \ref{key} (i). Denote $\xi_{d+l}:=\underline{I_{d+l}}\cdots\underline{I_{d+1}}\xi_d\in M$ and let $Y'$ be the set of $w\in Y$ such that the coefficient of $\Theta(w,d+l,b,a)\dot{w}{\bf 1}_\theta$ in $\xi_{d+l}$ is nonzero. Combining (\ref{=0}), (\ref{!=0}), and the minimality of $l$, we see that $\xi_{d+l}\neq 0$ (equivalently, $Y'$ is nonempty) and  $Y'\subsetneq Y$ (In particular $|Y'|<|Y|$). Notice that $\{\alpha_1,\cdots,\alpha_{d+l}\}\subset\bigcap_{w\in Y'}\Phi_{w^{-1}}^-$, The claim follows from applying the induction hypothesis to $Y'$ and $\xi_{d+l}$.
\end{proof}

\begin{Rem}\label{wnee}
\normalfont
Keeping the notation as in Lemma \ref{forsomew}, then the proof of Lemma \ref{forsomew} indicates that if $c_e\ne0$ and $c_w\ne0$ for some $w\ne e$, then $w'\ne e$.
\end{Rem}

\noindent For any $\theta\in\widehat{\bf T}$ and $w\in W$, define $\theta^w\in\widehat{\bf T}$ by
$\theta^w(t)=\theta(\dot{w}^{-1}t\dot{w})$ for any $t\in{\bf T}$.
\begin{Lem}\label{c2}
Let $k\in I$, $w\in W$ and $\theta\in\widehat{\bf T}$ and assume that $s_kw>w$. Suppose that one of the following holds:

\noindent$(\operatorname{i})$ $({\bf U}_{w^{-1}})^{s_k}\ne{\bf U}_{w^{-1}}$;

\noindent$(\operatorname{ii})$ $({\bf U}_{w^{-1}})^{s_k}={\bf U}_{w^{-1}}$ and $\theta^w|_{{\bf T}_k}\in\mathcal{X}_1$,

\noindent where the superscript $s_k$ means conjugation. Then for any $a\in\mathbb{N}$, we have
$$\Bbbk{\bf G}\underline{U_{w^{-1},b}}\dot{w}{\bf 1}_\theta=\Bbbk{\bf G}\underline{U_{w^{-1}s_k,a}}\dot{s_k}\dot{w}{\bf 1}_\theta$$
for some $b>a$.
\end{Lem}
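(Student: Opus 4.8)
The plan is to reduce everything to a rank-one calculation inside the $SL_2$- or $PGL_2$-subgroup ${\bf G}_k$, using the structure of the Bruhat cell and the relation \eqref{sus}. First I would observe that since $s_kw>w$, we have $w^{-1}(-\alpha_k)=w^{-1}s_k(\alpha_k)\in\Phi^+$, i.e. $\alpha_k\notin w\Phi_{w^{-1}}^-$; equivalently $w^{-1}(-\alpha_k)\in\Phi_{(w^{-1}s_k)}^-$ but $\alpha_k$ itself behaves well under the conjugation by $\dot s_k$ on the left. Concretely, ${\bf U}_{w^{-1}s_k}=(\dot s_k\text{-conjugate of }{\bf U}_{w^{-1}})\cdot U_{\alpha_k}$ (up to reordering), so $\dot s_k{\bf U}_{w^{-1}s_k}\dot s_k^{-1}$ decomposes into the piece $\dot s_k U_{\alpha_k}\dot s_k^{-1}=U_{-\alpha_k}$ and a piece conjugate to ${\bf U}_{w^{-1}}$. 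The inclusion "$\subseteq$" of the two cyclic modules is the easy direction: $\dot s_k\dot w{\bf 1}_\theta$ differs from $\dot w{\bf 1}_\theta$ by a coset representative, and $\underline{U_{w^{-1}s_k,a}}\dot s_k\dot w{\bf 1}_\theta$ lies in $\Bbbk G_{a'}\underline{U_{w^{-1},b}}\dot w{\bf 1}_\theta$ for suitable $b>a$ by a direct Bruhat-cell manipulation, hence $\Bbbk{\bf G}\underline{U_{w^{-1}s_k,a}}\dot s_k\dot w{\bf 1}_\theta\subseteq\Bbbk{\bf G}\underline{U_{w^{-1},b}}\dot w{\bf 1}_\theta$; but actually I expect the statement wants the reverse containment to be the substantive one, so I would prove "$\supseteq$" carefully and get equality by symmetry of the roles after conjugating by $\dot s_k$ (which swaps the two generators up to adjusting $\theta\rightsquigarrow\theta^{s_k}$, $w\rightsquigarrow s_kw$).

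For the core step I would write $\underline{U_{w^{-1},b}}\dot w{\bf 1}_\theta$ and apply $\underline{U_{-\alpha_k,a}}$ (or $\dot s_k$-translate thereof) to it, pushing the new unipotent factor through $\dot w$ using the commutator relations \eqref{commrel} and relation \eqref{sus}. In case (i), where $({\bf U}_{w^{-1}})^{s_k}\neq{\bf U}_{w^{-1}}$, there is some positive root $\beta\in\Phi_{w^{-1}}^-$ with $s_k\beta\neq\beta$, i.e. $\langle\beta,\alpha_k^\vee\rangle\neq 0$; this provides a genuine "motion" that, via the commutator relations, lets one manufacture $\underline{U_{w^{-1}s_k,a}}\dot s_k\dot w{\bf 1}_\theta$ (or produce $\dot s_k\dot w{\bf 1}_\theta$ from $\underline{U_{w^{-1},b}}\dot w{\bf 1}_\theta$) after hitting with elements of $U_{\alpha_k,b}$ and summing — essentially Lemma \ref{key}(i)--(ii) handle the bookkeeping of which root subgroups commute past which. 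In case (ii), where ${\bf U}_{w^{-1}}$ is $s_k$-stable, there is no such motion from the positive roots, so the only available input is the $SL_2$-theory for ${\bf G}_k$ acting through $\theta^w|_{{\bf T}_k}$; here the hypothesis $\theta^w|_{{\bf T}_k}\in\mathcal{X}_1$ is exactly what Lemma \ref{sl2case} (or Remark \ref{rem1} in reverse) needs to guarantee that $\Bbbk G_{k}$-translates of $\underline{U_{w^{-1},b}}\dot w{\bf 1}_\theta$ recover $\dot s_k\dot w{\bf 1}_\theta$-type vectors, and then one transports this along $\dot w$.

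After establishing "$\supseteq$" I would close the loop: apply the already-proven containment to the triple $(s_kw,\ \theta,\ \text{and the root }\alpha_k)$, noting $s_k(s_kw)=w<s_kw$ is false, so instead conjugate the whole situation by $\dot s_k$ — this sends ${\bf U}_{w^{-1}}\mapsto({\bf U}_{w^{-1}})^{s_k}$, $\theta\mapsto$ a twist, and turns the two cyclic generators into each other — and invoke the fact that conjugation by $\dot s_k\in G_1$ is an automorphism of $\Bbbk{\bf G}$ preserving the $\Bbbk{\bf G}$-submodule generated by a vector up to the isomorphism $\mathbb{M}(\theta)\simeq\mathbb{M}_{\dot s_k{\bf B}\dot s_k^{-1}}(\theta')$ from Section 1. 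The main obstacle I anticipate is the precise combinatorics in case (i): choosing the ordering of roots in $\Phi_{w^{-1}}^-$ and $\Phi_{w^{-1}s_k}^-$ so that the hypotheses of Lemma \ref{key}(i) and (ii) are literally satisfied at each stage (disjointness of $A$, $B$, the $\mathbb{N}$-span conditions), and checking that the "extra" root subgroups created by commutators land in the part of ${\bf U}_{w^{-1}}'$ that kills $\dot w{\bf 1}_\theta$ — this is the same kind of careful height-ordering argument as in \cite[Lemma 4.5, Proposition 4.3]{CD2}, and I would follow that template, isolating $\alpha_k$ (which, by $s_kw>w$, is the unique simple root that can move the relevant cell) and treating $\langle\beta,\alpha_k^\vee\rangle\neq 0$ versus $=0$ as the dividing line between the two cases.
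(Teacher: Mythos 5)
Your identification of the toolkit is right in outline — $SL_2$-theory via Remark \ref{rem1} for case (ii), commutator relations plus Lemma \ref{key} for case (i) — and your case (ii) sketch matches the paper's reduction to ${\bf G}_k$ acting through $\theta^w|_{{\bf T}_k}$. But there are two genuine gaps.

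First, the case (i) argument is too vague to carry out as stated. "Hitting with elements of $U_{\alpha_k,b}$ and summing" is not the mechanism. The paper's proof starts by applying $\dot{s_k}^{-1}$ to the generator and expanding via relation (\ref{sus}) to obtain
$\dot{s_k}^{-1}\underline{U_{w^{-1}s_k,a}}\dot{s_k}\dot{w}{\bf 1}_\theta=\underline{U_{w^{-1},a}}\dot{w}{\bf 1}_\theta+\xi$,
where $\xi$ is a weighted sum over $t\in\mathbb{F}_{p^{a!}}^*$ of terms $\theta^w(h_k(-t^{-1}))\varepsilon_{\alpha_k}(t)\underline{(U_{w^{-1},a})^{s_k}}\dot{s_k}\dot{w}{\bf 1}_\theta$. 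The first summand is the unwanted one; it is annihilated by multiplying by coset representatives $\underline{D}\cdot\underline{C}$, where $\gamma\in s_k\Phi_{w^{-1}}^-\setminus\Phi_{w^{-1}}^-$ is chosen of maximal height so that Lemma \ref{key}(ii) applies with $A=\Gamma=\{\alpha\in s_k\Phi_{w^{-1}}^-\cap\Phi_{w^{-1}}^-\mid\operatorname{ht}(\alpha)\ge\operatorname{ht}(\gamma)\}$ and $B=\Phi_{w^{-1}}^-\setminus\Gamma$. The leftover $\underline{D}\cdot\underline{C}\cdot\xi$ is nonzero, and one then extracts $\xi'$ (the $b$-level analogue of $\xi$) as the $(U_{w^{-1},b})^{s_k}$-fixed part of $\Bbbk(U_{w^{-1},b})^{s_k}\underline{D}\cdot\underline{C}\cdot\xi$, and recovers $\underline{U_{w^{-1},b}}\dot{w}{\bf 1}_\theta$ from $\xi'$ via the $b$-level version of the (\ref{sus}) expansion. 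Without this specific choice of $\gamma$ and the cancellation/fixed-point extraction, your outline doesn't close.

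Second, the "close the loop by conjugating by $\dot{s_k}$" step cannot give the reverse containment. Inner automorphisms of ${\bf G}$ act trivially on $\Bbbk{\bf G}$-submodules (a submodule is ${\bf G}$-stable), and the module isomorphism $\mathbb{M}(\theta)\simeq\mathbb{M}_{\dot{s_k}{\bf B}\dot{s_k}^{-1}}(\theta')$ from Section 1 identifies cyclic submodules but does not exchange the two generators in question. What the paper actually establishes is the single containment $\Bbbk{\bf G}\underline{U_{w^{-1},b}}\dot{w}{\bf 1}_\theta\subseteq\Bbbk{\bf G}\underline{U_{w^{-1}s_k,a}}\dot{s_k}\dot{w}{\bf 1}_\theta$ for some $b>a$, which is the containment used later (e.g.\ in Lemma \ref{ce} and Theorem \ref{paraind}); so the substantive direction is precisely the one your "easy direction" remark and your proposed symmetry argument both gloss over.
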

\begin{proof}
The proof is similar to \cite[Lemma 4.8]{CD3}. Let $M=\Bbbk{\bf G}\underline{U_{w^{-1}s_k,a}}s_k\dot{w}{\bf 1}_\theta$.

\medskip
\noindent If (i) holds, then $s_k\Phi_{w^{-1}}^-\backslash\Phi_{w^{-1}}^-\neq\emptyset$. Choose $\gamma\in s_k\Phi_{w^{-1}}^-\backslash\Phi_{w^{-1}}^-$ such that
$\operatorname{ht}(\gamma)$ is maximal.
Let $\Gamma=\{\alpha\in s_k\Phi_{w^{-1}}^-\cap\Phi_{w^{-1}}^-\mid\operatorname{ht}(\alpha)\ge\operatorname{ht}(\gamma)\}$
and $\Gamma'=\Phi_{w^{-1}}^-\backslash\Gamma$. Notice that
$$w^{-1}s_k(\beta)=w^{-1}(\beta)-\langle\beta,\alpha_k^\vee\rangle w^{-1}(\alpha_k)\in\Phi^+$$
for any $\beta\in\Phi_{w^{-1}}^-\backslash s_k\Phi_{w^{-1}}^-$ which forces $\langle\beta,\alpha_k^\vee\rangle<0$, and hence $s_k(\beta)>\beta$. It follows that $\operatorname{ht}(\gamma)>\operatorname{ht}(\beta)$ for any $\beta\in\Phi_{w^{-1}}^-\backslash s_k\Phi_{w^{-1}}^-$. So $\gamma$, $A=\Gamma$, $B=\Gamma'$ satisfy the assumption in Lemma \ref{key} (ii). It is clear that ${\bf U}_\Gamma=\prod_{\alpha\in\Gamma}{\bf U}_\alpha$ is a normal subgroup of ${\bf U}$ by the definition of $\Gamma$ and (\ref{commrel}). Choose a $b>a$ and let $C$ (resp. $D$) be a complete set of representatives of left cosets of $U_{\Gamma,a}$ (resp. $U_{\gamma,a}$) in $U_{\Gamma,b}$ (resp. $U_{\gamma,b}$). Therefore, applying Lemma \ref{key} (ii) to $\gamma$, $A=\Gamma$, $B=\Gamma'$ yields
\begin{equation}\label{biu1}
\underline{D}\cdot\underline{C}\cdot\underline{U_{w^{-1},a}}\dot{w}{\bf 1}_\theta=\underline{D}\cdot\underline{U_{\Gamma,b}}\prod_{\alpha\in\Gamma'}\underline{U_{\alpha,a}}\dot{w}{\bf 1}_\theta=p^{b-a}\underline{U_{\Gamma,b}}\prod_{\alpha\in\Gamma'}\underline{U_{\alpha,a}}\dot{w}{\bf 1}_\theta=0.
\end{equation}
Here, the product is taken with respect to a fixed order in $\Gamma'$.
It follows from (\ref{sus}) that
\begin{equation}\label{biu4}
\dot{s_k}^{-1}\underline{U_{w^{-1}s_k,a}}\dot{s_k}\dot{w}{\bf 1}_\theta=\underline{U_{w^{-1},a}}\dot{w}{\bf 1}_\theta+\xi.
\end{equation}
and
\begin{equation}\label{biu5}
\dot{s_k}^{-1}\underline{U_{\alpha_k,a}}\cdot\underline{(U_{w^{-1},b})^{s_k}}\dot{s_k}\dot{w}{\bf 1}_\theta=\underline{U_{w^{-1},b}}\dot{w}{\bf 1}_\theta+\xi',
\end{equation}
where
$$\xi=\sum_{t\in\mathbb{F}_{p^{a!}}^*}\theta^w
(h_k(-t^{-1}))\varepsilon_{\alpha_k}(t)\underline{(U_{w^{-1},a})^{s_k}}\dot{s_k}\dot{w}{\bf 1}_\theta$$
and
$$\xi':=\sum_{t\in\mathbb{F}_{p^{a!}}^*}\theta^w
(h_k(-t^{-1}))\varepsilon_{\alpha_k}(t)\underline{(U_{w^{-1},b})^{s_k}}\dot{s_k}\dot{w}{\bf 1}_\theta.$$
The conjugation by $\varepsilon_{\alpha_k}(t)$ takes $C$ to another complete set of left cosets of $U_{\Gamma,a}$ in $U_{\Gamma,b}$ by the normality of $U_{\Gamma,b}$, and $y^{-1}\varepsilon_{\alpha_k}(t)^{-1}y\varepsilon_{\alpha_k}(t)\in U_{\Gamma,b}$ for any $y\in D$ by (\ref{commrel}). Combining these and applying Lemma \ref{key} (i) to $\gamma$, $A=\Gamma$, $B=\Phi_{w^{-1}s_k}^-\backslash\Gamma$ yields
$$
\underline{D}\cdot\underline{C}\cdot\xi=
\sum_{t\in\mathbb{F}_{p^{a!}}^*}\theta^w
(h_k(-t^{-1}))\varepsilon_{\alpha_k}(t)\underline{U_{\Gamma,b}}\cdot\underline{U_{\gamma,b}}
\prod_{\alpha\in s_k\Phi_{w^{-1}}^-\backslash(\Gamma\cup\{\gamma\})}\underline{U_{\alpha,a}}\dot{s_k}\dot{w}{\bf 1}_\theta\ne0.
$$
Here, the product is taken with respect to a fixed height-downward order.

Since $(\mathbb{N}^*\alpha_k+\mathbb{N}^*\alpha)\cap\Phi\subset\Phi_{w^{-1}s_k}^-\backslash\{\alpha_k\}$ for any $\alpha\in\Phi_{w^{-1}s_k}^-\backslash\{\alpha_k\}$, one has $\varepsilon_{\alpha_k}(t)^{-1}(U_{w^{-1},b})^{s_k}\varepsilon_{\alpha_k}(t)=(U_{w^{-1},b})^{s_k}$ for any $t\in\mathbb{F}_{p^{a!}}^*$ by (\ref{commrel}), and hence the space $\varepsilon_{\alpha_k}(t)\cdot\Bbbk(U_{w^{-1},b})^{s_k}\dot{s_k}\dot{w}{\bf 1}_\theta$ is $(U_{w^{-1},b})^{s_k}$-invariant and $$(\varepsilon_{\alpha_k}(t)\cdot\Bbbk(U_{w^{-1},b})^{s_k}\dot{s_k}\dot{w}{\bf 1}_\theta)^{(U_{w^{-1},b})^{s_k}}=\Bbbk\varepsilon_{\alpha_k}(t)\underline{(U_{w^{-1},b})^{s_k}}\dot{s_k}\dot{w}{\bf 1}_\theta$$
for any $t\in\mathbb{F}_{p^{a!}}^*$.  It follows that
$$
\aligned
 &\ (\Bbbk (U_{w^{-1},b})^{s_k}\underline{D}\cdot\underline{C}\cdot\xi)^{(U_{w^{-1},b})^{s_k}}\\
=&\ \left(\sum_{t\in\mathbb{F}_{p^{a!}}}(\varepsilon_{\alpha_k}(t)\cdot\Bbbk(U_{w^{-1},b})^{s_k}\dot{s_k}\dot{w}{\bf 1}_\theta)^{(U_{w^{-1},b})^{s_k}}\right)\cap\Bbbk (U_{w^{-1},b})^{s_k}\underline{D}\cdot\underline{C}\cdot\xi\\
=&\ \left(\sum_{t\in\mathbb{F}_{p^{a!}}}\Bbbk\varepsilon_{\alpha_k}(t)\underline{(U_{w^{-1},b})^{s_k}}\dot{s_k}\dot{w}{\bf 1}_\theta\right)\cap\Bbbk (U_{w^{-1},b})^{s_k}\underline{D}\cdot\underline{C}\cdot\xi\\
=&\ \Bbbk\xi'.
\endaligned
$$
In particular, we have
\begin{equation}\label{biu3}
\xi'\in\Bbbk (U_{w^{-1},b})^{s_k}\underline{D}\cdot\underline{C}\cdot\xi.
\end{equation}

\noindent Combining (\ref{biu1}), (\ref{biu4}), and (\ref{biu3}), we see that
\begin{equation}\label{biu6}
\xi'\in\Bbbk (U_{w^{-1},b})^{s_k}\underline{D}\cdot\underline{C}\cdot \dot{s_k}^{-1}\underline{U_{w^{-1}s_k,a}}\dot{s_k}\dot{w}{\bf 1}_\theta\subset M.
\end{equation}
Clearly, left side of (\ref{biu5}) is in $M$, and hence $\underline{U_{w^{-1},b}}\dot{w}{\bf 1}_\theta\in M$ by (\ref{biu6}).

\medskip
\noindent If (ii) holds. It is clear that
\begin{equation}\label{v1}
\underline{U_{w^{-1}s_k,a}}\dot{s_k}\dot{w}{\bf 1}_\theta=(\underline{U_{w^{-1},a}})^{s_k}\cdot\underline{U_{\alpha_k,a}}\dot{s_k}\dot{w}{\bf 1}_\theta=\underline{U_{w^{-1},a}}\cdot\underline{U_{\alpha_k,a}}\dot{s_k}\dot{w}{\bf 1}_\theta.
\end{equation}
Since $\theta^w|_{{\bf T}_k}\in\mathcal{X}_1$,  there exists $b\in\mathbb{N}$ such that
\begin{equation}\label{v2}
\Bbbk G_{k,b}\dot{w}{\bf 1}_\theta=\Bbbk G_{k,b}\underline{U_{\alpha_k,a}}\dot{s_k}\dot{w}{\bf 1}_\theta
\end{equation}
by Remark \ref{rem1}. By multiplying the sum of representatives of all the left cosets of $U_{w^{-1},a}$ in $U_{w^{-1},b}$ to the right side of (\ref{v1}), one obtains
\begin{equation}\label{v3}
\underline{U_{w^{-1},b}}\cdot\underline{U_{\alpha_k,a}}\dot{s_k}\dot{w}{\bf 1}_\theta\in M.
\end{equation}
Meanwhile $U_{w^{-1},b}$ is invariant under $G_{k,b}$-conjugation, combining this and (\ref{v2}) yields
$$\underline{U_{w^{-1},b}}\dot{w}{\bf 1}_\theta\in M$$
which completes the proof.
\end{proof}

\begin{Lem}\label{ce}
 Assume that $c_w\in\Bbbk$ for each $w\in W$ and $c_e\ne0$. Then $$\mathbb{M}(\operatorname{tr})=\Bbbk{\bf G}\sum_{w\in W}c_w\underline{U_{w^{-1},a}}\dot{w}{\bf 1}_{\operatorname{tr}}$$
 for any $a\in\mathbb{N}$.
\end{Lem}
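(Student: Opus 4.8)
The plan is to reduce the statement to producing ${\bf 1}_{\operatorname{tr}}$ inside $\Bbbk{\bf G}\xi$, where $\xi:=\sum_{w\in W}c_w\underline{U_{w^{-1},a}}\dot w{\bf 1}_{\operatorname{tr}}$ with $c_e\ne0$: since $\mathbb{M}(\operatorname{tr})=\Bbbk{\bf G}{\bf 1}_{\operatorname{tr}}$ and $\Bbbk{\bf G}\xi\subseteq\mathbb{M}(\operatorname{tr})$, the assertion is equivalent to ${\bf 1}_{\operatorname{tr}}\in\Bbbk{\bf G}\xi$. The point of departure is that $\operatorname{tr}$ is trivial on every ${\bf T}_i$, so $I(\operatorname{tr})=I$ and Lemma \ref{forsomew} applies verbatim with $J=I$ and $W_J=W$: it yields $w'\in W$ with $c_{w'}\ne0$ and a level $c>a$ (which may be prescribed, the only constraint in the proof of Lemma \ref{forsomew} being $c>a$) such that $\underline{U_{w'^{-1},c}}\dot{w'}{\bf 1}_{\operatorname{tr}}\in\Bbbk{\bf G}\xi$. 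If $w'=e$ then $\underline{U_{e,c}}=1$ and we are done; by Remark \ref{wnee} this is the only case that occurs unless $\xi$ has a nonzero coefficient off $e$, so the remaining work is the case $w'\ne e$.

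For the case $w'\ne e$ the mechanism I would use is the $\Bbbk{\bf G}$-linear augmentation $\epsilon\colon\mathbb{M}(\operatorname{tr})\to\Bbbk$ sending every Bruhat basis vector $u\dot w{\bf 1}_{\operatorname{tr}}$ to $1$ (equivalently, restricted to each $\Bbbk G_n{\bf 1}_{\operatorname{tr}}=\Bbbk[G_n/B_n]$ it is the sum-of-values map, which is $G_n$-equivariant, whence $\epsilon$ is ${\bf G}$-equivariant). Its kernel $\mathbb{M}(\operatorname{tr})^+:=\ker\epsilon$ has codimension one and, since ${\bf G}=\langle{\bf B},\dot s_i\ (i\in I)\rangle$ and ${\bf B}$ acts trivially on ${\bf 1}_{\operatorname{tr}}$, equals $\sum_{i\in I}\Bbbk{\bf G}(\dot s_i-1){\bf 1}_{\operatorname{tr}}$. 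Because $|U_{w^{-1},a}|=p^{\ell(w)\,a!}$ is $0$ in $\Bbbk$ for $w\ne e$ and $1$ for $w=e$, one computes $\epsilon(\xi)=c_e\ne0$; this is exactly where the hypothesis $c_e\ne0$ enters, and it says $\Bbbk{\bf G}\xi\not\subseteq\mathbb{M}(\operatorname{tr})^+$. As $\mathbb{M}(\operatorname{tr})^+$ has codimension one, it therefore suffices to prove the containment $\mathbb{M}(\operatorname{tr})^+\subseteq\Bbbk{\bf G}\xi$; once this is known, $\Bbbk{\bf G}\xi$ properly contains the codimension-one submodule $\mathbb{M}(\operatorname{tr})^+$ and hence is all of $\mathbb{M}(\operatorname{tr})$.

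To establish $\mathbb{M}(\operatorname{tr})^+\subseteq\Bbbk{\bf G}\xi$ I would feed $\underline{U_{w'^{-1},c}}\dot{w'}{\bf 1}_{\operatorname{tr}}\in\Bbbk{\bf G}\xi$ into Lemma \ref{c2}. For $\theta=\operatorname{tr}$ the second alternative of that lemma is never available (as $\theta^w|_{{\bf T}_k}=\operatorname{tr}\notin\mathcal{X}_1$), so one works solely with alternative (i), i.e.\ with pairs $(v,s_k)$, $s_kv>v$, for which $({\bf U}_{v^{-1}})^{s_k}\ne{\bf U}_{v^{-1}}$; matching the prescribed level $c$ with the level produced by Lemma \ref{c2} lets one replace $\Bbbk{\bf G}\underline{U_{w'^{-1},c}}\dot{w'}{\bf 1}_{\operatorname{tr}}$ by $\Bbbk{\bf G}\underline{U_{v^{-1},c'}}\dot v{\bf 1}_{\operatorname{tr}}$ for $v$ running down a chain in the length order, and one keeps going so as to absorb the generators $(\dot s_i-1){\bf 1}_{\operatorname{tr}}$ into $\Bbbk{\bf G}\xi$. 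I expect the real obstacle to be this combinatorial descent: alternative (i) of Lemma \ref{c2} fails at the simple reflections, and also at elements such as $s_is_j$ with $\alpha_i\perp\alpha_j$ (e.g.\ $s_1s_3$ in type $A_3$), so the chain stalls at a small set of terminal elements; each terminal position then has to be treated directly — restricting to the relevant rank-one (or product of rank-one) subgroup, where $\theta$ is still trivial so that the analysis behind Lemma \ref{sl2case} together with Lemma \ref{fixedpt} applies, to pin down the submodule it generates — after which one checks that these submodules, together, exhaust $\sum_{i\in I}\Bbbk{\bf G}(\dot s_i-1){\bf 1}_{\operatorname{tr}}=\mathbb{M}(\operatorname{tr})^+$ and the proof closes.
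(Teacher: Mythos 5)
Your preliminary reductions are correct and the augmentation map $\epsilon$ is a legitimate $\Bbbk{\bf G}$-module homomorphism (it is the level-by-level sum-of-coefficients map on $\Bbbk[G_n/B_n]$, hence $G_n$-equivariant for each $n$, hence ${\bf G}$-equivariant), and your computation $\epsilon(\xi)=c_e$ is right: $\epsilon(\underline{U_{w^{-1},a}}\dot{w}{\bf 1}_{\operatorname{tr}})=|U_{w^{-1},a}|=p^{a!\,\ell(w)}$, which vanishes in $\Bbbk$ for $w\neq e$. So far this is a clean way to see why the hypothesis $c_e\neq 0$ must enter.

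The gap is in the step that is supposed to establish $\mathbb{M}(\operatorname{tr})^+\subseteq\Bbbk{\bf G}\xi$. Lemma \ref{forsomew} supplies a single element $\underline{U_{w'^{-1},c}}\dot{w'}{\bf 1}_{\operatorname{tr}}\in\Bbbk{\bf G}\xi$, and Lemma \ref{c2}(i) descends along one chain to an element of the form $\underline{U_{w_K,c'}}\dot{w_K}{\bf 1}_{\operatorname{tr}}$ where $K$ is maximal with $w_K\le w'$; but Lemma \ref{c2} produces \emph{equalities} of cyclic submodules, so all you learn is that this single cyclic submodule $\Bbbk{\bf G}\underline{U_{w_K,c'}}\dot{w_K}{\bf 1}_{\operatorname{tr}}$ is contained in $\Bbbk{\bf G}\xi$. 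For $K\subsetneq I$ (e.g.\ already when $w'=s_1$ is a simple reflection and $|I|>1$) that cyclic submodule is a proper submodule of $\mathbb{M}(\operatorname{tr})^+$: $\mathbb{M}(\operatorname{tr})$ has $2^{|I|}$ composition factors (this is exactly the result of \cite{CD2}), and the single generator $\underline{U_{w_K,c'}}\dot{w_K}{\bf 1}_{\operatorname{tr}}$ cannot account for the others. You acknowledge ``terminal positions'' and propose ``these submodules, together,'' but the descent from one starting point yields only one chain and one terminal submodule, not a family exhausting $\sum_{i\in I}\Bbbk{\bf G}(\dot{s}_i-1){\bf 1}_{\operatorname{tr}}$. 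So as written the plan cannot close.

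What the paper does instead, and what your proposal is missing, is an induction on the number $N(\xi)$ of nonzero coefficients $c_w$, together with a device for recovering the stripped-off term \emph{at the original level $a$}. After obtaining $\underline{U_{w_K,c'}}\dot{w_K}{\bf 1}_{\operatorname{tr}}\in\Bbbk{\bf G}\xi$, one uses Steinberg's identity \cite[Lemma 2]{St} to show $\eta_K=\sum_{w\in W_K}(-1)^{\ell(w)}\dot{w}{\bf 1}_{\operatorname{tr}}\in\Bbbk{\bf G}\xi$, and then multiplies $\eta_K$ by $\underline{U_{\tau^{-1},a}}\dot{\tau}\underline{U_{w_K,a}}$ (where $w'=\tau w_K$) to recover $\underline{U_{w'^{-1},a}}\dot{w'}{\bf 1}_{\operatorname{tr}}$ at level $a$ inside $\Bbbk{\bf G}\xi$. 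Then $\xi':=\xi-c_{w'}\underline{U_{w'^{-1},a}}\dot{w'}{\bf 1}_{\operatorname{tr}}\in\Bbbk{\bf G}\xi$ has strictly fewer nonzero coefficients, with $c_e$ still nonzero because $w'\neq e$, and the induction hypothesis gives $\mathbb{M}(\operatorname{tr})=\Bbbk{\bf G}\xi'\subseteq\Bbbk{\bf G}\xi$. Your $\epsilon$-observation is a correct reformulation of why the $c_e$-term survives, but it does not replace the strip-one-term-at-a-time induction; without that mechanism your argument reduces to a containment ($\mathbb{M}(\operatorname{tr})^+\subseteq\Bbbk{\bf G}\xi$) that is essentially equivalent to the lemma itself and that your descent does not prove.
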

\begin{proof}
 For any $J\subset I$, define $\eta_J=\sum_{w\in W_J}(-1)^{\ell(w)}\dot{w}{\bf 1}_{\rm tr}\in\mathbb{M}({\rm tr})$ as in \cite{Xi}.
Write $v=\sum_{w\in W}c_w\underline{U_{w^{-1},a}}\dot{w}{\bf 1}_{\operatorname{tr}}$ and $N(v)=|\{w\in W|c_w\ne0\}|$. We will proceed by induction on $N(v)$. If $N(v)=1$, then  $c_w=0$ for all $w\ne e$ and the result is trivial. If $N(v)>1$, then $c_w\ne0$ for some $w\ne e$. Remark \ref{wnee} implies that
$$\underline{U_{w'^{-1},b}}\dot{w'}{\bf 1}_{\operatorname{tr}}\in\Bbbk{\bf G}\sum_{w\in W}c_w\underline{U_{w^{-1},a}}\dot{w}{\bf 1}_{\operatorname{tr}}$$
for some $w'\ne e$ and $b\in\mathbb{N}$. Let $K$ be the maximal subset of $I$ such that $w_K<w'$. Write $w'=s_{i_1}\cdots s_{i_t}w_K$ with $s_{i_m}s_{i_{m+1}}\cdots s_{i_t}w_K>s_{i_{m+1}}\cdots s_{i_t}w_K$ for all $1\le m<t$ and $s_{i_t}w_K>w_K$. The maximality of $K$ implies that $({\bf U}_{w_Ks_{i_t}\cdots s_{i_{m+1}}})^{s_{i_m}}\ne{\bf U}_{w_Ks_{i_t}\cdots s_{i_{m+1}}}$ for all $1\le m<t$. It follows that $\underline{U_{w_K,c}}\dot{w_K}{\bf 1}_{\operatorname{tr}}\in\Bbbk{\bf G}v$ for some $c\in\mathbb{N}$ by applying Lemma \ref{c2} (i) repeatedly, and hence
$$
\aligned
\eta_K&\ =\sum_{w\in W_K}p^{c!\ell(w)}\eta_K=\sum_{w\in W_K}(-1)^{\ell(w)}\dot{w}\underline{U_{w_K,c}}\eta_K\\
&\ =\sum_{w\in W_K}(-1)^{\ell(w)+\ell(w_K)}w\underline{U_{w_K,c}}\dot{w_K}{\bf 1}_{\operatorname{tr}}
\endaligned
$$
by \cite[Lemma 2]{St}, which implies that $\eta_K\in\Bbbk{\bf G}v$. Therefore, we obtain
\begin{equation}\label{hohoho}
\underline{U_{w'^{-1},a}}\dot{w'}{\bf 1}_{\operatorname{tr}}=\underline{U_{\tau^{-1}}}\dot{\tau}\underline{U_{w_K,a}}\dot{w_K}{\bf 1}_{\operatorname{tr}}=(-1)^{\ell(w_K)}\underline{U_{\tau^{-1}}}\dot{\tau}\underline{U_{w_K,a}}\eta_K\in\Bbbk{\bf G}v,
\end{equation}
where $\tau=s_{i_1}\cdots s_{i_t}$. Write $v'=v-c_{w'}\underline{U_{w'^{-1},a}}\dot{w'}{\bf 1}_{\operatorname{tr}}$, we see that $N(v')<N(v)$. It follows from (\ref{hohoho}) that $\mathbb{M}(\operatorname{tr})=\Bbbk{\bf G}v'\subset\Bbbk{\bf G}v$, where the first equality follows from induction. This forces $\mathbb{M}(\operatorname{tr})=\Bbbk{\bf G}v$ which completes the proof.
\end{proof}

Following \cite{CL} and \cite{YY}, for each $\theta\in\widehat{\bf T}$, $a\in\mathbb{N}$, and $w\in W_{I(\theta)}$, define $\mathcal{T}_{w,a}\in\mathbb{E}_{\theta,a}:=\operatorname{End}_{\Bbbk G_a}(\Bbbk G_a{\bf 1}_\theta)$ by $\mathcal{T}_{w,a}({\bf 1}_\theta)=\underline{U_{w,a}}w^{-1}{\bf 1}_\theta$. For each $J\subset I(\theta)$, set $$e_{J,a}=\sum_{w\in W_J}\mathcal{T}_{w,a},\quad o_{J,a}=(-1)^{\ell(w_J)}\mathcal{T}_{w_J,a}.$$

\begin{Prop}[{\cite[Proposition 4.5]{YY}}]\label{prop45}
Let $\theta\in\widehat{\bf T}$, $J\subset I(\theta)$, and $a\in\mathbb{N}$. Let $\{\pi_J|J\subset I(\theta)\}$ be a set of orthogonal primitive idempotents in $\mathbb{E}_{\theta,a}$ satisfying $1=\sum_{J\subset I(\theta)}\pi_J$ and $\pi_J\in e_{J,a}o_{\widehat{J},a}\mathbb{E}_{\theta,a}$, where $\widehat{J}=I(\theta)\backslash J$. Set $Y_a(J,\theta)=\operatorname{Im}\pi_J$. Then $\pi_J\mathbb{E}_{\theta,a}=e_{J,a}o_{\widehat{J},a}\mathbb{E}_{\theta,a}$, and we have an Krull--Schmidt decomposition $\Bbbk G_a{\bf 1}_\theta=\bigoplus_{J\subset I(\theta)}Y_a(J,\theta)$ with each $Y_a(J,\theta)$ having simple head and socle.
\end{Prop}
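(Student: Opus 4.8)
The plan is to transfer everything to the endomorphism algebra $\mathbb{E}:=\mathbb{E}_{\theta,a}$. By the Hecke-algebra computations of \cite{CL} and \cite{YY}, the $\mathcal{T}_{w,a}$ $(w\in W_{I(\theta)})$ form a $\Bbbk$-basis of $\mathbb{E}$ with $\mathcal{T}_{u,a}\mathcal{T}_{v,a}=\mathcal{T}_{uv,a}$ when $\ell(uv)=\ell(u)+\ell(v)$ and $\mathcal{T}_{s_i,a}^{2}=-\mathcal{T}_{s_i,a}$ (the Iwahori--Hecke relation $\mathcal{T}_s^{2}=q+(q-1)\mathcal{T}_s$ with $q=|U_{\alpha_i,a}|=p^{a!}\equiv 0$ in $\Bbbk$), so $\mathbb{E}\cong H_0(W_{I(\theta)})$, the $0$-Hecke algebra, whose representation theory is well understood (cf.\ \cite{CL},\cite{YY}): $\mathbb{E}$ is basic, its simple modules are the one-dimensional $\chi_K$ $(K\subseteq I(\theta))$ on which $\mathcal{T}_{s_i,a}$ acts by $0$ if $i\in K$ and by $-1$ otherwise, they are pairwise non-isomorphic, the $e_{J,a}o_{\widehat J,a}$ are primitive idempotents, and the projective indecomposables $e_{J,a}o_{\widehat J,a}\mathbb{E}$ have simple head $\chi_J$ \emph{and} simple socle.

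First I would record the combinatorial identities. Expanding $\mathcal{T}_{w,a}$ along a reduced word gives $\chi_K(\mathcal{T}_{w,a})=(-1)^{\ell(w)}$ if $w\in W_{I(\theta)\backslash K}$ and $0$ otherwise; hence $\chi_K(e_{J,a})=\sum_{w\in W_{J\backslash K}}(-1)^{\ell(w)}$ equals $1$ exactly when $J\subseteq K$, $\chi_K(o_{\widehat J,a})=(-1)^{\ell(w_{\widehat J})}\chi_K(\mathcal{T}_{w_{\widehat J},a})$ equals $1$ exactly when $K\subseteq J$, and therefore $\chi_K(e_{J,a}o_{\widehat J,a})=\delta_{JK}$; in particular the image of $e_{J,a}o_{\widehat J,a}$ in $\mathbb{E}/\operatorname{rad}\mathbb{E}=\prod_K\Bbbk\epsilon_K$ is $\epsilon_J$. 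Then, for the given $\pi_J$: since $\pi_J\in e_{J,a}o_{\widehat J,a}\mathbb{E}$ we have $\pi_J\mathbb{E}\subseteq e_{J,a}o_{\widehat J,a}\mathbb{E}$, and reducing modulo $\operatorname{rad}\mathbb{E}$ forces $\overline{\pi_J}=\epsilon_J$ as well; so $\pi_J\mathbb{E}$ and $e_{J,a}o_{\widehat J,a}\mathbb{E}$ are projective indecomposable right $\mathbb{E}$-modules with the same simple head $\chi_J$, hence isomorphic, hence of equal $\Bbbk$-dimension, hence --- together with the inclusion --- equal. This is the first assertion.

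For the module-theoretic part put $M=\Bbbk G_a{\bf 1}_\theta$. From $1=\sum_J\pi_J$, with the $\pi_J$ orthogonal idempotents in $\operatorname{End}_{\Bbbk G_a}(M)$, we get $M=\bigoplus_J Y_a(J,\theta)$ with $Y_a(J,\theta)=\operatorname{Im}\pi_J$; each summand is indecomposable because $\operatorname{End}_{\Bbbk G_a}(Y_a(J,\theta))=\pi_J\mathbb{E}\pi_J$ is local, and $Y_a(J,\theta)\cong Y_a(K,\theta)$ would make $\pi_J$ conjugate to $\pi_K$, hence $\epsilon_J=\epsilon_K$, hence $J=K$; so this is the Krull--Schmidt decomposition. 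For the simple head and socle I would transport the corresponding property of the $0$-Hecke projective $\pi_J\mathbb{E}=e_{J,a}o_{\widehat J,a}\mathbb{E}$ across the $(\Bbbk G_a,\mathbb{E})$-bimodule $M$: $Y_a(J,\theta)$ is the value of $M\otimes_{\mathbb{E}}(-)$ on the projective indecomposable attached to $\chi_J$, and one checks --- this is where the explicit submodule structure of $\Bbbk G_a{\bf 1}_\theta$, in the vein of Lemma \ref{Soc} and \cite{CL},\cite{YY}, enters --- that $M\otimes_{\mathbb{E}}(-)$ sends simple $\mathbb{E}$-modules to simple or zero $\Bbbk G_a$-modules; with right exactness this forces $\operatorname{Hd}Y_a(J,\theta)$ to be simple. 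Simple socle follows dually, $M^{\vee}=\operatorname{Hom}_\Bbbk(M,\Bbbk)$ being an induced module of the same kind with endomorphism algebra again $\cong H_0(W_{I(\theta)})$, to which the same analysis applies.

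The step I expect to be the main obstacle is the input drawn from \cite{CL} and \cite{YY}: the identification $\mathbb{E}_{\theta,a}\cong H_0(W_{I(\theta)})$ (where the Bruhat decomposition of $G_a$ and the vanishing $p^{a!}\equiv 0$ in $\Bbbk$ are used), and --- more seriously, since it is not formal --- the simple head and socle of $Y_a(J,\theta)$, which really needs control of the submodule lattice of $\Bbbk G_a{\bf 1}_\theta$ rather than just the $\mathbb{E}$-module picture. The character computation, the idempotent comparison and the Krull--Schmidt bookkeeping are routine once those inputs are available.
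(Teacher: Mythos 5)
The paper does not prove this proposition: it is imported verbatim as a citation of Yoshida's result (\cite[Proposition~4.5]{YY}), so there is no ``paper's proof'' for your argument to be measured against. What you have written is an attempted re-derivation.

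The formal part of your argument is sound. The identification $\mathbb{E}_{\theta,a}\cong H_0(W_{I(\theta)})$ (the quadratic relation $\mathcal{T}_{s}^2=q\cdot 1+(q-1)\mathcal{T}_s$ degenerating to $\mathcal{T}_s^2=-\mathcal{T}_s$ because $q=p^{a!}\equiv0$ in $\Bbbk$) is the standard Iwahori--Hecke computation from \cite{CL}. Your evaluation $\chi_K(\mathcal{T}_w)=(-1)^{\ell(w)}$ for $w\in W_{I(\theta)\setminus K}$ and $0$ otherwise is correct, and the identities $\chi_K(e_{J,a})=\delta_{J\subseteq K}$, $\chi_K(o_{\widehat J,a})=\delta_{K\subseteq J}$, hence $\chi_K(e_{J,a}o_{\widehat J,a})=\delta_{JK}$, follow, using that the sign character sums to zero over any nontrivial parabolic $W_{J\setminus K}$. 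From $\pi_J\in e_{J,a}o_{\widehat J,a}\mathbb{E}_{\theta,a}$ and $\pi_J\neq0$ you correctly deduce $\overline{\pi_J}=\epsilon_J$ in the semisimple quotient; the inclusion $\pi_J\mathbb{E}_{\theta,a}\subseteq e_{J,a}o_{\widehat J,a}\mathbb{E}_{\theta,a}$ together with both being projective indecomposables with top $\chi_J$ forces equality. The Krull--Schmidt statement and the non-isomorphy of $Y_a(J,\theta)$ for different $J$ (via the Morita-type correspondence of idempotents) is routine bookkeeping, as you say.

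The genuine gap is exactly the one you flag: the assertion that each $Y_a(J,\theta)$ has simple head and socle. Your proposed strategy---show that $M\otimes_{\mathbb{E}_{\theta,a}}(-)$ carries simple (left) $\mathbb{E}_{\theta,a}$-modules to simple or zero $\Bbbk G_a$-modules and invoke right-exactness---is a reasonable outline, but it is not an argument: (a) the claim that this functor preserves simplicity is not automatic for an arbitrary $(\Bbbk G_a,\mathbb{E})$-bimodule with $\mathbb{E}=\operatorname{End}_{\Bbbk G_a}(M)$, and (b) even granting it, right-exactness only exhibits $M\otimes_{\mathbb{E}}S_J$ as \emph{a} semisimple quotient of $Y_a(J,\theta)$, not as \emph{the} head; one still needs to know that every simple quotient of $Y_a(J,\theta)$ arises this way, i.e. that $\operatorname{Hd}Y_a(J,\theta)$ has no constituents with zero $U_a$-fixed space of the wrong ``type.'' That control of the submodule lattice of $\Bbbk G_a\mathbf{1}_\theta$ is precisely the content of Sawada's and Pillen's work and of \cite{YY}; it is not recoverable from the $0$-Hecke picture alone, because the $0$-Hecke algebra only sees the $U_a$-fixed vectors. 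In short, the Hecke-algebraic reduction and the idempotent calculus are correct, but the ``simple head and socle'' clause---the non-formal heart of the proposition---is asserted rather than proved.
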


\begin{Rem}\label{317}
Applying Proposition \ref{prop45} to $J=I(\theta)$, we see that $\pi_{I(\theta)}\mathbb{E}_{\theta,a}=e_{I(\theta),a}\mathbb{E}_{\theta,a}$. In particular, we have
$$Y_a(I(\theta),\theta)=\operatorname{Im}(\pi_{I(\theta)})=\operatorname{Im}(e_{I(\theta),a})=\Bbbk G_a\sum_{w\in W_{I(\theta)}}\underline{U_{w^{-1},a}}\dot{w}{\bf 1}_\theta.$$
\end{Rem}

\begin{Thm}\label{hd=sim}
For any $\theta\in\widehat{\bf T}$, the $\Bbbk{\bf G}$-module $\mathbb{M}(\theta)$ has a unique simple quotient $($equivalently, maximal submodule$)$.
\end{Thm}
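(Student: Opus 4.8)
The plan is to establish the equivalent assertion that $\mathbb M(\theta)$ has a unique maximal submodule. Since $\mathbb M(\theta)=\Bbbk{\bf G}{\bf 1}_\theta$ is cyclic, the set $\mathcal N:=\{x\in\mathbb M(\theta):\Bbbk{\bf G}x\neq\mathbb M(\theta)\}$ is automatically stable under scalars and under the ${\bf G}$-action (because $\Bbbk{\bf G}g=\Bbbk{\bf G}$), is proper (it omits ${\bf 1}_\theta$), and contains every proper submodule; so the theorem is exactly the statement that $\mathcal N$ is closed under addition. As $\Bbbk{\bf G}(x+y)=\mathbb M(\theta)$ forces ${\bf 1}_\theta=x'+y'$ with $x'\in\Bbbk{\bf G}x$, $y'\in\Bbbk{\bf G}y$, it is enough to prove: \emph{whenever ${\bf 1}_\theta=x+y$ in $\mathbb M(\theta)$, at least one of $\Bbbk{\bf G}x$, $\Bbbk{\bf G}y$ equals $\mathbb M(\theta)$.} Before starting I would reduce to ${\bf G}$ semisimple and simply connected: the connected centre acts on $\mathbb M(\theta)$ by a scalar, so the submodule lattice of $\mathbb M(\theta)$ coincides with that of the corresponding module for $\mathcal D{\bf G}$, and pulling back along a simply connected central isogeny changes neither. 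In this setting ${\bf T}'={\bf T}$, so $I(\theta)=I$ forces $\theta=\operatorname{tr}$, and Theorem \ref{restrictionthm}, Lemma \ref{Soc}, Proposition \ref{prop45} and Remark \ref{317} are all available.

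The easy case is $I(\theta)=\emptyset$. Choose $b$ large enough that $x,y\in\Bbbk G_b{\bf 1}_\theta$ and $\theta|_{T_{i,b}}\neq\operatorname{tr}$ for every $i\in I$. By Lemma \ref{Soc}(i),(ii), $\Bbbk G_b{\bf 1}_\theta$ has simple head $L(\theta|_{T_b})$ with the natural surjection sending ${\bf 1}_\theta$ to the highest weight vector. In $L(\theta|_{T_b})$ the images of $x$ and $y$ sum to the nonzero image of ${\bf 1}_\theta$, so one of them, say that of $x$, is nonzero; then $\Bbbk G_b x$ surjects onto the simple head $L(\theta|_{T_b})$, hence is not contained in $\operatorname{rad}\Bbbk G_b{\bf 1}_\theta$, hence equals $\Bbbk G_b{\bf 1}_\theta\ni{\bf 1}_\theta$, so $\Bbbk{\bf G}x=\mathbb M(\theta)$.

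The substantial case is $I(\theta)\neq\emptyset$, where $\Bbbk G_b{\bf 1}_\theta=\bigoplus_{J\subseteq I(\theta)}Y_b(J,\theta)$ has $2^{|I(\theta)|}$ simple head summands (Proposition \ref{prop45}), and one must show that exactly one of them, namely the head of $Y_b(I(\theta),\theta)$, persists to $\operatorname{Hd}\mathbb M(\theta)$. Expanding $x,y$ in the Bruhat basis $\Bbbk G_b{\bf 1}_\theta=\bigoplus_{w\in W}\Bbbk U_{w^{-1},b}\dot w{\bf 1}_\theta$ and matching $\dot e$-coefficients in ${\bf 1}_\theta=x+y$ shows that, after swapping, the $\dot e$-coefficient of $x$ is a nonzero scalar. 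I would then produce inside $\Bbbk{\bf G}x$ an element $\xi=\sum_{w\in W_{I(\theta)}}c_w\underline{U_{w^{-1},c}}\dot w{\bf 1}_\theta$ with $c_e\neq 0$: for large $c$ take a nonzero $U_c$-fixed vector of $\Bbbk G_c x$ (Lemma \ref{fixedpt}); it lies in $(\Bbbk G_c{\bf 1}_\theta)^{U_c}$, and comparing with the block decomposition and the explicit generator $Y_c(I(\theta),\theta)=\Bbbk G_c\sum_{w\in W_{I(\theta)}}\underline{U_{w^{-1},c}}\dot w{\bf 1}_\theta$ of Remark \ref{317}, one checks that non-vanishing of the $\dot e$-coefficient cannot be destroyed on passing to higher levels and yields such a $\xi$. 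Now Lemma \ref{forsomew} gives $\Bbbk{\bf G}\xi=\Bbbk{\bf G}\underline{U_{w'^{-1},d}}\dot{w'}{\bf 1}_\theta$ for some $w'\in W_{I(\theta)}$, which — iterating, using Remark \ref{wnee} exactly as in the proof of Lemma \ref{ce} — may be taken to be $w_{I(\theta)}$. From there one alternately climbs one simple reflection at a time via Lemma \ref{c2} (its case (i) applying unconditionally whenever $({\bf U}_{w^{-1}})^{s_k}\neq{\bf U}_{w^{-1}}$, and its case (ii), via Lemma \ref{sl2case}, exactly along the directions with $\theta|_{{\bf T}_i}\in\mathcal X_1$), and, along the remaining directions — those with $\theta|_{{\bf T}_i}\in\mathcal X_0\setminus\{\operatorname{tr}\}$ — reduces, via the extendability of $\theta$ to ${\bf P}_{I(\theta)}$ and a Steinberg-type manipulation of $\dot w_{I(\theta)}{\bf 1}_\theta$ as in Lemma \ref{ce}, to a parabolic-induction problem of the same shape but with strictly fewer trivial directions, so that an induction (with $\theta=\operatorname{tr}$ as the terminal case, settled by Lemma \ref{ce}) closes the argument and forces $\Bbbk{\bf G}x=\mathbb M(\theta)$.

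The main obstacle is precisely this last case, and within it the clash between the Hecke-type idempotents $\pi_J$ of Proposition \ref{prop45} and $\Bbbk{\bf G}$-submodules: each $\pi_J$ is only a $\Bbbk G_c$-endomorphism of $\Bbbk G_c{\bf 1}_\theta$ and need not preserve the intersection of a $\Bbbk{\bf G}$-submodule with $\Bbbk G_c{\bf 1}_\theta$, so one cannot literally project $x$ onto the $I(\theta)$-block but must work throughout inside $\Bbbk{\bf G}x$, propagating the non-vanishing $\dot e$-coefficient to all higher levels and running the reduction entirely with the group-algebra elements $\underline{U_{w^{-1},c}}\dot w$. Keeping the three families of directions — those in $I(\theta)$, those with $\theta|_{{\bf T}_i}\in\mathcal X_0\setminus\{\operatorname{tr}\}$, and those with $\theta|_{{\bf T}_i}\in\mathcal X_1$ — correctly separated, and checking that none of the $2^{|I(\theta)|}-1$ "wrong" simple heads of $\Bbbk G_c{\bf 1}_\theta$ survives to $\operatorname{Hd}\mathbb M(\theta)$, is the delicate bookkeeping at the heart of the proof, and is where Lemmas \ref{forsomew}, \ref{c2}, \ref{ce} and the rank-one Lemma \ref{sl2case} do the real work.
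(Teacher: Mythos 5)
Your reduction and the $I(\theta)=\emptyset$ case are fine, but the substantial case $I(\theta)\neq\emptyset$ has a genuine gap that your proposal does not close, and the route you sketch is quite different from the paper's. The step ``produce inside $\Bbbk{\bf G}x$ an element $\xi=\sum_{w\in W_{I(\theta)}}c_w\underline{U_{w^{-1},c}}\dot w{\bf 1}_\theta$ with $c_e\neq 0$'' is merely asserted: knowing that the $\dot e$-coefficient of $x$ in the Bruhat decomposition $\bigoplus_{w}\Bbbk U_{w^{-1},b}\dot w{\bf 1}_\theta$ is nonzero gives you no control whatsoever over the $\dot e$-coefficient of a $U_c$-fixed vector of $\Bbbk G_cx$, and ``one checks that non-vanishing of the $\dot e$-coefficient cannot be destroyed on passing to higher levels'' is precisely the content that needs a proof. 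Note also that if you \emph{could} produce such a $\xi$, then Lemma \ref{ce} applied to ${\bf G}_{I(\theta)}$ (on which $\theta$ is trivial) would already give ${\bf 1}_\theta\in\Bbbk{\bf G}_{I(\theta)}\xi\subset\Bbbk{\bf G}x$ and the proof would be over; the entire subsequent ``pass to $w_{I(\theta)}$ via \ref{forsomew}/\ref{wnee} and climb via \ref{c2} with an induction on trivial directions'' is therefore redundant, and its presence signals that the role of Lemma \ref{ce} in the argument has not been correctly identified. (It also cannot work as written: Remark \ref{wnee} deliberately gives $w'\neq e$, and one cannot descend to $e$ in a direction $i\in I(\theta)$ by Lemma \ref{c2} — for $SL_2$ and $\theta=\operatorname{tr}$ the module $\Bbbk{\bf G}\underline{U_{s,a}}\dot s{\bf 1}_{\rm tr}$ is the Steinberg submodule, which is proper.)

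The paper's proof is organized around a cleaner idea that is absent from your proposal. Set $h_a=e_{I(\theta),a}({\bf 1}_\theta)=\sum_{w\in W_{I(\theta)}}\underline{U_{w^{-1},a}}\dot w{\bf 1}_\theta$. Lemma \ref{ce} applied to ${\bf G}_{I(\theta)}$ gives $\Bbbk{\bf G}_{I(\theta)}{\bf 1}_\theta=\Bbbk{\bf G}_{I(\theta)}h_a$, so $h_a$ belongs to \emph{no} proper submodule $N$ of $\mathbb M(\theta)$ (otherwise ${\bf 1}_\theta\in N$). Then, given finitely many $x_i$ in proper submodules, all lying in some $\Bbbk G_b{\bf 1}_\theta$, one has $h_b\notin\Bbbk G_bx_i$; by Proposition \ref{prop45} and Remark \ref{317}, the kernel $K_b$ of $\Bbbk G_b{\bf 1}_\theta\to Y_b(I(\theta),\theta)\to\operatorname{Hd}Y_b(I(\theta),\theta)$ is the unique maximal $\Bbbk G_b$-submodule not containing $h_b$, so $\Bbbk G_bx_i\subset K_b$ for all $i$, hence $\sum x_i\in K_b\not\ni{\bf 1}_\theta$. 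This is exactly the ``block decomposition'' information you observed is per-level only — but the paper shows precisely how to use it, by pairing it with the globally-defined obstruction $h_a$ rather than trying to propagate a $\dot e$-coefficient. No reduction to the semisimple simply connected case, no split into $I(\theta)=\emptyset$ vs.\ not, and no use of Lemma \ref{c2} or the $\mathcal X_0/\mathcal X_1$ trichotomy (which belong to Theorem \ref{paraind}, not here) is needed.
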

\begin{proof}
We will show that the sum of all proper submodules of $\mathbb{M}(\theta)$ is proper again, which implies $\mathbb{M}(\theta)$ has a unique maximal submodule.

For any $a\in\mathbb{N}$, let $h_a=e_{I(\theta),a}({\bf 1}_\theta)=\sum_{w\in W_{I(\theta)}}\underline{U_{w^{-1},a}}\dot{w}{\bf 1}_\theta$. Since ${\bf B}\cap{\bf G}_{I(\theta)}$ acts on ${\bf 1}_\theta$ trivially, applying Lemma \ref{ce} to ${\bf G}={\bf G}_{I(\theta)}$ yields $\Bbbk{\bf G}_{I(\theta)}{\bf 1}_\theta=\Bbbk{\bf G}_{I(\theta)}h_a$. In other words,
\begin{equation}\label{notinN}
h_a\not\in N~\mbox{for~any~proper}~\Bbbk{\bf G}\mbox{-submodule}~N~\mbox{of}~\mathbb{M}(\theta).
\end{equation}
Let $N_i$ $(i\in E)$ be all proper submodules of $\mathbb{M}(\theta)$. It remains to show that any finite sum $\sum_{i\in E}x_i$ with $x_i\in N_i$ is not equal to ${\bf 1}_\theta$. Since only finitely many $x_i$'s are nonzero, all nonzero $x_i$ are in $\Bbbk G_b{\bf 1}_\theta$ for some $b\in\mathbb{N}^*$. It follows from (\ref{notinN}) that $h_b\not\in\Bbbk{\bf G}x_i$ $(i\in E)$, and hence $h_b\not\in\Bbbk G_bx_i$ $(i\in E)$. Let $K_b$ be the kernel of the composite map $\Bbbk G_b{\bf 1}_\theta\rightarrow Y_b(I(\theta),\theta)\rightarrow\operatorname{Hd}Y_b(I(\theta),\theta)$, where the first map is projecting to direct summand $Y_b(I(\theta),\theta)$. By Proposition \ref{prop45} and Remark \ref{317}, $K_b$ is the unique maximal submodule of $\Bbbk G_b{\bf 1}_\theta$ not containing $h_b$, and hence $\Bbbk G_bx_i\subset K_b$. Therefore, $\sum_{i\in E}\Bbbk G_bx_i\subset K_b$ and hence ${\bf 1}_\theta\neq\sum_{i\in E}x_i$ which completes the proof.
\end{proof}

\noindent For each $\theta\in\widehat{\bf T}$, we denote $\mathbb{L}(\theta)$ be the unique simple quotient determined by Theorem \ref{hd=sim}.

\begin{Cor}\label{unique}
Let $\theta\in\widehat{\bf T}$. Then $\mathbb{L}(\theta)$ is the unique $($up to isomorphism$)$ irreducible $\Bbbk{\bf G}$-module containing a ${\bf B}$-stable line associated to $\theta$. Moreover, $\mathbb{L}(\theta)$ contains a unique ${\bf B}$-stable line, and hence $\mathbb{L}(\theta)\simeq\mathbb{L}(\theta')$ if and only if $\theta\simeq\theta'$.
\end{Cor}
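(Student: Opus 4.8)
The plan is to deduce everything from Theorem \ref{hd=sim} together with a "Frobenius reciprocity" identification of $\mathbf{B}$-stable lines with module homomorphisms from $\mathbb{M}(\theta)$. The starting observation is that for any $\Bbbk\mathbf{G}$-module $V$, giving a nonzero vector $v \in V$ spanning a $\mathbf{B}$-stable line on which $\mathbf{B}$ acts by $\theta$ is the same as giving a nonzero $\Bbbk\mathbf{G}$-homomorphism $\varphi\colon \mathbb{M}(\theta) = \Bbbk\mathbf{G}\otimes_{\Bbbk\mathbf{B}}\theta \to V$ with $\varphi(\mathbf{1}_\theta) = v$; this is immediate from the universal property of the tensor product since $\mathbf{U} = [\mathbf{B},\mathbf{B}]$ acts trivially on $\mathbf{1}_\theta$. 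So the first step is to spell out this bijection.

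Next I would prove existence and the "only such irreducible" statement. If $V$ is irreducible and contains a $\mathbf{B}$-stable line of character $\theta$, the associated $\varphi\colon\mathbb{M}(\theta)\to V$ is nonzero, hence surjective (as $V$ is simple), so $V$ is a simple quotient of $\mathbb{M}(\theta)$; by Theorem \ref{hd=sim} that quotient is unique up to isomorphism, so $V\simeq\mathbb{L}(\theta)$. Conversely $\mathbb{L}(\theta)$ itself is a nonzero quotient of $\mathbb{M}(\theta)$, hence contains the image of $\mathbf{1}_\theta$, which is a $\mathbf{B}$-stable line of character $\theta$ (it is nonzero because $\mathbf{1}_\theta$ generates $\mathbb{M}(\theta)$, so it cannot lie in the kernel, i.e. in the maximal submodule). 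This gives the first sentence of the corollary.

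For uniqueness of the $\mathbf{B}$-stable line inside $\mathbb{L}(\theta)$: suppose $\mathbb{L}(\theta)$ contains a $\mathbf{B}$-stable line of some character $\theta'$, spanned by $v'$. Then the associated map $\mathbb{M}(\theta')\to\mathbb{L}(\theta)$ is surjective, so $\mathbb{L}(\theta)\simeq\mathbb{L}(\theta')$, and in particular $\theta'$ and $\theta$ are both characters realized on $\mathbf{B}$-stable lines of the same simple module. The key point is that $\theta'$ must then equal $\theta$: restricting to the torus $\mathbf{T}$, the $\mathbf{B}$-stable line of character $\theta$ and that of character $\theta'$ are $\mathbf{T}$-weight spaces, and I would argue that $\mathbb{L}(\theta)$ has a one-dimensional weight space for the weight $\theta|_{\mathbf{T}}$ which is "extremal" in the relevant sense — concretely, because $\mathbf{U}$ acts trivially on $\mathbf{1}_\theta$, the line $\Bbbk\mathbf{1}_\theta$ is the full $\mathbf{U}$-fixed space inside the cyclic piece it generates at each finite level (using Lemma \ref{Soc}(iii) or the argument behind it), forcing $\theta' = \theta$ as characters of $\mathbf{B}$, and then dimension one of that space forces $v'$ to be proportional to the image of $\mathbf{1}_\theta$. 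Hence the line is unique, and $\mathbb{L}(\theta)\simeq\mathbb{L}(\theta')$ implies $\theta\simeq\theta'$ (the reverse implication being trivial since $\theta\simeq\theta'$ makes the defining data of $\mathbb{M}(\theta)$ and $\mathbb{M}(\theta')$ literally the same).

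I expect the main obstacle to be the uniqueness of the $\mathbf{B}$-stable line — specifically, ruling out that $\mathbb{L}(\theta)$ could contain a second $\mathbf{B}$-stable line of a genuinely different character $\theta'$. The cleanest route is to work at the finite levels $\Bbbk G_n$: any $\mathbf{B}$-stable vector lies in some $\Bbbk G_b\mathbf{1}_\theta$'s image in $\mathbb{L}(\theta)$, and there one can use that $\Bbbk G_b\mathbf{1}_\theta$ has simple head and that the $U_b$-fixed vectors of that head form a line (Lemma \ref{Soc} and Proposition \ref{prop45}, applied to the summand $Y_b(I(\theta),\theta)$ whose head is the relevant composition factor), pinning the weight and the line down. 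Care is needed because $\mathbb{L}(\theta)$ is a direct limit, so one must check compatibility across levels, but no new idea beyond what is already assembled in Theorem \ref{hd=sim} should be required.
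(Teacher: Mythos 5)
Your first paragraph and the "existence/only such irreducible" step are exactly the paper's argument: Frobenius reciprocity identifies $\Hom_{\Bbbk{\bf B}}(\theta,V)$ with $\Hom_{\Bbbk{\bf G}}(\mathbb{M}(\theta),V)$, and Theorem~\ref{hd=sim} does the rest. No issues there.

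Where you diverge from the paper is the uniqueness of the ${\bf B}$-stable line. The paper's entire argument for this is the one-line dimension count
$\dim\Hom_{\Bbbk{\bf B}}(\theta,\mathbb{L}(\theta))=\dim\Hom_{\Bbbk{\bf G}}(\mathbb{M}(\theta),\mathbb{L}(\theta))=1$,
which shows that the ${\bf B}$-stable line \emph{on which ${\bf B}$ acts by $\theta$} is unique. You instead try to prove the literally stronger claim that $\mathbb{L}(\theta)^{\bf U}$ is one-dimensional, i.e.\ that no ${\bf B}$-stable line with a different character $\theta'$ can occur, via a "weight-extremality" argument. You are right that this stronger statement is what the "hence $\mathbb{L}(\theta)\simeq\mathbb{L}(\theta')$ iff $\theta\simeq\theta'$" clause really uses, so the instinct is sound. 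But the concrete justification you offer is not correct as stated: you write that "$\Bbbk{\bf 1}_\theta$ is the full ${\bf U}$-fixed space inside the cyclic piece it generates at each finite level," appealing to Lemma~\ref{Soc}(iii). That is false for the cyclic module $\Bbbk G_b{\bf 1}_\theta$ itself, whose $U_b$-fixed space is $\bigoplus_{w\in W}\Bbbk\underline{U_{w^{-1},b}}\dot{w}{\bf 1}_\theta$, of dimension $|W|$; and Lemma~\ref{Soc} is about the socle (and has the standing hypothesis $\theta|_{T_{i,n}}\ne\op{tr}$), not about $U$-fixed vectors of the head. Passing to the image in $\mathbb{L}(\theta)$ does cut this down, but a quotient map is not exact on $U$-invariants, so you cannot simply read off the fixed space of the image from the fixed space of $\Bbbk G_b{\bf 1}_\theta$. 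To make this route work you would need to compute $\mathbb{L}(\theta)^{\bf U}$ directly, e.g.\ via the explicit description $\mathbb{L}(\theta)\simeq\op{Ind}_{\Bbbk{\bf P}_J}^{\Bbbk{\bf G}}\mathbb{L}_J(\theta)$ and the fixed-point formula $(\op{Ind}_{\Bbbk P_{J,a}}^{\Bbbk G_a}\mathbb{L}_J(\theta))^{U_a}=\bigoplus_{w\in W^J}\Bbbk\underline{U_{w^{-1},a}}\dot{w}{\bf 1}_{\theta,J}$ from the proof of Theorem~\ref{paraind}, then check that only the $w=e$ summand survives taking ${\bf U}$-invariants across all levels $a$ — but that machinery is only available \emph{after} Corollary~\ref{IndPG}, which comes later and itself quotes Corollary~\ref{unique}, so you would have to be careful about circularity. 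In short: your first half matches the paper; your second half is a different, more ambitious route, but the key step as you have written it is wrong, and a correct version of that route is substantially harder than the paper's dimension count.
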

\begin{proof}
Clearly, any such irreducible module is a quotient of $\mathbb{M}(\theta)$ and hence isomorphic to $\mathbb{L}(\theta)$ by Theorem \ref{hd=sim}. The second statement follows from
$$\dim\operatorname{Hom}_{\Bbbk{\bf B}}(\theta,\mathbb{L}(\theta))=\dim\operatorname{Hom}_{\Bbbk{\bf G}}(\mathbb{M}(\theta),\mathbb{L}(\theta))=1.$$
This completes the proof.
\end{proof}
\noindent Thus, we have classified all simple $\Bbbk{\bf G}$-modules with ${\bf B}$-stable line (in principle) up to  isomorphism.

\section{Construction of simple modules with ${\bf B}$-stable line}
The goal of this section is to construct $\mathbb{L}(\theta)$ explicitly, and give the connection between $\mathbb{L}(\theta)$ and rational representations.

\medskip
\noindent We first deal with the case when $\mathbb{L}(\theta)$ is finite-dimensional.

\begin{Lem}\label{irrdg}
A $\Bbbk{\bf G}$-module is irreducible if and only if it is irreducible as $\Bbbk\mathcal{D}{\bf G}$-module.
\end{Lem}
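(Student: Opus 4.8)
The plan is to prove the two implications of Lemma~\ref{irrdg} separately. The direction ``irreducible as a $\Bbbk\mathcal{D}{\bf G}$-module $\Rightarrow$ irreducible as a $\Bbbk{\bf G}$-module'' is immediate: since $\mathcal{D}{\bf G}\subseteq{\bf G}$, every $\Bbbk{\bf G}$-submodule of a module $M$ is a fortiori a $\Bbbk\mathcal{D}{\bf G}$-submodule, so if $M$ has no proper nonzero $\Bbbk\mathcal{D}{\bf G}$-submodule it has no proper nonzero $\Bbbk{\bf G}$-submodule. All the content is in the converse.

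So let $M$ be an irreducible $\Bbbk{\bf G}$-module, and write ${\bf Z}={\bf Z}({\bf G})^\circ$ for the connected centre, a central torus in ${\bf G}$. The key step is to show that ${\bf Z}$ acts on $M$ by a character. Fix $z\in{\bf Z}$. Since ${\bf Z}$ is a torus over $\Bbbk=\bar{\mathbb{F}}_p$, the element $z$ has finite order $n$ with $p\nmid n$, so the operator giving the action of $z$ on $M$ satisfies $X^n=1$ and is hence diagonalisable, with eigenspaces indexed by $n$-th roots of unity in $\Bbbk$; as $z$ is central in ${\bf G}$, each of these eigenspaces is a $\Bbbk{\bf G}$-submodule of $M$, and irreducibility forces one of them to be all of $M$, i.e.\ $z$ acts by a scalar. (One could instead note that $z$ maps to a root of unity of order prime to $p$ in the division ring $\operatorname{End}_{\Bbbk{\bf G}}(M)$ of Schur's lemma and therefore lies in $\Bbbk\cdot\mathrm{id}$; the eigenspace argument just avoids worrying about $M$ being infinite-dimensional.) Since this holds for every $z\in{\bf Z}$, the torus ${\bf Z}$ acts on $M$ through a single character.

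To conclude, recall the standard structure fact that a connected reductive group satisfies ${\bf G}={\bf Z}({\bf G})^\circ\cdot\mathcal{D}{\bf G}$. Let $N\subseteq M$ be a nonzero $\Bbbk\mathcal{D}{\bf G}$-submodule. Then $N$ is stable under $\mathcal{D}{\bf G}$ by hypothesis and under ${\bf Z}$ because ${\bf Z}$ acts by scalars, hence stable under ${\bf G}={\bf Z}\mathcal{D}{\bf G}$; thus $N$ is a nonzero $\Bbbk{\bf G}$-submodule of the irreducible module $M$, so $N=M$, and $M$ is irreducible as a $\Bbbk\mathcal{D}{\bf G}$-module. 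I do not expect a real obstacle here: the only points needing a word of justification are that an element of a torus over $\bar{\mathbb{F}}_p$ has order prime to $p$ (this is what makes the central action semisimple one element at a time and lets us bypass Schur's lemma in infinite dimensions) and the decomposition ${\bf G}={\bf Z}({\bf G})^\circ\,\mathcal{D}{\bf G}$ of a connected reductive group, both of which are standard and recalled in the references cited in Section~1.
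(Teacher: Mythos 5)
Your proof is correct and takes essentially the same approach as the paper: decompose ${\bf G}$ as (an isogenous image of) $\mathcal{D}{\bf G}\times{\bf Z}({\bf G})^\circ$, observe that the central torus acts through a single character because its elements have finite order prime to $p$ and hence act diagonalisably with central eigenspaces, and conclude. You are in fact slightly more careful than the paper's version, which just asserts that finite-order central elements act by scalars without noting that the eigenspace argument is what lets one avoid invoking Schur's lemma over $\bar{\mathbb{F}}_p$ for possibly infinite-dimensional modules.
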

\begin{proof}
Let $V$ be an irreducible $\Bbbk{\bf G}$-module. The ``if" part is obvious. One has the isogeny $\phi:\mathcal{D}{\bf G}\times{\bf T}''\rightarrow{\bf G}$, where ${\bf T}''$ is a torus. Since $\phi({\bf T}'')$ is contained in the center of ${\bf G}$ and each element of $\phi({\bf T}'')$ has finite order, each element of $\phi({\bf T}'')$ acts on $V$ by a scalar, and hence $V$ remains irreducible when restricted to $\mathcal{D}{\bf G}$. This proves the ``only if" part.
\end{proof}

\begin{Cor}\label{L+}
{\it Let $\theta\in\widehat{\bf T}$, and assume that $\pi(\theta)\in X({\bf T}')^+$. Then $\dim\mathbb{L}(\theta)<\infty$.}
\end{Cor}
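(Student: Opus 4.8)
\medskip
\noindent\emph{Proof proposal.} The plan is to show that $\mathbb{L}(\theta)$ is finite-dimensional by restricting everything to $\mathcal{D}{\bf G}$ and recognizing the restriction as an ordinary irreducible rational module. The key structural observation I would use is that the subtorus ${\bf T}'$ generated by the ${\bf T}_i$ is nothing but ${\bf T}\cap\mathcal{D}{\bf G}$, a maximal torus of $\mathcal{D}{\bf G}$: indeed ${\bf T}'\subseteq{\bf T}\cap\mathcal{D}{\bf G}$ since each ${\bf T}_i\subseteq{\bf G}_i\subseteq\mathcal{D}{\bf G}$, and both are subtori of ${\bf T}$ whose cocharacter groups have the same $\mathbb{Q}$-span (the span of the coroots), hence coincide (a subtorus is determined by a saturated sublattice of $Y({\bf T})$). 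Consequently the hypothesis $\pi(\theta)\in X({\bf T}')^+$ says precisely that $\lambda:=\theta|_{{\bf T}'}=\theta|_{{\bf T}\cap\mathcal{D}{\bf G}}$ is a dominant weight of $\mathcal{D}{\bf G}$.

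Next I would make the reduction to $\mathcal{D}{\bf G}$ precise. By Lemma \ref{irrdg}, $\mathbb{L}(\theta)$ remains irreducible over $\Bbbk\mathcal{D}{\bf G}$. Since ${\bf G}=\mathcal{D}{\bf G}\cdot{\bf T}$ (as ${\bf G}=Z({\bf G})^\circ\mathcal{D}{\bf G}$ with $Z({\bf G})^\circ\subseteq{\bf T}$) and ${\bf B}$ acts on the line $\Bbbk{\bf 1}_\theta$ by the character $\theta$, the image $\bar{\bf 1}_\theta$ of ${\bf 1}_\theta$ in $\mathbb{L}(\theta)$ already generates $\mathbb{L}(\theta)$ as a $\Bbbk\mathcal{D}{\bf G}$-module, and it is an eigenvector for ${\bf B}\cap\mathcal{D}{\bf G}$ of weight $\lambda$ (the unipotent part lies in ${\bf U}$, which acts trivially, and ${\bf T}\cap\mathcal{D}{\bf G}$ acts by $\lambda$). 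Hence the tautological $\Bbbk\mathcal{D}{\bf G}$-map from the abstract induced $\mathcal{D}{\bf G}$-module $\mathbb{M}^{\mathcal{D}{\bf G}}(\lambda)$ onto $\mathbb{L}(\theta)|_{\mathcal{D}{\bf G}}$ is a surjection onto an irreducible module; applying Theorem \ref{hd=sim} to $\mathcal{D}{\bf G}$, $\mathbb{L}(\theta)|_{\mathcal{D}{\bf G}}$ is the unique simple quotient of $\mathbb{M}^{\mathcal{D}{\bf G}}(\lambda)$.

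Finally I would invoke ordinary highest-weight theory: the Weyl module $V(\lambda)$ for $\mathcal{D}{\bf G}$ is finite-dimensional, is generated by its highest weight vector $v_\lambda$, and $\Bbbk v_\lambda$ is a $({\bf B}\cap\mathcal{D}{\bf G})$-stable line of weight $\lambda$; thus $v_\lambda$ is the image of $1\otimes{\bf 1}_\lambda$ under a surjection $\mathbb{M}^{\mathcal{D}{\bf G}}(\lambda)\twoheadrightarrow V(\lambda)$. Therefore the simple head $L(\lambda)$ of $V(\lambda)$ is the unique simple quotient of $\mathbb{M}^{\mathcal{D}{\bf G}}(\lambda)$, so $\mathbb{L}(\theta)|_{\mathcal{D}{\bf G}}\cong L(\lambda)$ and $\dim_k\mathbb{L}(\theta)=\dim_k L(\lambda)<\infty$. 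I expect the main obstacle to be the bookkeeping in the reduction step: carefully checking that $\bar{\bf 1}_\theta$ generates over $\mathcal{D}{\bf G}$ and, especially, justifying ${\bf T}'={\bf T}\cap\mathcal{D}{\bf G}$ so that the dominance hypothesis transfers verbatim to $\mathcal{D}{\bf G}$; everything after that is formal. (Alternatively, one may avoid passing to $\mathcal{D}{\bf G}$: lift $\lambda$ to a dominant $\widetilde\lambda\in X({\bf T})^+$ using surjectivity of $X({\bf T})\to X({\bf T}')$, extend $\theta-\widetilde\lambda\in\operatorname{Ker}\pi$ to a possibly non-rational character $\widehat\mu$ of the torus ${\bf G}/\mathcal{D}{\bf G}\cong{\bf T}/{\bf T}'$, and observe that the finite-dimensional ${\bf G}$-module $V(\widetilde\lambda)\otimes_k\Bbbk_{\widehat\mu}$ carries a ${\bf B}$-stable line of weight $\theta$, hence receives a nonzero $\Bbbk{\bf G}$-homomorphism from $\mathbb{M}(\theta)$ whose image is a finite-dimensional quotient of $\mathbb{M}(\theta)$ having $\mathbb{L}(\theta)$ as its head.)
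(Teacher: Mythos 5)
Your proof is correct and follows essentially the same route as the paper: restrict to $\mathcal{D}{\bf G}$ via Lemma \ref{irrdg}, observe a ${\bf B}'$-stable line of weight $\pi(\theta)$, and invoke the uniqueness result (Corollary \ref{unique}/Theorem \ref{hd=sim}) to identify $\mathbb{L}(\theta)|_{\mathcal{D}{\bf G}}$ with the finite-dimensional rational module $L(\pi(\theta))$. You merely make two of the paper's implicit steps explicit — checking ${\bf T}'={\bf T}\cap\mathcal{D}{\bf G}$, and using the Weyl module $V(\lambda)$ to see that $L(\lambda)$ really is the unique simple quotient of the abstract induced module — which is a welcome clarification but not a genuinely different argument.
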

\begin{proof}
Lemma \ref{irrdg} implies $\mathbb{L}(\theta)$ is an irreducible $\Bbbk\mathcal{D}{\bf G}$-module. It is clear that $\mathbb{L}(\theta)$ contains a ${\bf B}'={\bf T}'\ltimes{\bf U}$-stable line associated to $\pi(\theta)$. It follows that $\mathbb{L}(\theta)\simeq L(\pi(\theta))$ as $\Bbbk\mathcal{D}{\bf G}$-modules by Corollary \ref{unique}, and hence $\dim\mathbb{L}(\theta)=\dim L(\pi(\theta))<\infty$.
\end{proof}

\begin{Prop}\label{ifpart}
Assume that $\mathcal{D}{\bf G}$ is simply connected semisimple. Let $\theta\in\widehat{\bf T}$ and assume that $\theta|_{{\bf T}_i}\in\mathcal{X}_0$ for any $i\in I$. Then $\mathbb{L}(\theta)\simeq \mathbb{L}(\theta_1)^{\omega_1}\otimes\cdots\otimes \mathbb{L}(\theta_l)^{\omega_l}$ for some $\theta_1,\cdots,\theta_l\in \mathbb{X}_1$ and distinct automorphisms $\omega_1,\cdots,\omega_l\in\mathbb{G}$, and hence $\dim\mathbb{L}(\theta)<\infty$.
\end{Prop}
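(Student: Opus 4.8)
The plan is to combine the decomposition $\theta=\sum_{i=1}^l\theta_i^{\omega_i}$ provided by Lemma \ref{kkk} with the standard Steinberg tensor product philosophy, but carried out at the level of abstract induced modules. First I would invoke Lemma \ref{irrdg} to reduce to the case ${\bf G}=\mathcal{D}{\bf G}$, so that we may assume ${\bf G}$ is simply connected semisimple and $\theta\in\widehat{\bf T}$ with $\theta|_{{\bf T}_i}\in\mathcal{X}_0$ for all $i\in I$; applying Lemma \ref{kkk} gives $\theta_1,\dots,\theta_l\in\mathbb{X}_1$ and distinct $\omega_1,\dots,\omega_l\in\mathbb{G}$ with $\theta=\sum_{i=1}^l\theta_i^{\omega_i}$. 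Each $\theta_i\in\mathbb{X}_1$ satisfies $0\le\langle\theta_i|_{{\bf T}'},\alpha_j^\vee\rangle<p$, so $\pi(\theta_i)$ is a restricted dominant weight; by Corollary \ref{L+} (applied with the dominance hypothesis) each $\mathbb{L}(\theta_i)$ is finite-dimensional, and in fact $\mathbb{L}(\theta_i)\simeq L(\pi(\theta_i))$ as $\Bbbk\mathcal{D}{\bf G}$-modules, a restricted irreducible rational module. Hence each Frobenius twist $\mathbb{L}(\theta_i)^{\omega_i}$ is a well-defined finite-dimensional irreducible $\Bbbk{\bf G}$-module (the twist by $\omega_i\in\mathbb{G}=\operatorname{Gal}(\bar{\mathbb F}_p/\mathbb F_p)$ makes sense on a rational module over $\bar{\mathbb F}_p$), and $V:=\mathbb{L}(\theta_1)^{\omega_1}\otimes\cdots\otimes\mathbb{L}(\theta_l)^{\omega_l}$ is finite-dimensional.

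Next I would show that $V$ contains a ${\bf B}$-stable line affording the character $\theta$, after which Corollary \ref{unique} forces $\mathbb{L}(\theta)\simeq V$ and the proposition follows. For this, pick in each factor $\mathbb{L}(\theta_i)^{\omega_i}\simeq L(\pi(\theta_i))^{\omega_i}$ the highest weight line: it is a ${\bf B}$-stable line on which ${\bf U}$ acts trivially and ${\bf T}$ acts through the character $\theta_i^{\omega_i}$. Actually one must be careful: $L(\pi(\theta_i))$ is a $\Bbbk\mathcal{D}{\bf G}$-module whose ${\bf T}'$-highest weight is $\pi(\theta_i)=\theta_i|_{{\bf T}'}$, and we need the ${\bf T}$-action on this line to be exactly $\theta_i$ (not merely to agree on ${\bf T}'$). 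By Lemma \ref{irrdg} and the reduction above we have arranged ${\bf T}={\bf T}'$ in the relevant case (${\bf G}$ simply connected semisimple means ${\bf T}$ is the direct product of the ${\bf T}_i$, which generate ${\bf T}'$), so $\theta_i$ and $\pi(\theta_i)$ coincide. Tensoring the $l$ highest weight lines gives a line in $V$ on which ${\bf U}$ acts trivially and ${\bf T}$ acts by $\sum_{i=1}^l\theta_i^{\omega_i}=\theta$; thus $V$ has a ${\bf B}$-stable line associated to $\theta$. It remains to check $V$ is irreducible as a $\Bbbk{\bf G}$-module: since the $\omega_i$ are distinct elements of the profinite group $\mathbb{G}$, the classical Steinberg tensor product theorem (and its extension to Frobenius twists by arbitrary elements of $\operatorname{Gal}(\bar{\mathbb F}_p/\mathbb F_p)$, valid because each $\mathbb{L}(\theta_i)=L(\pi(\theta_i))$ is restricted) shows $V$ is an irreducible rational ${\bf G}$-module, hence irreducible as abstract $\Bbbk{\bf G}$-module. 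Then Corollary \ref{unique} gives $\mathbb{L}(\theta)\simeq V$, and in particular $\dim\mathbb{L}(\theta)=\prod_{i=1}^l\dim L(\pi(\theta_i))<\infty$.

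The step I expect to be the main obstacle is justifying irreducibility of the twisted tensor product $V$ over the profinite index set: the usual Steinberg tensor product theorem is stated for finitely many Frobenius powers $F^0,F,\dots,F^{m-1}$ with restricted weights, whereas here the twists $\omega_i$ are arbitrary distinct elements of $\mathbb{G}=\varprojlim \mathbb{Z}/n!\mathbb{Z}$. One must argue that for a restricted $L(\lambda)$ the twist $L(\lambda)^{\omega}$ depends, up to isomorphism, only on the image of $\omega$ in $\mathbb{Z}/n!\mathbb{Z}$ for suitable $n$, reducing to the classical finite case; equivalently, one checks that distinct $\omega_i$ produce Frobenius twists by distinct residues modulo a large enough $n!$, which is exactly the content of the congruence/Galois correspondence recorded in Remark \ref{Galois}. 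Alternatively, one can sidestep the general twisted Steinberg theorem entirely and argue directly: realize $\theta$ as a restriction from some $G_n$ with $n$ large, use Theorem \ref{restrictionthm} to identify $L(\lambda)|_{G_n}$ (with $\lambda=\theta|_{T_n}$) together with the $p^{n!}$-adic digit structure of $\lambda$ encoded in the $\theta_i,\omega_i$, and then pass to the union over $n$ — but this essentially re-proves Steinberg's theorem, so the cleanest route is the reduction to the finite case via Remark \ref{Galois}. The remaining verifications (the highest-weight line computation, ${\bf T}={\bf T}'$ after the reduction, and the dominance needed to apply Corollary \ref{L+}) are routine.
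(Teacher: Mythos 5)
Your overall architecture matches the paper's: decompose $\theta=\sum_{i=1}^l\theta_i^{\omega_i}$ via Lemma \ref{kkk}, identify each $\mathbb{L}(\theta_i)$ with $L(\pi(\theta_i))$ via Corollary \ref{L+}, form $V=\mathbb{L}(\theta_1)^{\omega_1}\otimes\cdots\otimes\mathbb{L}(\theta_l)^{\omega_l}$, check that $V$ carries a ${\bf B}$-stable line affording $\theta$, prove $V$ irreducible, and conclude via Corollary \ref{unique}. The gap is in the irreducibility step, which you correctly flag as the main obstacle but then misdiagnose. The assertion that $V$ is ``an irreducible rational ${\bf G}$-module'' is false: for $\omega_i\in\mathbb{G}\simeq\widehat{\mathbb Z}$ not an integer, the abstract automorphism of ${\bf G}$ induced by $\omega_i$ is not a morphism of algebraic groups, so $L(\pi(\theta_i))^{\omega_i}$ is not a rational ${\bf G}$-module and there is no version of Steinberg's tensor product theorem in the rational category that applies. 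Your proposed fix, route (a), also rests on an incorrect claim: it is not true that $L(\lambda)^\omega$ depends up to isomorphism only on the image of $\omega$ in $\mathbb{Z}/n!\mathbb{Z}$ --- the ${\bf T}$-character on the highest weight line is $\lambda^\omega$, so distinct $\omega$ give pairwise non-isomorphic $\Bbbk{\bf G}$-modules. What is true, and what is needed, is the weaker statement that the restriction $L(\lambda)^\omega|_{\mathcal{D}G_n}$ depends only on $\omega$ modulo $n!$.

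Your route (b), which you dismiss as ``essentially re-proving Steinberg's theorem,'' is in fact the paper's argument and the correct one, and it does not re-prove anything. Each $\omega_i$ corresponds to a compatible sequence $(g_{i,n})_{n\ge N}$ as in Remark \ref{Galois}; on $\mathcal{D}G_n$ the twist by $\omega_i$ is the genuine Frobenius power $[g_{i,n}]$, so $V|_{\mathcal{D}G_n}\simeq L(\pi(\theta_1))^{[g_{1,n}]}\otimes\cdots\otimes L(\pi(\theta_l))^{[g_{l,n}]}$. Steinberg's tensor product theorem (applied once, for each fixed $n$, to finitely many honest Frobenius powers with restricted factors) identifies this with $L(\lambda_n)$ where $\lambda_n=\sum_ip^{g_{i,n}}\pi(\theta_i)\in X_n({\bf T}')$, and Theorem \ref{restrictionthm} says $L(\lambda_n)$ remains irreducible over $\mathcal{D}G_n$. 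Passing to the union over $n$ (the paper cites \cite[Lemma 1.5]{Xi}; here one may also argue directly, since a nonzero $\Bbbk\mathcal{D}{\bf G}$-submodule of the finite-dimensional $V$ would already be a nonzero $\Bbbk\mathcal{D}G_n$-submodule for large $n$) gives irreducibility of $V$ as a $\Bbbk\mathcal{D}{\bf G}$-module, and Lemma \ref{irrdg} then gives irreducibility over $\Bbbk{\bf G}$. This passage through finite levels cannot be avoided, precisely because the profinite twists take you outside the rational category.
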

\begin{proof}
By Lemma \ref{kkk}, one has $\theta=\sum_{i=1}^l\theta_i^{\omega_i}$ for some $\theta_1,\cdots,\theta_l\in X_1$ and distinct automorphisms $\omega_1,\cdots,\omega_l\in\mathbb{G}$. Clearly, $\mathbb{L}(\theta_1)^{\omega_1}\otimes\cdots\otimes \mathbb{L}(\theta_l)^{\omega_l}$ is isomorphic to $L(\pi(\theta_1))^{\omega_1}\otimes\cdots\otimes L(\pi(\theta_l))^{\omega_l}$ as $\Bbbk\mathcal{D}{\bf G}$-modules. Each $\omega_i$ corresponds to the sequence $(g_{i,n})_{n\ge N}$ satisfying assumptions in Lemma \ref{red} (ii). Set $\lambda_n=\sum_{i=1}^lp^{g_{i,n}}\pi(\theta_i)$. Since $\mathcal{D}{\bf G}$ is simply connected semisimple, we have $L(\lambda_n)=L(\pi(\theta_1))^{[g_{1,n}]}\otimes\cdots\otimes L(\pi(\theta_l))^{[g_{l,n}]}$ for $n\ge N$ by the Steinberg's tensor product theorem, where the superscript $[m]$ means $m$-th Frobenius twist. Therefore, $L(\pi(\theta_1))\otimes\cdots\otimes L(\pi(\theta_l))$
is the common underlying space of all $L(\lambda_n)$ $(n\in\mathbb{N})$. Now we obtain a system $\cdots\rightarrow L(\lambda_n)\rightarrow L(\lambda_{n+1})\rightarrow\cdots$, in which each $L(\lambda_n)$ is an irreducible $\Bbbk\mathcal{D}G_n$-module (by Theorem \ref{restrictionthm}) and all maps are identity of underlying space. Clearly, $L(\lambda_n)\rightarrow L(\lambda_{n+1})$ is a $\mathcal{D}G_n$-module homomorphism for all $n$, and hence the direct limit of the system is the $\Bbbk\mathcal{D}{\bf G}$-module $L(\pi(\theta_1))^{\omega_1}\otimes\cdots\otimes L(\pi(\theta_l))^{\omega_n}$ which is irreducible by \cite[Lemma 1.5]{Xi}. It follows that $\mathbb{L}(\theta_1)^{\omega_1}\otimes\cdots\otimes \mathbb{L}(\theta_l)^{\omega_l}$ is an irreducible $\Bbbk{\bf G}$-module by Lemma \ref{irrdg},
and contains a ${\bf B}$-stable line associated to $\theta$. It follows that $\mathbb{L}(\theta)\simeq\mathbb{L}(\theta_1)^{\omega_1}\otimes\cdots\otimes \mathbb{L}(\theta_l)^{\omega_l}$ by Corollary \ref{unique}. By Corollary \ref{L+}, $\dim\mathbb{L}(\theta_i)<\infty~(1\le i\le l)$, and hence $\dim\mathbb{L}(\theta)<\infty$.
\end{proof}

\begin{Rem}\label{r}
\normalfont
(i) We still have $\dim\mathbb{L}(\theta)<\infty$ if $\theta|_{{\bf T}_i}\in\mathcal{X}_0$ for any $i\in I$, without the assumption that $\mathcal{D}{\bf G}$ is simply connected. Indeed, let $\widetilde{\bf G}$ be the simply connected cover of $\mathcal{D}{\bf G}$. We have the natural homomorphism $\varphi: \widetilde{\bf G}\rightarrow{\bf G}$ given by the composition $\widetilde{\bf G}\twoheadrightarrow\mathcal{D}{\bf G}\rightarrow{\bf G}$. By Proposition \ref{ifpart}, $\mathbb{L}(\theta)$ is finite-dimensional when pulled back to $\widetilde{\bf G}$, and hence $\dim\mathbb{L}(\theta)<\infty$.

\medskip
\noindent(ii) Assume that $\theta|_{{\bf T}_i}\in\mathcal{X}_0$ for any $i\in I$ as in (i).  Proposition \ref{ifpart} implies that $\mathbb{L}(\theta)$ is isomorphic to $L(\lambda_n)=L(\pi(\theta_1))^{[g_{1,n}]}\otimes\cdots\otimes L(\pi(\theta_l))^{[g_{l,n}]}$ if $n\ge N$ (keep notation in the proof of Proposition \ref{ifpart}) as $\Bbbk\widetilde{G}_n$-modules which is irreducible by Theorem \ref{restrictionthm}. Choose $a\in\mathbb{N}^*$ so that $\varphi(\widetilde{G}_n)\subset G_a$, then $\mathbb{L}(\theta)$ is an irreducible $\Bbbk G_a$-module.
\end{Rem}

\noindent Now we turn to the general case.
\begin{Thm}\label{paraind}
Let $J=\{i\in I\mid\theta|_{{\bf T}_i}\in\mathcal{X}_0\}$, and ${\bf P}_J={\bf L}_J\ltimes{\bf U}_J$ be the Levi decomposition. Let $\mathbb{L}_J(\theta)$ be the unique irreducible $\Bbbk{\bf L}_J$-module containing the ${\bf B}\cap{\bf L}_J$-stable line associated to $\theta\in\widehat{\bf T}$. Then $\operatorname{Ind}_{\Bbbk{\bf P}_J}^{\Bbbk\bf G}\mathbb{L}_J(\theta)$ is irreducible.
\end{Thm}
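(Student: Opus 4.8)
The plan is to show that $M:=\operatorname{Ind}_{\Bbbk{\bf P}_J}^{\Bbbk{\bf G}}\mathbb{L}_J(\theta)$ has no nonzero proper $\Bbbk{\bf G}$-submodule. First I would observe that $M$ is a quotient of $\mathbb{M}(\theta)$: by transitivity of induction it suffices that $\mathbb{L}_J(\theta)$ be a quotient of $\Bbbk{\bf P}_J\otimes_{\Bbbk{\bf B}}\theta$, and indeed the latter, viewed as a $\Bbbk{\bf L}_J$-module, is $\Bbbk{\bf L}_J\otimes_{\Bbbk({\bf B}\cap{\bf L}_J)}\theta$ with ${\bf U}_J$ acting trivially, whose unique simple quotient is $\mathbb{L}_J(\theta)$ by Theorem \ref{hd=sim} and Corollary \ref{unique} applied to ${\bf L}_J$. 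Since $M\neq0$, it follows from Theorem \ref{hd=sim} that $M$ has a unique maximal $\Bbbk{\bf G}$-submodule, so it is enough to prove that every nonzero $\Bbbk{\bf G}$-submodule of $M$ equals $M$. Also, $\mathbb{L}_J(\theta)$ is finite-dimensional: $\theta|_{{\bf T}_i}\in\mathcal{X}_0$ for all $i\in J$ by the definition of $J$, so Remark \ref{r} applies with ${\bf L}_J$ in place of ${\bf G}$. Hence for $a$ large enough $\mathbb{L}_J(\theta)=\Bbbk L_{J,a}{\bf 1}_\theta$ is an irreducible $\Bbbk L_{J,a}$-module, and writing $v_0:=1\otimes{\bf 1}_\theta$ one gets $\Bbbk G_av_0=\Bbbk G_a\otimes_{\Bbbk P_{J,a}}\mathbb{L}_J(\theta)=:M_a$, with $M=\bigcup_a M_a$ and $M=\Bbbk{\bf G}v_0$.

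Now let $N$ be a nonzero $\Bbbk{\bf G}$-submodule and fix $0\neq x\in N$; then $x\in M_a$ for $a$ large. The second step is to adapt the $M$-analogues of Lemmas \ref{fixedpt}, \ref{key} and \ref{forsomew}. Since ${\bf 1}_\theta$ is fixed by all of ${\bf U}$ inside $\mathbb{L}_J(\theta)$ (it spans the highest weight line), the vector $\dot w\otimes{\bf 1}_\theta\in M$ behaves exactly as $\dot w{\bf 1}_\theta$ does in $\mathbb{M}(\theta)$, so the commutator relations (\ref{commrel}) and the $U_b$-fixed-point argument of Lemma \ref{fixedpt} apply verbatim; carrying them out as in the proof of Lemma \ref{forsomew} shows that $\Bbbk{\bf G}x$ contains an element of the form $\underline{U_{w^{-1},c}}\,\dot w\otimes{\bf 1}_\theta$ for some $w\in W^J$ and some $c$. (Alternatively one can use that $M_a$ has a simple $\Bbbk G_a$-socle — by Frobenius reciprocity together with Smith's theorem on $U_{J,a}$-fixed points of irreducible $\Bbbk G_a$-modules — and that this socle is generated by an explicit vector of the above shape.)

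The third step is to ``climb this vector down'' to the $e$-cell. Fix a reduced expression $w=s_{i_1}\cdots s_{i_r}$; since $w\in W^J$, every tail $w^{(j)}:=s_{i_{j+1}}\cdots s_{i_r}$ again lies in $W^J$, and $w^{(j)}=s_{i_{j+1}}w^{(j+1)}$ with $\ell(w^{(j)})=\ell(w^{(j+1)})+1$. Applying the $M$-version of Lemma \ref{c2} successively with $w^{(j+1)}$ in place of $w$ and $s_{i_{j+1}}$ in place of $s_k$ for $j=0,1,\dots,r-1$, one obtains $\Bbbk{\bf G}\,\underline{U_{w^{-1},c}}\,\dot w\otimes{\bf 1}_\theta=\Bbbk{\bf G}\,\underline{U_{(w^{(1)})^{-1},c_1}}\,\dot{w^{(1)}}\otimes{\bf 1}_\theta=\cdots=\Bbbk{\bf G}(1\otimes{\bf 1}_\theta)=M$. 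Therefore $\Bbbk{\bf G}x=M$, forcing $N=M$, so $M$ is irreducible.

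The main obstacle is justifying the $j$-th step of this climb: Lemma \ref{c2} requires that either $({\bf U}_{(w^{(j+1)})^{-1}})^{s_{i_{j+1}}}\neq{\bf U}_{(w^{(j+1)})^{-1}}$ or $\theta^{w^{(j+1)}}|_{{\bf T}_{i_{j+1}}}\in\mathcal{X}_1$, and when the first, purely geometric, alternative fails one must verify the second. This is exactly where the choice $J=\{i:\theta|_{{\bf T}_i}\in\mathcal{X}_0\}$ enters: the crucial point is the root-combinatorial fact that, along a reduced word of an element of $W^J$ all of whose tails lie in $W^J$, whenever the geometric condition fails the relevant character $\theta^{w^{(j+1)}}|_{{\bf T}_{i_{j+1}}}$ is a restriction of $\theta$ to a root subtorus attached to a simple root \emph{outside} $J$, hence lies in $\mathcal{X}_1$; for instance the bottom step $j=r-1$ needs only $\theta|_{{\bf T}_{i_r}}\in\mathcal{X}_1$, which holds because $s_{i_r}\in W^J$ forces $i_r\notin J$. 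Carrying out this combinatorial analysis in general, together with the (routine but care-requiring) transcription of Lemmas \ref{fixedpt}, \ref{key}, \ref{forsomew} and \ref{c2} from the cyclic module $\mathbb{M}(\theta)$ to $M$, is where the real work lies.
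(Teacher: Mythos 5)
Your outline matches the paper's proof essentially step for step: pass to $U_a$-fixed points of $\Bbbk G_a x$ for $a$ large using Lemma~\ref{fixedpt}, reduce via the Bruhat decomposition and (the $W^J$-version of) Lemma~\ref{forsomew} to a single vector $\underline{U_{w^{-1},b}}\dot w\,{\bf 1}_{\theta,J}$ with $w\in W^J$, then climb down to ${\bf 1}_{\theta,J}$ by repeated application of Lemma~\ref{c2}, with the definition of $J$ guaranteeing that condition~(ii) holds whenever the geometric condition~(i) fails. The two points you flag as ``real work'' are exactly what the paper supplies: the explicit identification $(\operatorname{Ind}_{\Bbbk P_{J,a}}^{\Bbbk G_a}\mathbb{L}_J(\theta))^{U_a}=\bigoplus_{w\in W^J}\Bbbk\underline{U_{w^{-1},a}}\dot w\,{\bf 1}_{\theta,J}$, and the combinatorial observation that if $v\in W^J$, $s_iv\in W^J$ with $s_iv>v$, and $({\bf U}_{v^{-1}})^{s_i}={\bf U}_{v^{-1}}$, then $v^{-1}(\alpha_i)$ is a simple root $\alpha_j$ with $j\notin J$, which makes $\theta^v|_{{\bf T}_i}=\theta|_{{\bf T}_j}\in\mathcal{X}_1$.
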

\begin{proof}
 For any $0\neq x\in N\in\operatorname{Ind}_{\Bbbk{\bf P}_J}^{\Bbbk\bf G}\mathbb{L}_J(\theta)=\bigcup_a\operatorname{Ind}_{\Bbbk P_{J,a}}^{\Bbbk G_a}\mathbb{L}_J(\theta)$, we have $x\in\operatorname{Ind}_{\Bbbk P_{J,a}}^{\Bbbk G_a}\mathbb{L}_J(\theta)$ for some $a\in\mathbb{N}^*$. We can assume that $a$ is large enough so that $\mathbb{L}_J(\theta)$ is irreducible $\Bbbk L_{J,a}$-module thanks to Remark \ref{r} (ii). Let ${\bf 1}_{\theta,J}$ be a nonzero vector in the ${\bf B}\cap{\bf L}_J$-stable line (which is unique thanks to Corollary \ref{unique}) in $\mathbb{L}_J(\theta)$. We claim that
\begin{equation}\label{cl}
(\operatorname{Ind}_{\Bbbk P_{J,a}}^{\Bbbk G_a}\mathbb{L}_J(\theta))^{U_a}
=\bigoplus_{w\in W^J}\Bbbk\underline{U_{w^{-1},a}}\dot{w}{\bf 1}_{\theta,J}.
\end{equation}
Indeed, the Bruhat decomposition implies that the decomposition
$$\operatorname{Ind}_{\Bbbk P_{J,a}}^{\Bbbk G_a}\mathbb{L}_J(\theta)=\bigoplus_{w\in W^J}\Bbbk U_{w^{-1},a}\dot{w}\mathbb{L}_J(\theta)$$
of $\Bbbk U_a$-modules, and hence
$$(\operatorname{Ind}_{\Bbbk P_{J,a}}^{\Bbbk G_a}\mathbb{L}_J(\theta))^{U_a}=\bigoplus_{w\in W^J}(\Bbbk U_{w^{-1},a}\dot{w}\mathbb{L}_J(\theta))^{U_a}\subset\bigoplus_{w\in W^J}(\Bbbk U_{w^{-1},a}\dot{w}\mathbb{L}_J(\theta))^{U_{w^{-1},a}}.$$
Moreover, for each $w\in W^J$, $U_{w^{-1},a}$ acts freely on $\Bbbk U_{w^{-1},a}\dot{w}\mathbb{L}_J(\theta)$, and hence $(\Bbbk U_{w^{-1},a}\dot{w}\mathbb{L}_J(\theta))^{U_{w^{-1},a}}=\Bbbk \underline{U_{w^{-1},a}}\dot{w}\mathbb{L}_J(\theta)$. It follows that
$$(\operatorname{Ind}_{\Bbbk P_{J,a}}^{\Bbbk G_a}\mathbb{L}_J(\theta))^{U_a}\subset\bigoplus_{w\in W^J}\Bbbk\underline{U_{w^{-1},a}}\dot{w}\mathbb{L}_J(\theta).$$
Let $w\in W^J$, $v\in\mathbb{L}_J(\theta)$, and
\begin{equation}\label{compare}
y=\underline{U_{w^{-1},a}}\dot{w}v=\dot{w}v+\underline{U_{w^{-1},a}\backslash\{1\}}\dot{w}v\in(\operatorname{Ind}_{\Bbbk P_{J,a}}^{\Bbbk G_a}\mathbb{L}_J(\theta))^{U_a}\cap\underline{U_{w^{-1},a}}\dot{w}\mathbb{L}_J(\theta).
\end{equation}
 For any $u\in U_a\cap L_{J,a}$, we have $\dot{w}u\dot{w}^{-1}\in U_{w^{-1},a}'$. It follows that $\dot{w}u\dot{w}^{-1}y=\dot{w}uv+(*)$, where $(*)$ is a combination of elements of the form $u'wv'$ with $u'\in U_{w^{-1},a}\backslash\{1\}$ and $v'\in\mathbb{L}_J(\theta)$. Since $\dot{w}u\dot{w}^{-1}y=y$, one has $uv=v$ by comparing with (\ref{compare}). It follows that $v\in\mathbb{L}_J(\theta)^{U_a\cap L_{J,a}}=\Bbbk{\bf 1}_{\theta,J}$. This proves the ``$\subset$" part of (\ref{cl}). The other inclusion is obvious, and hence (\ref{cl}) is proved. Therefore,
\begin{equation}\label{thanks}
0\ne(\Bbbk G_ax)^{U_a}\subset(\operatorname{Ind}_{\Bbbk P_{J,a}}^{\Bbbk G_a}\mathbb{L}_J(\theta))^{U_a}
=\bigoplus_{w\in W^J}\Bbbk\underline{U_{w^{-1},a}}\dot{w}{\bf 1}_{\theta,J},
\end{equation}
where ``$\ne$" follows from Lemma \ref{fixedpt}. Thanks to (\ref{thanks}), one can choose a nonzero element
$$\sum_{w\in W^J}c_w\underline{U_{w^{-1},a}}\dot{w}{\bf 1}_{\theta,J}\in(\Bbbk G_ax)^{U_a},~c_w\in\Bbbk.$$
By Lemma \ref{forsomew}, we have $\underline{U_{w^{-1},b}}\dot{w}{\bf 1}_{\theta,J}\in N$ for some $w\in W^J$ and $b>a$. Clearly, for any $v\in W^J$ and $i\in I$ with $s_iv>v$ and ${\bf U}_{v^{-1}}=({\bf U}_{v^{-1}})^{s_i}$, we have $v^{-1}(\alpha_i)=\alpha_j$ for some $j\in I\backslash J$. It follows that $\theta^v|_{{\bf T}_i}=\theta|_{{\bf T}_j}\in\mathcal{X}_1$ by assumption. Combining this and applying Lemma \ref{c2} (ii) repeatedly yields ${\bf 1}_{\theta,J}\in N$ and hence $N=\operatorname{Ind}_{\Bbbk{\bf P}_J}^{\Bbbk\bf G}\mathbb{L}_J(\theta)$ which completes the proof.
\end{proof}

\begin{Cor}\label{IndPG}
Let $\theta\in\widehat{\bf T}$ and $J=\{i\in I\mid\theta|_{{\bf T}_i}\in\mathcal{X}_0\}$. Then $\mathbb{L}(\theta)\simeq\operatorname{Ind}_{\Bbbk{\bf P}_J}^{\Bbbk{\bf G}}\mathbb{L}_J(\theta)$ $($keeping the notation in Theorem \ref{paraind}$)$. In particular, $\dim\mathbb{L}(\theta)<\infty$ if and only if $\theta|_{{\bf T}_i}\in\mathcal{X}_0$ for any $i\in I$.
\end{Cor}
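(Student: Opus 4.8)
The plan is to identify $\operatorname{Ind}_{\Bbbk{\bf P}_J}^{\Bbbk{\bf G}}\mathbb{L}_J(\theta)$ with $\mathbb{L}(\theta)$ by means of the uniqueness statement in Corollary \ref{unique}, and then to extract the finite-dimensionality criterion from the parabolic Bruhat decomposition.

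First I would exhibit a ${\bf B}$-stable line associated to $\theta$ inside $\operatorname{Ind}_{\Bbbk{\bf P}_J}^{\Bbbk{\bf G}}\mathbb{L}_J(\theta)$. Let ${\bf 1}_{\theta,J}$ span the ${\bf B}\cap{\bf L}_J$-stable line of $\mathbb{L}_J(\theta)$ (its existence and the fact that ${\bf B}\cap{\bf L}_J$ acts on it via $\theta$ are built into the statement of Theorem \ref{paraind}), and regard $\mathbb{L}_J(\theta)$ as a ${\bf P}_J$-module through the Levi decomposition ${\bf P}_J={\bf L}_J\ltimes{\bf U}_J$. For $b\in{\bf B}$ write $b=u\ell$ with $u\in{\bf U}_J$ and $\ell\in{\bf B}\cap{\bf L}_J$; since ${\bf U}_J\subseteq{\bf U}=[{\bf B},{\bf B}]\subseteq\operatorname{Ker}\theta$ and ${\bf U}_J$ acts trivially on the inflation $\mathbb{L}_J(\theta)$, one gets $b\cdot(1\otimes{\bf 1}_{\theta,J})=\theta(\ell)\,(1\otimes{\bf 1}_{\theta,J})=\theta(b)\,(1\otimes{\bf 1}_{\theta,J})$. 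Hence $\Bbbk(1\otimes{\bf 1}_{\theta,J})$ is a ${\bf B}$-stable line of character $\theta$. Since $\operatorname{Ind}_{\Bbbk{\bf P}_J}^{\Bbbk{\bf G}}\mathbb{L}_J(\theta)$ is irreducible by Theorem \ref{paraind}, Corollary \ref{unique} forces $\mathbb{L}(\theta)\simeq\operatorname{Ind}_{\Bbbk{\bf P}_J}^{\Bbbk{\bf G}}\mathbb{L}_J(\theta)$.

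For the last assertion I would use the decomposition
$$\operatorname{Ind}_{\Bbbk{\bf P}_J}^{\Bbbk{\bf G}}\mathbb{L}_J(\theta)=\bigoplus_{w\in W^J}\Bbbk{\bf U}_{w^{-1}}\dot w\,\mathbb{L}_J(\theta)$$
coming from the parabolic Bruhat decomposition ${\bf G}=\bigsqcup_{w\in W^J}{\bf U}_{w^{-1}}\times\{\dot w\}\times{\bf P}_J$, together with $\dim\mathbb{L}_J(\theta)<\infty$ (Remark \ref{r}(i) applied to ${\bf L}_J$, whose simple roots are indexed by $J$ and all of whose $\theta|_{{\bf T}_i}$ lie in $\mathcal{X}_0$). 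If $\theta|_{{\bf T}_i}\in\mathcal{X}_0$ for every $i\in I$ then $J=I$, so ${\bf P}_J={\bf G}$ and $\mathbb{L}(\theta)\simeq\mathbb{L}_J(\theta)$ is finite-dimensional (this case is exactly Proposition \ref{ifpart}/Remark \ref{r}(i)). Conversely, if $\theta|_{{\bf T}_i}\notin\mathcal{X}_0$ for some $i$ then $\theta|_{{\bf T}_i}\in\mathcal{X}_1$, because $\mathcal{X}_0$ and $\mathcal{X}_1$ are respectively the bounded and the unbounded loci of $\{f(m_n)\}$ (Lemmas \ref{red} and \ref{padic}), so $\widehat{\Bbbk^*}=\mathcal{X}_0\sqcup\mathcal{X}_1$; hence $i\notin J$, so $W_J\subsetneq W$ and $W^J$ contains some $w\neq e$. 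Then $\Phi_{w^{-1}}^-\neq\emptyset$, so the algebraic group ${\bf U}_{w^{-1}}$ is infinite, the summand $\Bbbk{\bf U}_{w^{-1}}\dot w\,\mathbb{L}_J(\theta)$ is already infinite-dimensional, and therefore $\dim\mathbb{L}(\theta)=\infty$.

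I do not expect a genuine obstacle: granted Theorem \ref{paraind}, Corollary \ref{unique} and Remark \ref{r}, the corollary is essentially bookkeeping. The two points needing a little care are the verification that the induced module carries a ${\bf B}$-stable line of character precisely $\theta$ (resting on $\theta|_{{\bf U}_J}$ being trivial and on the inflation convention for $\mathbb{L}_J(\theta)$) and the elementary equivalence $W^J=\{e\}\Leftrightarrow J=I$, which is what cleanly separates the finite- from the infinite-dimensional case.
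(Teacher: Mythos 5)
Your proof is correct, and it takes a somewhat different route from the paper's. The paper uses transitivity of induction to write $\mathbb{M}(\theta)=\operatorname{Ind}_{\Bbbk{\bf P}_J}^{\Bbbk{\bf G}}\operatorname{Ind}_{\Bbbk{\bf B}\cap{\bf L}_J}^{\Bbbk{\bf L}_J}\theta$, then applies the functor $\operatorname{Ind}_{\Bbbk{\bf P}_J}^{\Bbbk{\bf G}}(-)$ to the surjection $\operatorname{Ind}_{\Bbbk{\bf B}\cap{\bf L}_J}^{\Bbbk{\bf L}_J}\theta\twoheadrightarrow\mathbb{L}_J(\theta)$ to realize $\operatorname{Ind}_{\Bbbk{\bf P}_J}^{\Bbbk{\bf G}}\mathbb{L}_J(\theta)$ as a quotient of $\mathbb{M}(\theta)$, and concludes by Theorem \ref{hd=sim} (unique simple quotient). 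You instead exhibit the vector $1\otimes{\bf 1}_{\theta,J}$ as a ${\bf B}$-eigenvector of character $\theta$ inside the induced module and then invoke Corollary \ref{unique}; since Corollary \ref{unique} was itself deduced from Theorem \ref{hd=sim} via the universal property of $\mathbb{M}(\theta)$, the two arguments rest on the same underlying fact, but yours trades the functorial ``induction in stages'' step for a direct verification that ${\bf U}_J$ kills the lowest-weight line, which is arguably more concrete. One small point worth noting is that your route does not actually need the exactness (or even right-exactness) of $\operatorname{Ind}_{\Bbbk{\bf P}_J}^{\Bbbk{\bf G}}(-)$, so it is marginally lighter in hypotheses. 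You also spell out the ``in particular'' finiteness criterion via the parabolic Bruhat decomposition $\operatorname{Ind}_{\Bbbk{\bf P}_J}^{\Bbbk{\bf G}}\mathbb{L}_J(\theta)=\bigoplus_{w\in W^J}\Bbbk{\bf U}_{w^{-1}}\dot w\,\mathbb{L}_J(\theta)$, which the paper leaves implicit; this is a genuine gain in completeness, as the ``only if'' direction (infinite-dimensionality of ${\bf U}_{w^{-1}}$ for $w\neq e$) is the point that actually needs saying.
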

\begin{proof}
 It is clear that $\mathbb{M}(\theta)=\operatorname{Ind}_{\Bbbk{\bf P}_J}^{\Bbbk{\bf G}}\operatorname{Ind}_{\Bbbk{\bf B}\cap{\bf L}_J}^{\Bbbk{\bf L}_J}\theta$ by \cite[Lemma 2.2]{Xi}, where we regard $\operatorname{Ind}_{\Bbbk{\bf B}\cap{\bf L}_J}^{\Bbbk{\bf L}_J}\theta$ as a $\Bbbk{\bf P}_J$-module by the homomorphism ${\bf P}_J\rightarrow{\bf L}_J$. Since $\operatorname{Ind}_{\Bbbk{\bf B}\cap{\bf L}_J}^{\Bbbk{\bf L}_J}\theta\twoheadrightarrow\mathbb{L}_J(\theta)$, we have $\mathbb{M}(\theta)\twoheadrightarrow\operatorname{Ind}_{\Bbbk{\bf P}_J}^{\Bbbk{\bf G}}\mathbb{L}_J(\theta)$ by the exactness of $\operatorname{Ind}_{\Bbbk{\bf P}_J}^{\Bbbk{\bf G}}(-)$. Combining Theorem \ref{paraind} and Theorem \ref{hd=sim}, we see that $\operatorname{Ind}_{\Bbbk{\bf P}_J}^{\Bbbk{\bf G}}\mathbb{L}_J(\theta)$ is the unique simple quotient of $\mathbb{M}(\theta)$ and hence $\mathbb{L}(\theta)=\operatorname{Ind}_{\Bbbk{\bf P}_J}^{\Bbbk{\bf G}}\mathbb{L}_J(\theta)$ as desired.
\end{proof}

The module $\mathbb{L}(\theta)$ is called $p$-restricted if it is a rational irreducible module with $p$-restricted highest weight when restricted to $\mathcal{D}{\bf G}$. Proposition \ref{ifpart} implies that $\mathbb{L}_J(\theta)$ in Theorem \ref{paraind} is a twist tensor product of $p$-restricted irreducible ${\bf L}_J$-modules. Combining this and Corollary \ref{IndPG}, we see that
\begin{Thm}
Assume that $\mathcal{D}{\bf G}$ is simply connected semisimple. Then any irreducible $\Bbbk {\bf G}$-module with ${\bf B}$-stable line is a parabolic induction of a twist tensor product of $p$-restricted irreducible modules $($for some Levi subgroup$)$.
\end{Thm}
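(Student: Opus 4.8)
The plan is to obtain the theorem purely by assembling the results of Sections~3 and~4, since the substantive work has already been done there. First I would take an arbitrary irreducible $\Bbbk{\bf G}$-module $V$ possessing a ${\bf B}$-stable line. The character $\theta$ afforded by that line is a character of ${\bf B}$, hence kills ${\bf U}=[{\bf B},{\bf B}]$, so $\theta\in\widehat{\bf T}$, and Corollary~\ref{unique} gives $V\simeq\mathbb{L}(\theta)$. Thus it suffices to describe $\mathbb{L}(\theta)$ for an arbitrary $\theta\in\widehat{\bf T}$. Next, setting $J=\{i\in I\mid\theta|_{{\bf T}_i}\in\mathcal{X}_0\}$ and taking the Levi decomposition ${\bf P}_J={\bf L}_J\ltimes{\bf U}_J$, Corollary~\ref{IndPG} yields $\mathbb{L}(\theta)\simeq\operatorname{Ind}_{\Bbbk{\bf P}_J}^{\Bbbk{\bf G}}\mathbb{L}_J(\theta)$. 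So the problem reduces to showing that the Levi-factor module $\mathbb{L}_J(\theta)$ is a twist tensor product of $p$-restricted irreducible $\Bbbk{\bf L}_J$-modules.

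For that I would apply Proposition~\ref{ifpart} with ${\bf L}_J$ in the role of ${\bf G}$. The simple roots of ${\bf L}_J$ relative to ${\bf B}\cap{\bf L}_J$ and ${\bf T}$ are exactly $\{\alpha_i\mid i\in J\}$, and by the very definition of $J$ we have $\theta|_{{\bf T}_i}\in\mathcal{X}_0$ for every $i\in J$; so the hypothesis of Proposition~\ref{ifpart} holds for $({\bf L}_J,\theta)$. Its conclusion is that $\mathbb{L}_J(\theta)\simeq\mathbb{L}_J(\theta_1)^{\omega_1}\otimes\cdots\otimes\mathbb{L}_J(\theta_l)^{\omega_l}$ for suitable $\theta_1,\dots,\theta_l$ in the analogue of $\mathbb{X}_1$ for ${\bf L}_J$ and distinct $\omega_1,\dots,\omega_l\in\mathbb{G}$. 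Finally, because that set is defined by the inequalities $0\le\langle\theta_k|_{{\bf T}'},\alpha_i^\vee\rangle<p$, Corollary~\ref{L+} (and the identification in its proof) shows that each factor $\mathbb{L}_J(\theta_k)$ restricts on $\mathcal{D}{\bf L}_J$ to the rational irreducible module $L(\pi(\theta_k))$ of $p$-restricted highest weight; that is, each $\mathbb{L}_J(\theta_k)$ is $p$-restricted in the sense defined above, which is exactly what is needed.

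The only point requiring real care---and the main (if modest) obstacle---is verifying that Proposition~\ref{ifpart} is legitimately applicable to ${\bf L}_J$, namely that $\mathcal{D}{\bf L}_J$ is simply connected semisimple. I would deduce this from the standing assumption on ${\bf G}$: the group ${\bf L}_J\cap\mathcal{D}{\bf G}$ is a Levi subgroup of the simply connected semisimple group $\mathcal{D}{\bf G}$, so its derived group is again simply connected semisimple, and since ${\bf L}_J=({\bf L}_J\cap\mathcal{D}{\bf G})\cdot Z({\bf L}_J)^{\circ}$ one has $\mathcal{D}{\bf L}_J=\mathcal{D}({\bf L}_J\cap\mathcal{D}{\bf G})$. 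The degenerate cases need no separate treatment: if $J=I$ the statement is Proposition~\ref{ifpart} itself, and if $J=\varnothing$ then $\mathbb{L}_J(\theta)=\theta$ is one dimensional, which is a (trivially $p$-restricted) module of the trivial group $\mathcal{D}{\bf T}$, corresponding to a single-factor twist. With that verification in hand the theorem follows by combining Corollary~\ref{unique}, Corollary~\ref{IndPG} and Proposition~\ref{ifpart}.
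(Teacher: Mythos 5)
Your proof is correct and follows essentially the same route as the paper: identify $V$ with $\mathbb{L}(\theta)$ via Corollary~\ref{unique}, apply Corollary~\ref{IndPG} to reduce to $\mathbb{L}_J(\theta)$, and then invoke Proposition~\ref{ifpart} for ${\bf L}_J$. Your explicit verification that $\mathcal{D}{\bf L}_J$ is simply connected semisimple (via ${\bf L}_J\cap\mathcal{D}{\bf G}$ being a Levi of the simply connected group $\mathcal{D}{\bf G}$) is a legitimate point that the paper leaves implicit, and your argument for it is sound.
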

In other words, {\bf the ``size" of rational $p$-restricted irreducible modules for some Levi subgroup determines the ``size" of all abstract irreducible $\Bbbk{\bf G}$-module with ${\bf B}$-stable line}.

\medskip
Finally, as a byproduct, we give a new proof of the classification result on the finite-dimensional abstract irreducible $\Bbbk{\bf G}$-modules which is a special case of \cite[Theorem 10.3]{BT}.

\begin{Cor}
Assume that ${\bf G}$ is simply connected semisimple. If $V$ is a finite-dimensional $\Bbbk{\bf G}$-module, then
$V\simeq L(\theta_1)^{\omega_1}\otimes\cdots\otimes L(\theta_l)^{\omega_l}$
for some $\theta_1,\cdots,\theta_l\in X_1$ and distinct automorphisms $\omega_1,\cdots,\omega_l\in\mathbb{G}$.
\end{Cor}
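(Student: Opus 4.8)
The plan is to reduce the statement to the classification and construction already obtained — Corollaries \ref{unique}, \ref{L+}, \ref{IndPG} and Proposition \ref{ifpart} — the only fresh input being that a finite-dimensional irreducible $\Bbbk{\bf G}$-module $V$ (irreducibility being implicit in the statement, since the right-hand side is irreducible) carries a ${\bf B}$-stable line. First I would produce a nonzero ${\bf U}$-fixed vector: for each $n$ the finite $p$-group $U_n$ acts on the nonzero finite-dimensional $\Bbbk$-space $V$, so $V^{U_n}\ne 0$ by Lemma \ref{fixedpt}; since $\dim_k V<\infty$ the descending chain $V^{U_1}\supseteq V^{U_2}\supseteq\cdots$ stabilizes, whence $V^{{\bf U}}=\bigcap_{n}V^{U_n}\ne 0$. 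As ${\bf T}$ normalizes ${\bf U}$, it acts on the nonzero finite-dimensional space $V^{{\bf U}}$, and since ${\bf T}$ is abelian and $\Bbbk$ is algebraically closed, the standard eigenspace-chasing argument for a commuting family of operators yields a common ${\bf T}$-eigenvector $v\in V^{{\bf U}}$. Because ${\bf U}=[{\bf B},{\bf B}]$ annihilates $v$, the line $\Bbbk v$ is ${\bf B}$-stable and affords some character $\theta\in\widehat{\bf T}$.

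With the ${\bf B}$-stable line in hand, Corollary \ref{unique} gives $V\simeq\mathbb{L}(\theta)$. Since $\dim_k\mathbb{L}(\theta)<\infty$, Corollary \ref{IndPG} forces $\theta|_{{\bf T}_i}\in\mathcal{X}_0$ for every $i\in I$. As ${\bf G}=\mathcal{D}{\bf G}$ is simply connected semisimple we have ${\bf T}'={\bf T}$, the restriction $\pi$ is the identity, and $\mathbb{X}_1=X_1$; Proposition \ref{ifpart} (applied with $J=I$) then gives $\mathbb{L}(\theta)\simeq\mathbb{L}(\theta_1)^{\omega_1}\otimes\cdots\otimes\mathbb{L}(\theta_l)^{\omega_l}$ for some $\theta_1,\dots,\theta_l\in X_1$ and distinct $\omega_1,\dots,\omega_l\in\mathbb{G}$, while Corollary \ref{L+} (again with $\pi=\operatorname{id}$) identifies each $\mathbb{L}(\theta_i)$ with the rational irreducible module $L(\theta_i)$ of $p$-restricted highest weight $\theta_i$. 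Putting these together yields $V\simeq L(\theta_1)^{\omega_1}\otimes\cdots\otimes L(\theta_l)^{\omega_l}$, as claimed.

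Everything after the first paragraph is a direct citation, and the three ingredients of that paragraph — Serre's fixed-point lemma, stabilization of a descending chain in finite dimension, and the existence of a simultaneous eigenvector for a commuting family over $\bar{\mathbb{F}}_p$ — are all routine. The one genuinely substantive move is thus the very first: converting the hypothesis $\dim_k V<\infty$ into the existence of a one-dimensional $\Bbbk{\bf B}$-submodule of $V$, which is precisely the hook that lets the results of Sections 3 and 4 take over. I anticipate no real obstacle here; the only point requiring a little care is the bookkeeping ensuring that, for ${\bf G}$ simply connected semisimple, $\pi$ and the identification $\mathbb{X}_1=X_1$ make $\mathbb{L}(\theta_i)$ coincide with the rational module $L(\theta_i)$.
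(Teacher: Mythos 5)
Your proof is correct, and the second paragraph coincides with the paper's (reduction to Corollary \ref{unique}, Corollary \ref{IndPG}, Proposition \ref{ifpart}, and the observation that $\pi=\operatorname{id}$ and $\mathcal{D}{\bf G}={\bf G}$ when ${\bf G}$ is simply connected semisimple). Where you genuinely diverge is the first step, the production of a ${\bf B}$-stable line. The paper works projectively: it notes that each $\mathbb{P}(V)^{B_n}$ is a nonempty closed subvariety of the noetherian variety $\mathbb{P}(V)$, so the descending chain $\mathbb{P}(V)^{B_1}\supseteq\mathbb{P}(V)^{B_2}\supseteq\cdots$ stabilizes and $\mathbb{P}(V)^{\bf B}$ is nonempty. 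You instead split ${\bf B}={\bf T}\ltimes{\bf U}$: the descending chain of linear subspaces $V^{U_1}\supseteq V^{U_2}\supseteq\cdots$ stabilizes by finite dimensionality, each term being nonzero by Serre's fixed-point lemma, giving $V^{\bf U}\ne 0$; then a common ${\bf T}$-eigenvector is extracted from $V^{\bf U}$ by the usual commuting-operators argument over $\bar{\mathbb{F}}_p$. Both are chain-stabilization arguments, but yours is purely linear-algebraic and avoids the variety-theoretic packaging; in exchange, the paper's formulation subsumes the ${\bf T}$-eigenvector step into the nonemptiness of $\mathbb{P}(V)^{B_n}$, which it leaves as "clearly" — your version makes that implicit content explicit. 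Your remark that irreducibility of $V$ must be read into the hypothesis is also apt, and the paper uses the same tacit assumption when it passes from "${\bf B}$-stable line" to "$V$ is a quotient of $\mathbb{M}(\theta)$."
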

\begin{proof}
Let $\mathbb{P}(V)$ be the projective space associated to $V$. Clearly, $\mathbb{P}(V)^{B_n}\ne\varnothing$ and $\mathbb{P}(V)^{B_n}\supset\mathbb{P}(V)^{B_{n+1}}$ for any $n$. All $\mathbb{P}(V)^{B_n}$ are closed in $\mathbb{P}(V)$. It follows that $\mathbb{P}(V)^{\bf B}=\bigcap_n\mathbb{P}(V)^{B_n}\ne\varnothing$ since the chain $\cdots\supset\mathbb{P}(V)^{B_n}\supset\mathbb{P}(V)^{B_{n+1}}\supset\cdots$ is stable ($\mathbb{P}(V)$ is noetherian). Thus, $V$ contains a ${\bf B}$-stable line and hence is a quotient of $\mathbb{M}(\theta)$ for some $\theta\in\widehat{\bf T}$.  So Corollary \ref{IndPG} implies that $\theta|_{{\bf T}_i}\in\mathcal{X}_0$ for any $i\in I$. Combining with Proposition \ref{ifpart} completes the proof.
\end{proof}

\bigskip

\bibliographystyle{amsplain}

\end{document}